\begin{document}

\title{Pattern avoidance and $K$-orbit closures}
\author{William M. McGovern}

\maketitle
\bigskip
\begin{abstract}
We extend the characterization of smooth and rationally smooth classical Schubert varieties by pattern avoidance to classical symmetric varieties.  We also parametrize these varieties combinatorially and show how to compute the partial order by containment of closures on them.
\end{abstract}
\section{Introduction}
\newtheorem{remark}{Remark}[section]
\newtheorem{theorem}{Theorem}[section]
\newtheorem{definition}{Definition}[section]
\newtheorem{example}{Example}[section]
\newtheorem{conjecture}{Conjecture}[section]
\newtheorem{lemma}{Lemma}[section]
\newtheorem{proposition}{Proposition}[section]
\bigskip
Let $G$ be a complex semisimple Lie group and $B$ a Borel subgroup.  The quotient space $G/B$ is called the {\bf flag variety} or {\bf flag manifold} of $G$; it is well known that this space can be identified with the set $\mathcal B$ of Borel subalgebras of the Lie algebra $\mathfrak g$ of $G$, any two such subalgebras being conjugate under $G$.  The set $B\backslash G/B$ of $B$-orbits in $G/B$ is well known to be finite; it is parametrized by the Weyl group $W$ of $G$.  The $B$-orbits $C_w=BwB$ in the flag variety are called {\bf Schubert cells} and their  
closures $X_w=\overline{C_w}$ (in either the Zariski or the Euclidean topology) are called {\bf Schubert varieties}.  Schubert varieties can have very complicated singularities but by now these singularities are fairly well understood (see e.g. \cite{BL00,AB14}).  In particular, if $G$ is of classical type then it is well known that the elements of $W$ may be regarded as permutations or signed permutations.  Both of these have of course been extensively studied by combinatorists and so one could hope for a combinatorial characterization of the $w\in W$ for which $X_w$ is singular, or rationally singular (that is, has nonvanishing relative cohomology in more than one degree at some point).  That hope has been abundantly fulfilled; in this context the notions of pattern inclusion and avoidance have proven quite useful.  Writing a typical permutation $w$ in the symmetric group $S_n$ on $n$ letters in one-line notation as $[w_1\ldots w_n]$, where $w_i=w(i)$.  Recall that this permutation is said to {\bf include} the pattern $v=[b_1\ldots b_m]$ (another permutation in one-line notation) if there are indices $1\le i_1<\ldots<i_m\le n$ such that $w({i_j})<w({i_k})$ if and only if $b_j<b_k$.   We say that $w$ {\bf avoids} $v$ if it does not include the latter.  If instead $w,v$ are signed permutations (lying in the hyperoctahedral group $H_n$ on $n$ letters), then the condition for $w$ to include $v$ is slightly more complicated; see \cite[Def.\ 8.3.15]{BL00}.  We will see in this article that somewhat different notions of inclusion and avoidance arise for the varieties discussed here.  One typically characterizes the permutations or signed permutations corresponding to smooth, or to rationally smooth, Schubert varieties as those avoiding all patterns in a certain list; this was first done for Schubert varieties in type $A$ in \cite{LS90}, improving on an earlier and independent classification in \cite{W89} that did not use pattern avoidance.  In this case there are just two bad patterns, namely $3412$ and $4231$.  In the other classical types there is a longer list of bad patterns corresponding to singular Schubert varieties (see Chapter 13 of \cite{BL00}), but in all cases the bad patterns involve at most four indices.

A similar situation arises when one lets $K$ be a symmetric subgroup of $G$, that is, the fixed points of an automorphism $\iota$ of $G$ of order 2, and looks at the orbits of $K$ rather than $B$ on $G/B$.  Such orbits are called {\bf symmetric} and their closures are called {\bf symmetric varieties}.  It is well known that there are only finitely many symmetric orbits; they play a crucial role in the representation theory of the real form of $G$ corresponding to $K$ analogous to that played by the $B$-orbits in the theory of highest weight modules for the Lie algebra of $G$ \cite{BB81,LV83,V83,BoB85}.  This chapter is devoted to such orbits and the singularities of their closures in the classical cases; they are of intrinsic interest apart from their connection to representation theory.

In more detail, this chapter is divided into this introduction plus four other sections.  In the first of these we classify the symmetric orbits for all classical groups $G$, running through the list of symmetric subgroups $K$ up to automorphism for each such $G$ and parametrizing $K$-orbits in $G/B$ in each case by involutions in symmetric groups, sometimes with extra structure.  In the next one we work out the partial order on varieties by containment of closures in each case, sometimes by relating this order to Bruhat-Chevalley order on permutations (regarded as elements of a  Weyl group of type $A$), and sometimes by giving combinatorial \lq\lq moves" generating this order.  In all cases the poset of varieties is ranked by the shifted dimension of the variety; we give an explicit formula for this dimension in each case.  Section 4 is the heart of the chapter. In it we give pattern avoidance criteria for classical symmetric varieties to be smooth or rationally smooth, very similar in spirit to the ones given by Billey, Lakshmibai, and others for classical Schubert varieties (see \cite{BL00}).  These are proved by techniques due to Carrell, Peterson, Brion, Kumar, and others, similar to those used in the Schubert case, but a number of new ideas and complications arise.  In the last section we broaden our horizons, considering 
the properties of being a local complete intersection or Gorenstein in addition to smoothness and rational smoothness in one family of cases.  Here an unexpected connection to Richardson varieties (intersections of Schubert and opposite Schubert varieties) permits the explicit computation of Kazhdan-Lustig-Vogan polynomials, though Gorensteinness does not in general seem to be characterizable by pattern avoidance.

\section{Symmetric orbits in classical flag varieties}
\bigskip
We begin by running through the well-known list of symmetric subgroups $K$ arising for each classical simple group $G$ (up to conjugacy in $G$) and parametrizing the $K$-orbits in $G/B$ in each case. These orbits were first studied for general $G$ by Matsuki, Springer and others.  Later work specifically in the classical cases was done by Matuski-Oshima \cite{MO88} and Yamamoto \cite{Y97,Y97'}. 

\subsection{The general linear group}
Take first $G=GL(n,\mathbb C)$; it turns out to be more convenient to work with this reductive group rather than its simple subgroup $SL(n,\mathbb C)$.  For any positive integers $p,q$ with $p+q=n$ let $\iota=\iota_{p,q}$ be conjugation by a diagonal matrix with $p$ eigenvalues $1$ and $q$ eigenvalues $-1$, so that $K\cong GL(p,\mathbb C)\times GL(q,\mathbb C)$.  This case is labelled {\bf type $AIII$} in the literature and corresponds to the real form $U(p,q)$ of $G$, the group of isometries of a Hermitian form of signature $(p,q)$ on $\mathbb C^{p+q}$ (see \cite[Ch.\ X]{He78}). 

\begin{definition}
A {\bf clan} of signature $(p,q)$ is a sequence $(c_1,\ldots,c_{p+q})$ of $p+q$ symbols such that each $c_i$ is either $+$ or $-$ or a natural number, such that every natural number occurs either exactly twice or not at all among the $c_i$  and the number of distinct natural numbers and $+$ signs among the $c_i$ is $p$.  Two clans are identified whenever they have the same signs in the same positions and pairs of equal numbers in the same positions (so that for example $(1+1-)$ is identified with $(2+2-)$ but not with $(1+-1)$).
\end{definition}

Thus for example, the clans of signature $(2,1)$ are exactly $(1+1),(+11),(11+),\break (++-),(+-+)$, and $(-++)$.

\begin{theorem}[{\cite{MO88,Y97}}]
The $GL(p,\mathbb C)\times GL(q,\mathbb C)$-orbits in $GL(p+q,\mathbb C)/B$ are parametrized by clans of signature $(p,q)$.
\end{theorem}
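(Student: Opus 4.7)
The plan is to identify $G/B$ with the variety of complete flags $F_\bullet = (0 = F_0 \subset F_1 \subset \cdots \subset F_n = \mathbb{C}^n)$, on which $K = GL(V^+) \times GL(V^-)$ acts by preserving the $\iota$-eigenspace decomposition $\mathbb{C}^n = V^+ \oplus V^-$ with $\dim V^\pm = p,q$. I will construct mutually inverse maps between $K$-orbits of such flags and clans of signature $(p,q)$.

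In one direction I would attach a clan to a flag $F_\bullet$ by tracking how the flag interacts with the two projections $\pi^+,\pi^-\colon\mathbb{C}^n\to V^\pm$. Let $a_i=\dim\pi^+(F_i)$ and $b_i=\dim\pi^-(F_i)$. The pair of increments $(a_i-a_{i-1},\,b_i-b_{i-1})$ lies in $\{(1,0),(0,1),(1,1),(0,0)\}$; set $c_i=+$ in the first case, $c_i=-$ in the second, and in the last two cases assign natural numbers. The $(1,1)$ positions are ``mixed,'' where a new direction in $V^+$ and in $V^-$ first appear simultaneously in the projected flag, and the $(0,0)$ positions are ``resolutions,'' where a previous mixed direction becomes splittable in $F_j$ as $v+w$ with $v\in F_j\cap V^+$ and $w\in F_j\cap V^-$. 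The pairing of mixed with resolved indices is dictated by the refined rank invariants $\rho(i,j):=\dim\bigl(F_i\cap(F_j\cap V^+ + F_j\cap V^-)\bigr)$: the mixed index $i$ is paired with the smallest $j$ at which $\rho(i,j)$ jumps to include that particular direction. One checks from the definition that the resulting sequence is indeed a clan, with signature $(p,q)$ coming from a direct count of $+$'s, $-$'s, and matched pairs against $p+q=n$, $a_n=p$, $b_n=q$. Manifest $K$-invariance follows from the fact that $V^\pm$, $\pi^\pm$, and the subspaces $F_i\cap V^\pm$ are all preserved (up to $K$-automorphism) by the action.

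In the opposite direction, for each clan I would produce an explicit model flag by choosing an ordered set of vectors $e_1,\ldots,e_n$: a standard basis vector of $V^+$ at each $+$-position, a standard basis vector of $V^-$ at each $-$-position, and for each pair $i<j$ bearing a common natural number a pair $v+w$, $v-w$ with $v\in V^+$, $w\in V^-$ placed at positions $i$ and $j$; setting $F_i=\langle e_1,\ldots,e_i\rangle$ gives a flag whose attached clan is the given one. The heart of the proof, and the main obstacle, is then injectivity: two flags with the same clan must lie in a single $K$-orbit. I would argue this by induction on $n$, bringing a second flag $F_\bullet'$ onto the model flag one step at a time by applying elements of the $K$-stabilizer of the already-constructed initial segment. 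The subtle point is the matched-pair case, where the element of $K$ used at step $i$ to normalize the mixed direction must be chosen compatibly with the later step $j$ at which this direction is resolved; the rank data $\rho(i,j)$ is exactly what controls the freedom available, and verifying that every conceivable obstruction reduces to the information recorded in the clan is the main combinatorial content. The parametrizations of Matsuki--Oshima \cite{MO88} and Yamamoto \cite{Y97} provide the prototype; reformulating their arguments in the language of clans makes the inductive step essentially mechanical once the rank invariants are in place.
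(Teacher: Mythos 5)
The paper itself gives no proof of this theorem: it cites Matsuki--Oshima and Yamamoto and then merely records the correspondence (a flag lies in $\mathcal O_c$ when $\dim(V_i\cap P)$ and $\dim(V_i\cap Q)$ match the sign/pair counts of $c$). Your outline follows the same standard route those sources take -- a complete list of $K$-invariant rank data, an explicit representative flag for each clan, and a transitivity argument -- and your insistence on a third, pairing invariant is correct and in fact necessary: the two conditions the paper states do not separate, e.g., $(1212)$ from $(1221)$. However, your pairing rule is not well-defined as written. Take $V^+=\langle e_1,e_2\rangle$, $V^-=\langle e_3,e_4\rangle$, $F_1=\langle e_1+e_3\rangle$, $F_2=\langle e_1+e_3,e_2+e_4\rangle$, $F_3=\langle e_1,e_3,e_2+e_4\rangle$, $F_4=\mathbb C^4$ (clan $(1212)$). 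Then $\rho(2,j)=\dim\bigl(F_2\cap(F_j\cap V^+ + F_j\cap V^-)\bigr)$ already jumps at $j=3$, although the line created at step $2$ is resolved only at $j=4$; so ``the smallest $j$ at which $\rho(i,j)$ jumps'' gives the wrong partner, and the qualifier ``to include that particular direction'' is not an invariant condition. The recipe can be repaired -- e.g.\ pair $i$ with the smallest $j$ such that $F_i\subseteq F_{i-1}+(F_j\cap V^+)+(F_j\cap V^-)$ -- but you would then still have to check that this pairing is a bijection between the $(1,1)$- and $(0,0)$-positions and is constant on $K$-orbits.

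The more serious issue is the step you yourself call the heart of the proof: that two flags with the same clan lie in one $K$-orbit. You sketch an induction ``one step at a time by applying elements of the $K$-stabilizer of the already-constructed initial segment,'' concede that the matched-pair case is the main combinatorial content, and then declare it ``essentially mechanical'' by appeal to the prototypes of Matsuki--Oshima and Yamamoto -- which are exactly the references the theorem is quoting, so nothing has been proved. To close the gap you would need to show that the stabilizer in $K$ of the normalized partial flag acts transitively on the admissible choices of the next line compatibly with all later resolution data (for instance via Yamamoto-style matrix normal forms, or by a Bialynicki-Birula/dimension count comparing the number of clans with the finiteness of the orbit set), and that this transitivity is exactly detected by the invariants $\dim(F_i\cap V^\pm)$ and the corrected $\rho(i,j)$. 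As it stands the proposal is a sound roadmap, consistent with the parametrization the paper describes, but not a self-contained proof of the statement.
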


Thus there are indeed only finitely many clans of a fixed length up to the identification defined in Definition 2.1.  The parametrization is obtained as follows.  It is well known that $G/B$ may be identified with the space of complete flags $V_0\subset V_1\cdots\subset V_n$ in $\mathbb C^n$.  Fix any basis $v_1,\ldots,v_n$ of $\mathbb C^n$ and let $P$ be the span of the first $p$ vectors and $Q$ the span of the remaining vectors in this basis.  Embed the product $GL(P)\times GL(Q)$ in an obvious way as a subgroup $K$ of $G$.  Then the $K$-orbit $\mathcal O_c$ in $G/B$ corresponding to the clan $c=(c_1,\ldots,c_n)$ consists of all flags $V_0\subset\cdots\subset V_n$ such that the dimension of $V_i\cap P$ equals the number of $+$ signs and pairs of equal numbers among $c_1\ldots c_i$, while the dimension of $V_i\cap Q$ equals the number of $-$ signs and pairs of equal numbers occurring among $c_1\ldots c_i$, for all indices $i$ between 1 and $n$. In both cases we do not count numbers appearing only once among the relevant $c_i$.

For general $G$, there is a unique open $K$-orbit in any flag variety $G/B$, having dimension equal to that of $G/B$, since there are only finitely many $K$-orbits in $G/B$.  Whenever $K$-orbits are parametrized by clans, we denote the clan corresponding to the open orbit by $c_o$.

\begin{example}  The clan $(1+-1-)$ corresponds to the set of all flags $V_0,\ldots,V_5$ such that the dimensions of $V_i\cap P,V_i\cap Q$ are $0,0,1,0,1,1,2,2,2,3$, respectively, for $i=1,2,3,4,5$.  Thus the first two dimensions are 0 since there are no signs are pairs of equal numbers in the first entry of the clan, the next dimension is 1 since there is one $+$ sign in the first two entries of the clan and no pairs of equal numbers, and so on.  In general, if $p\ge q$, then the clan corresponding to the open orbit is $(1\ldots q+\ldots+q\ldots1)$, with $p-q$ plus signs (we will see in Theorem 3.5 below that the closure of this orbit contains that of all the others, so is the full flag variety). 
\end{example}

Next we consider the case where $\iota$ sends a matrix $M$ to its inverse transpose.  Then $K=O(n,\mathbb C)$, the isometry group of the usual dot product, a symmetric bilinear form on $\mathbb C^n$.  This case corresponds to the real form $GL(n,\mathbb R)$ of $G$ and has the label {\bf type $AI$} in the literature.  

\begin{theorem}[{\cite[Example 10.2]{RS90}}]
For $G=GL(n,\mathbb C), O(n,\mathbb C)$-orbits on $G/B$ are parametrized by the set $I_n$ of involutions in the symmetric group $S_n$ on $n$ letters.  The $O(n,\mathbb C$-orbit $\mathcal O_\pi$ corresponding to an involution $\pi=[\pi_1\ldots\pi_n]$ consists of all flags $V_0\subset\cdots\subset V_n$ such that the rank of the dot product on the Cartesian product $V_j\times V_k$ equals the number of pairs $\ell\le j$ with $\pi_\ell\le k$, for all indices $j,k$ between 1 and $n$. 
\end{theorem}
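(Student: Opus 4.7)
The proof naturally splits into three parts: (i) the rank table $r_{jk}(V_\bullet) := \mathrm{rank}(\cdot|_{V_j \times V_k})$ is an $O(n,\mathbb{C})$-invariant of the flag; (ii) flags with the same rank table lie in a single $K$-orbit; and (iii) the rank tables arising from complete flags are in explicit bijection with $I_n$ via the stated formula. Part (i) is immediate: $O(n,\mathbb{C})$ preserves the dot product by definition, hence preserves every restricted form and its rank.

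For (iii), I would first realize each involution concretely. Given $\pi \in I_n$, pick a basis $e_1, \ldots, e_n$ of $\mathbb{C}^n$ \emph{adapted} to $\pi$: when $\pi(i) = i$, take $e_i$ non-isotropic and orthogonal to the other chosen vectors; when $\pi(i) = j \ne i$ with $i < j$, take $e_i, e_j$ a hyperbolic pair (isotropic with $e_i \cdot e_j = 1$), orthogonal to the rest. The resulting symmetric form is non-degenerate, so after a global change of coordinates it coincides with the standard dot product. Setting $V_j := \mathrm{span}(e_1, \ldots, e_j)$, a direct count shows $r_{jk} = \#\{\ell \le j : \pi(\ell) \le k\}$. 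The converse---that every rank table of a flag comes from a \emph{unique} involution in this way---follows by induction on $j$: the increments $r_{j,k} - r_{j-1,k}$ as a function of $k$ have a tightly constrained shape (a single jump at $k = \pi(j)$ when $\pi(j) \ge j$, or a shift determined by the previously recorded partner when $\pi(j) < j$), and this pattern recovers $\pi$ uniquely.

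The main obstacle is (ii): showing that two flags with identical rank tables are $K$-conjugate. The natural tool is \textbf{Witt's extension theorem} for non-degenerate symmetric bilinear forms, which asserts that any linear isomorphism between subspaces of $\mathbb{C}^n$ preserving the restricted form extends to a global isometry. Given $V_\bullet$ and $V'_\bullet$ with matching rank data, I would inductively produce bases $v_1, \ldots, v_j$ of $V_j$ and $v'_1, \ldots, v'_j$ of $V'_j$ whose Gram matrices agree. The inductive step is the delicate part: at stage $j+1$, one must pick $v_{j+1} \in V_{j+1} \setminus V_j$ (respectively $v'_{j+1} \in V'_{j+1} \setminus V'_j$) whose inner products with the previously chosen vectors match a prescribed list, and the agreement of rank data at stage $j+1$ is precisely what guarantees the required freedom (the relevant linear system on $V_{j+1}$ and on $V'_{j+1}$ has solutions of the same codimension). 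Once the Gram matrices are matched, Witt's theorem produces the required element of $O(n,\mathbb{C})$ sending $v_i \mapsto v'_i$, hence $V_\bullet \mapsto V'_\bullet$. With (i)--(iii) in hand, the orbits are precisely the level sets of the rank table, parametrized by $I_n$.
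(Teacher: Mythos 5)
The paper offers no argument for this statement at all---it is quoted from \cite[Example 10.2]{RS90}---so your proposal is a genuine proof attempt rather than a rival to an in-paper proof. Parts (i) and (iii) of your plan are fine: the adapted-basis computation does give the stated rank table, and the converse recovery of $\pi$ works once you also invoke the symmetry $r_{jk}=r_{kj}$ of the table (coming from symmetry of the form) to see that the jump pattern is an involution and not just a permutation. The genuine gap is in (ii), exactly at the step you call delicate. Your inductive claim is that if $v_1,\ldots,v_j$ and $v'_1,\ldots,v'_j$ are flag-adapted with equal Gram matrices, then agreement of the rank data at stage $j+1$ guarantees matching choices of $v_{j+1},v'_{j+1}$. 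This is false: the admissible choice at an earlier stage can depend on later subspaces of the flag, so a greedy forward induction can get stuck. Concretely, take $n=4$, $\pi=[3412]$, and a basis $v_1,\ldots,v_4$ with $v_1\cdot v_3=v_2\cdot v_4=1$ and all other products zero; let $V_\bullet$ be the standard flag $V_j=\langle v_1,\ldots,v_j\rangle$ and let $V'_\bullet$ agree with it except that $V'_3=\langle v_1,v_2,v_3+v_4\rangle$. Both flags have the rank table of $\pi$ (e.g.\ the Gram matrix of $(v_1,v_2,v_3+v_4)$ is $\begin{pmatrix}0&0&1\\0&0&1\\1&1&0\end{pmatrix}$, of rank $2=r_{33}$). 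Now choose $v'_1=v_1$, $v'_2=v_2$, a perfectly legitimate stage-$2$ match (both Gram matrices are zero). At stage $3$ you need $v'_3\in V'_3$ with $v'_3\cdot v'_1=1$ and $v'_3\cdot v'_2=0$, but every $w=av_1+bv_2+c(v_3+v_4)\in V'_3$ has $w\cdot v_1=w\cdot v_2=c$, so the required pairing vector $(1,0)$ is unattainable: the solution sets do \emph{not} have ``the same codimension description''---they are the lines $\{(\mu,0)\}$ and $\{(\lambda,\lambda)\}$. The two flags are nevertheless in the same $O(4,\mathbb C)$-orbit (take $v'_2=v_2-v_1$ instead and the matching succeeds), which shows the failure is in the inductive scheme, not in the theorem.

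So the argument needs a stronger mechanism than ``equal rank data at each stage.'' One repair is to prove a normal-form statement: every flag admits an adapted basis whose Gram matrix is the symmetric permutation matrix of the involution read off from its rank table; but as the example shows, producing such a basis cannot be done by a one-pass greedy choice, and is most cleanly handled by translating the problem into the action of the Borel $B$ by congruence $g\cdot S=gSg^{t}$ on nondegenerate symmetric matrices (using $GL(n,\mathbb C)/O(n,\mathbb C)\cong\{\text{nondegenerate symmetric }S\}$), where a Gaussian-elimination argument reduces $S$ to a unique symmetric permutation matrix; the rank table is then visibly the complete invariant. Alternatively one can simply invoke \cite[Example 10.2]{RS90}, as the paper does. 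Minor additional remarks: once full bases with equal Gram matrices are found, the linear map $v_i\mapsto v'_i$ is already an isometry, so Witt's extension theorem is not needed at the last step (it would be needed only for partial bases); and your uniqueness argument in (iii) should state explicitly that it uses $r_{jk}=r_{kj}$.
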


\begin{example}
The involution corresponding to the open orbit has one-line notation $[1\ldots n]$.  We will see in Theorem 3.6 below that the partial order on orbits given by containment of their closures corresponds to the reverse Bruhat order on involutions, so that the identity involution, lying at the bottom of the poset of involutions with the usual Bruhat order, lies at the top of this poset with reverse Bruhat order.
\end{example}

The last type (labelled $AII$) arising for $GL(n,\mathbb C)$ corresponds to the involution $\iota$ sending a $2m\times 2m$ matrix $M$ to $J^{-1}(M^{-1})^tJ=-J(M^{-1})^tJ$, where $J$ is the block matrix $\begin{pmatrix} 0&I\\ -I&0\end{pmatrix}$, where $I$ is the $m\times m$ identity matrix.  The real form of $GL(2m,\mathbb C)$ corresponding to this type is $U^*(2m)$ \cite[Ch.\ X]{He78}, which may be identified with the group of $m\times m$ invertible matrices over the quaternions $\mathbb H$.  Then $K\cong Sp(2m,\mathbb C)$, the isometry group of nondegenerate skew-symmetric bilinear form on $\mathbb C^{2m}$.  

\begin{theorem}[{\cite{CCT15}}]
$Sp(2m,\mathbb C)$-orbits on $GL(2m,\mathbb C)/B$ are parametrized by the set $I_{2m}'$ of involutions  $\pi$ in $S_{2m}$ without fixed points.  
\end{theorem}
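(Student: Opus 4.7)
The plan is to mimic the argument used in the $AI$ case (Theorem 2.3), substituting the nondegenerate skew-symmetric form $\omega$ preserved by $K=Sp(2m,\mathbb C)$ for the dot product. First I would attach to each flag $V_0\subset\cdots\subset V_{2m}$ the array of ranks $r_{j,k}(V_\bullet)=\mathrm{rank}(\omega|_{V_j\times V_k})$ for $1\le j,k\le 2m$, which are manifestly $K$-invariants. A standard computation shows that the second finite differences $r_{j,k}-r_{j-1,k}-r_{j,k-1}+r_{j-1,k-1}$ take only the values $0$ and $1$, so the array determines a $2m\times 2m$ $0/1$-matrix $M$. Skew-symmetry of $\omega$ forces $M$ to be symmetric, and the identity $\omega(v,v)=0$ forces its diagonal to vanish; hence $M$ is the permutation matrix of some fixed-point-free involution $\pi=\pi(V_\bullet)\in I'_{2m}$, giving a well-defined map from $K$-orbits to $I'_{2m}$.

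Next I would verify surjectivity by explicit construction: given $\pi\in I'_{2m}$, choose a symplectic basis $e_1,\ldots,e_{2m}$ of $\mathbb C^{2m}$ indexed so that $\omega(e_i,e_{\pi_i})=\pm 1$ and all other pairings between basis vectors vanish. Setting $V_i=\mathrm{span}(e_1,\ldots,e_i)$ produces a flag whose rank array is precisely $r_{j,k}=\#\{\ell\le j:\pi_\ell\le k\}$, matching the array attached to $\pi$ above.

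The substantive step is injectivity. Given flags $V_\bullet,W_\bullet$ with $\pi(V_\bullet)=\pi(W_\bullet)$, I would build an element $g\in Sp(2m,\mathbb C)$ sending $V_i$ to $W_i$ for every $i$ by an inductive application of Witt's extension theorem for symplectic spaces. At each step one extends the partial isometry $V_{i-1}\to W_{i-1}$ to $V_i\to W_i$; the equality of rank arrays guarantees that the radical of $\omega|_{V_i}$ and that of $\omega|_{W_i}$ have the same dimension, and that any required pairing with an earlier $V_{\pi_i}$ (when $\pi_i<i$) is consistent, so Witt's theorem provides the extension.

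The main obstacle I expect is the bookkeeping in this inductive extension: at indices $i$ with $\pi_i<i$ one is not free to pick an arbitrary vector in $V_i\setminus V_{i-1}$, since the choice must be coordinated with the image already fixed at step $\pi_i$. This is precisely where fixed-point-freeness of $\pi$ plays its essential role, since a fixed point would force a one-dimensional direction on which $\omega$ is nondegenerate, which is impossible for an alternating form. Once those constraints are handled, the classification follows in the same way as in \cite{RS90}, Example 10.2.
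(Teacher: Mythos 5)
The paper offers no proof of this statement: it is quoted with a citation to \cite{CCT15} (the parametrization itself going back to the sources cited for Theorem 2.3), so your argument can only be compared with the standard one, and it is essentially that standard argument, done correctly. Mirroring the rank description the paper gives in type $AI$, you take the rank array $r_{j,k}=\mathrm{rank}\,\omega|_{V_j\times V_k}$, observe its second differences form a symmetric permutation matrix with zero diagonal, construct a flag realizing any prescribed fixed-point-free involution, and prove completeness of the invariant by inductively building a form-preserving isomorphism of flags (equivalently, an adapted basis in which the Gram matrix depends only on $\pi$); since the induction terminates at $V_{2m}=\mathbb C^{2m}$, the resulting map already lies in $Sp(2m,\mathbb C)$, so Witt extension is really only a convenient packaging of the step-by-step matching. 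Two points deserve tightening. First, the vanishing of the diagonal does not follow quite so directly from $\omega(v,v)=0$; the clean argument is that $r_{j,j}$ and $r_{j-1,j-1}$ are even (ranks of alternating forms) while $r_{j-1,j}=r_{j,j-1}$, so the diagonal second difference is even and hence $0$. Second, your closing remark mislocates the role of fixed-point-freeness: it is not a hypothesis needed to carry out the bookkeeping in the injectivity induction, but an automatic consequence of the parity argument just described; what the induction genuinely uses is the equality of the two rank arrays, which guarantees at each step that a vector in $V_i\setminus V_{i-1}$ can be corrected modulo $V_{i-1}$ to have the same pairings with the previously chosen basis vectors as its counterpart in $W_i\setminus W_{i-1}$. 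With those adjustments your outline fills in to a complete and correct proof.
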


\begin{example}
The involution corresponding to the open orbit is\hfil\break $(2143\ldots 2m,2m-1)$; again this is the unique largest fixed-point-free involution in the reverse Bruhat order (see Theorem 3.7 below).
\end{example}

\subsection{Symplectic groups}

Turning now to the group $G=Sp(2m,\mathbb C)$ of type $C$, we attach to any pair $(p,q)$ of positive integers with $p+q=m$ the involution $\iota_{p,q}'$ given by conjugation by a diagonal matrix in $G$ with $2p$ eigenvalues 1 and $2q$ eigenvalues $-1$; here the type label is $CII$.  The corresponding real form of $G$ is $Sp(p,q)$ \cite[Ch.\ X]{He78}; it consists of all matrices in $G$ preserving a suitable Hermitian form of signature $(2p,2q)$.  Here $K\cong Sp(2p,\mathbb C)\times Sp(2q,\mathbb C)$.  Here we are fixing two subspaces $P,Q$ of $\mathbb C^{2m}$, of respective dimensions $2p,2q$, such that the restrictions of the symplectic form to $P$ and $Q$ are both nondegenerate and $P$ and $Q$ are orthogonal under this form.  

\begin{definition}
A {\bf symmetric clan} of signature $(2p,2q)$ is a clan $(c_1,\ldots,c_{2p+2q})$ of signature $(2p,2q)$ such that if $c_i$ is a sign for some $i$, then $c_{2p+2q+1-i}$ is the same sign, while if $c_i, c_j$ is a pair of equal numbers then $i+j\ne 2p+2q+1$ and $c_{2p+2q+1-i},c_{2p+2q+1-j}$ is another such pair.
\end{definition}

\noindent Note that the reverse $c^r=(c_{2p+2q},\ldots,c_1)$ of a symmetric clan $c=(c_1,\ldots,c_{2p+2q})$ of signature $(2p,2q)$ is another such clan.

\begin{theorem}[{\cite{MO88,Y97}}]
$Sp(2p,\mathbb C)\times Sp(2q,\mathbb C)$-orbits in $Sp(2p+2q,\mathbb C)/B$ are parametrized by symmetric clans of signature $(2p,2q)$. 
\end{theorem}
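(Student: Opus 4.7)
The strategy is to bootstrap from the parametrization of $GL(2p)\times GL(2q)$-orbits by clans already established in Theorem 2.2, using the standard embedding of the symplectic flag variety into the full flag variety. Concretely, realize $Sp(2m,\mathbb C)/B$ as the closed subvariety of $GL(2m,\mathbb C)/B$ consisting of those complete flags satisfying $V_{2m-i}=V_i^\perp$ for every $i$, where $\perp$ denotes the annihilator for the symplectic form. Choose $P,Q$ so that the form restricts nondegenerately to each and $P\perp Q$; then $Q=P^\perp$, and $K'=Sp(2p,\mathbb C)\times Sp(2q,\mathbb C)$ equals $(GL(P)\times GL(Q))\cap Sp(2m,\mathbb C)$.

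First I would intersect each $GL$-orbit $\mathcal O_c$ from Theorem 2.2 with $Sp(2m,\mathbb C)/B$ and determine the clans $c$ for which the intersection is nonempty. The key computation is the identity
\[
\dim(V_i^\perp\cap P)=2p-i+\dim(V_i\cap Q),
\]
together with its mirror obtained by swapping $P$ and $Q$; both follow from $(V_i\cap Q)^\perp=V_i^\perp+P$ and the inclusion--exclusion formula for dimensions of sums. Applying these to $V_{2m-i}=V_i^\perp$ and comparing with the counting recipe of Theorem 2.2 (numbers of $+$'s plus matched pairs, and of $-$'s plus matched pairs, in $c_1\ldots c_i$) forces the sign pattern of $c$ to be palindromic and its matched pairs to occur in mirror doublets that never straddle the center. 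These constraints are precisely the conditions of Definition 2.3. Conversely, any symmetric clan admits an explicit symplectic flag realizing the prescribed dimension data, built by hand from an adapted symplectic basis.

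It remains to show that a nonempty intersection $\mathcal O_c\cap Sp(2m,\mathbb C)/B$ forms a single $K'$-orbit, and that every $K'$-orbit arises in this way. The second point is automatic, since any symplectic flag lies in some $\mathcal O_c$ and the analysis above then forces $c$ to be symmetric. For the first, given two flags in the intersection, Theorem 2.2 supplies an element of $K=GL(P)\times GL(Q)$ carrying one to the other; the task is to replace this element by one in $K'$. I would proceed by induction up the flag, exploiting the transitivity of $Sp(2p)\times Sp(2q)$ on configurations consisting of a subspace of $P$, a subspace of $Q$, and a coupling subspace in $P\oplus Q$ of prescribed dimensions and prescribed radicals for the symplectic form. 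The main obstacle is exactly this last step: upgrading a linear conjugacy to a symplectic one requires careful bookkeeping of the radicals of the form restricted to $V_i\cap P$, $V_i\cap Q$, and $V_i$, and of how these radicals sit inside one another. This is precisely the data encoded by a symmetric clan, so the combinatorics of Definition 2.3 guides the verification throughout.
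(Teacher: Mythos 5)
The paper does not actually prove this statement—it is quoted from Matsuki--Oshima and Yamamoto, with only the flag-theoretic description of the orbits recalled afterwards—so your proposal has to be judged on its own terms. Your overall route (intersect the $GL(2p,\mathbb C)\times GL(2q,\mathbb C)$-orbits of Theorem 2.2 with the locus $V_{2m-i}=V_i^\perp$, characterize the nonempty intersections, then show each is a single $Sp(2p,\mathbb C)\times Sp(2q,\mathbb C)$-orbit) is the standard and correct one, but the first step has a genuine gap. The identity $\dim(V_i^\perp\cap P)=2p-i+\dim(V_i\cap Q)$ is correct, yet it constrains only the counting functions $\dim(V_i\cap P)$, $\dim(V_i\cap Q)$, i.e.\ the positions of the signs and the pattern of first/second occurrences of numbers; it carries no information about \emph{which} first occurrence is matched with which second occurrence. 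What it forces is that the mirror of a sign is the same sign and the mirror of a first occurrence is a second occurrence--nothing more. For example, the clans $(1212)$ and $(1221)$ of signature $(2,2)$ have identical counting functions, so $(1221)$ satisfies every constraint you derive, yet it is not a symmetric clan (its pair sits in positions $i,j$ with $i+j=2m+1$). Hence ``comparing with the counting recipe of Theorem 2.2'' cannot force the matched pairs into mirror doublets nor exclude centrally placed pairs. To see that $\mathcal O_{(1221)}\cap Sp(4,\mathbb C)/B=\emptyset$ one needs the finer invariants that separate clans with equal counting data (the coupling numbers $c(i;j)$ of Yamamoto and Wyser, which this paper only introduces in Theorem 3.4) together with a genuinely symplectic computation: if $V_1=\langle p_0+q_0\rangle$ with $0\ne p_0\in P$, $0\ne q_0\in Q$, then $V_1^\perp\cap P=\langle p_0\rangle$ and $V_1^\perp\cap Q=\langle q_0\rangle$, so $V_1\subseteq(V_1^\perp\cap P)+(V_1^\perp\cap Q)$ automatically, which is the $(1212)$ configuration and never the $(1221)$ one. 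Your argument never invokes these invariants, so the claimed characterization of nonempty intersections is not established. (Relatedly, your phrase ``mirror doublets that never straddle the center'' misstates Definition 2.3: pairs may straddle the center, as in $(1212)$; what is forbidden is a pair occupying the mirror positions $i$ and $2m+1-i$.)

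The second step—upgrading the element of $GL(P)\times GL(Q)$ provided by Theorem 2.2 to one of $Sp(2p,\mathbb C)\times Sp(2q,\mathbb C)$—is the technical heart of the theorem, and in your write-up it remains a plan (``induction up the flag, bookkeeping of radicals'') rather than an argument; this Witt-type extension is exactly what the cited sources carry out via explicit normal-form flags built from adapted symplectic bases. So the proposal is a reasonable outline of the known proof strategy, but both the nonemptiness criterion and the transitivity statement need the full orbit invariants and the normal-form construction before they can be considered proved.
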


Recall that the flag variety of $G$ may be identified with the set of isotropic flags $V_0\subset\cdots\subset V_m$ in $\mathbb C^{2m}$, so that the dimension $v_i$ is $i$ and each $V_i$ is isotropic with respect to the form.  Any such flag extends canonically to a complete flag $V_0\subset\ldots\subset V_{2m}$ of $\mathbb C^{2m}$ if we let $V_{2m-j}$ be the orthogonal $V_j^\perp$ of $V_j$ under the form for $1\le j\le m$.  Then the flags lying in a specified clan $(c_1\ldots c_{2p+2q})$ are those whose intersections $V_i\cap P,V_i\cap Q$ satisfy the same conditions as in the paragraph before Example 2.1.

\begin{example}
There are just four symmetric clans of signature $(2,2)$, namely $(1212),(1122),(+--)$, and $(-++-)$.  In general, the clan corresponding to the open orbit is the same as in type $AIII$.
\end{example} 

The other possibility for $\iota$ with this group $G$ is conjugation by the block matrix $\begin{pmatrix}I&0\\ 0&-I\end{pmatrix}$, where the skew-symmetric form $(\cdot,\cdot)$ is taken to correspond to the $2m\times 2m$ matrix $\begin{pmatrix} 0&I\\-I&0\end{pmatrix}$. The corresponding real form of $G$ is $Sp(2m,\mathbb R)$ and the type label is $CI$.  Here $K\cong GL(m,\mathbb C)$.

\begin{definition}
A {\bf skew-symmetric clan of length $2m$} is a clan $(c_1,\ldots,c_{2m})$ of signature $(m,m)$ such that if $c_i$ is a sign, then $c_{2m+1-i}$ is the opposite sign and if $c_i,c_j$ is a pair of equal numbers, then $c_{2m+1-i},c_{2m+1-j}$ is another such pair.  Here there is no requirement that $i+j\ne 2m+1$ if $c_i,c_j$ are a pair of equal numbers.
\end{definition}

\begin{theorem}[{\cite{MO88,Y97,Y97'}}]
$GL(m,\mathbb C)$-orbits in $Sp(2m,\mathbb C)/B$ are parametrized by skew-symmetric clans $c=(c_1\ldots c_{2m})$ of length $2m$.
\end{theorem}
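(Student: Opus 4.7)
The plan is to reduce to Theorem 2.1 (the type $AIII$ parametrization) by extending each isotropic flag $V_0\subset\cdots\subset V_m$ to a complete flag in $\mathbb{C}^{2m}$ via $V_{2m-j}=V_j^\perp$, which embeds $Sp(2m,\mathbb{C})/B$ into $GL(2m,\mathbb{C})/B'$. With the symplectic form chosen as in the paragraph before Definition 2.3, the subspaces $P$ and $Q$ (the spans of the first and last $m$ standard basis vectors) are both Lagrangian. The subgroup $K$ embeds in $Sp(2m,\mathbb{C})$ as block diagonal matrices $\mathrm{diag}(A,(A^t)^{-1})$, sitting inside $GL(P)\times GL(Q)$ and acting on $P$ by $A$ and on $Q$ by its contragredient. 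Thus Theorem 2.1 assigns to each $K$-orbit a clan $c$ of signature $(m,m)$ encoding $f(i)=\dim(V_i\cap P)$ and $g(i)=\dim(V_i\cap Q)$.

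Next I would extract the constraints on $c$ imposed by isotropy. Since $P^\perp=P$ and $Q^\perp=Q$,
\[
V_{2m-j}\cap P=V_j^\perp\cap P^\perp=(V_j+P)^\perp,
\]
which has dimension $m-j+f(j)$; hence $f(2m-j)=m-j+f(j)$, and analogously for $g$. Writing $\Delta f(i)=f(i)-f(i-1)$, these identities rearrange to $\Delta f(2m+1-i)+\Delta f(i)=1$, and likewise for $\Delta g$. A direct case check on the three possibilities for $c_i$ then shows that the reflection $i\mapsto 2m+1-i$ sends a $+$-position to a $-$-position (and vice versa) and sends a number-position to another number-position, with the first/second-occurrence role interchanged. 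The position-pairing of equal numbers in $c$ is therefore carried by this reflection to itself, which is exactly the defining condition of skew-symmetry.

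Finally, for bijectivity I would show that every skew-symmetric clan arises from some $K$-orbit and that the clan determines the $K$-orbit (not merely the ambient $GL(P)\times GL(Q)$-orbit). Surjectivity follows from an explicit construction: given a skew-symmetric $c$, build an isotropic flag by placing basis vectors of $P$ at the $+$-positions, basis vectors of $Q$ at the reflected $-$-positions, and at each number-pair insert a matched pair built from one $P$-basis vector and one $Q$-basis vector, with the skew-symmetry of $c$ forcing the result to be isotropic. For injectivity, I would show that two isotropic flags lying in the same $GL(P)\times GL(Q)$-orbit are automatically $K$-conjugate: any $(A,B)\in GL(P)\times GL(Q)$ carrying one to the other must preserve the duality pairing between $P$ and $Q$ induced by the symplectic form, forcing $B=(A^t)^{-1}$ and hence $(A,B)$ into the diagonal image of $K$.

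The main obstacle I expect is this last step together with the bookkeeping for number pairs $\{i,j\}$ with $i+j=2m+1$. Such ``self-paired'' positions have no analogue in type $CII$ (where they are explicitly excluded in Definition 2.2), and a representative flag must be chosen so that the vectors inserted at positions $i$ and $2m+1-i$ satisfy the isotropy constraint pairwise; once these representatives are pinned down, the identification of the $GL(P)\times GL(Q)$-stabilizer of isotropy with the diagonal $GL(m,\mathbb{C})$ is a routine linear-algebra verification.
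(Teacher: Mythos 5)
The paper itself does not prove this statement---it is quoted from Matsuki--Oshima and Yamamoto, with only a description of the parametrizing map---so your proposal has to stand on its own, and as written it has two genuine gaps. First, in the necessity direction you extract constraints only from the dimensions $f(i)=\dim(V_i\cap P)$ and $g(i)=\dim(V_i\cap Q)$. These determine which positions of the clan carry $+$, $-$, a first occurrence, or a second occurrence of a number, and your $\Delta f,\Delta g$ computation correctly shows that the reflection $i\mapsto 2m+1-i$ swaps $+$ with $-$ and first occurrences with second occurrences; but it says nothing about the matching between first and second occurrences, which is part of the clan (and of the orbit invariant, via the quantities of type $c(i;j)$ used in Section 3) and is exactly what the skew-symmetry condition constrains. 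The inference ``the position-pairing is therefore carried by this reflection to itself'' is a non sequitur: the clan $(1,2,3,1,3,2)$ has first occurrences in positions $1,2,3$ and second occurrences in $4,5,6$, hence satisfies all of your $\Delta f,\Delta g$ identities, yet it is not skew-symmetric. Ruling such clans out requires using $V_{2m-j}=V_j^\perp$ together with $P^\perp=P$, $Q^\perp=Q$ on the finer invariants that record the matching, e.g.\ comparing $\dim\bigl(V_j\cap(V_i+P)\bigr)$ with the corresponding quantity at the reflected pair of indices; that step is missing.

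Second, your injectivity argument is incorrect as stated. An element $(A,B)\in GL(P)\times GL(Q)$ carrying one perp-extended isotropic flag to another need not preserve the pairing between $P$ and $Q$: the set of such elements is a coset of the (large) stabilizer of the first flag inside $GL(P)\times GL(Q)$, and that stabilizer is certainly not contained in the diagonal copy of $GL(m,\mathbb C)$ (take the two flags equal to see this). What must be shown is that this coset \emph{meets} $K$, i.e.\ that the intersection of a $GL(P)\times GL(Q)$-orbit with the locus of perp-extended flags is a single $K$-orbit; that needs either explicit normal forms for each skew-symmetric clan together with a transitivity argument, or the computations of the cited references. Your surjectivity construction is plausible in outline, but there too the self-paired numbers $c_i=c_{2m+1-i}$ (allowed in type $CI$, unlike $CII$) need care: the vectors inserted at positions $i$ and $2m+1-i$ must be chosen so that the flag is isotropic \emph{and} its extension reproduces the prescribed matching, not merely the sign/first/second pattern.
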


Here we fix a maximal isotropic subspace $P$  of $\mathbb C^{2m}$, of dimension $m$ and an isotropic dual space $Q$ to $P$, so that $\dim Q=m$ and $P,Q$ are paired nondegenerately by the form.  Given an isotropic flag $V_0\subset\cdots\subset V_m$ in $\mathbb C^{2m}$, extended as above to a complete flag $V_0\subset\cdots\subset V_{2m}$ in $\mathbb C^{2m}$, the condition on the intersections $V_i\cap P,V_i\cap Q$ for this flag to lie in the orbit with a given skew-symmetric clan are the same as in the previous case (even though $P$ are $Q$ are isotropic here whereas before they were nondegenerate under the form).

\begin{example}
There are eleven skew-symmetric clans of length 4.  Of these, four involve only signs, namely $(++--),(+-+-),(-+-+)$, and $(--++)$.  Three involve only numbers:  $(1122),(1212),(1221)$.  The remaining four, $(1+-1),\break (1-+1),(+11-)$, and $(-11+)$, involve both signs and numbers.  In general, the clan $c_o$ corresponding to the open orbit is $(1,\ldots,m,m,\ldots,1)$; we will see in the next section that it is the only one whose dimension matches that of the flag variety.
\end{example}

\subsection{Orthogonal groups}

Finally we consider orthogonal groups $G=O(n,\mathbb C)$ (which are more convenient to work with than their simple counterparts $SO(n,\mathbb C)$).  As usual let $(\cdot,\cdot)$ be the ambient form (this time symmetric) of which $G$ is the isometry group.  As for $GL(n,\mathbb C)$ the first possibility (with type label $BDI$) for $\iota$ is $\iota_{p,q}\in G$, defined as in type $AIII$ above.  The corresponding real form of $G$ is $O(p,q)$, the isometry group of a symmetric form of signature $(p,q)$ on $\mathbb R^{p+q}$.  Here we have $K=O(p,\mathbb C)\times O(q,\mathbb C)$.

\begin{definition}
An {\bf orthosymmetric} clan of signature $(p,q)$ is a symmetric clan $c=(c_1,\ldots,c_{p+q})$ of signature $(p,q)$, except that we allow $c_s=c_{p+q+1-s}\in\mathbb N$ for an index $s$, while if $n=p+q=2m+1$ is odd we require that $c_{m+1}$ be a sign.
\end{definition}

\begin{theorem}[{\cite{MO88,Y97'}}]
$O(p,\mathbb C)\times O(q,\mathbb C)$-orbits on $O(p+q,\mathbb C)/B$ are parametrized by orthosymmetric clans of signature $(p,q)$. 
\end{theorem}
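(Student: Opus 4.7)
The plan is to adapt the proofs given for the previous symmetric-subgroup cases in this section, staying closest to the template for type $CII$. I would attach to each flag a clan determined by the dimension data of its intersections with $P$ and $Q$, verify that this clan is orthosymmetric and depends only on the $K$-orbit, and then argue that the assignment separates orbits.

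First, given an isotropic flag $V_0\subset\cdots\subset V_m$ in $\mathbb{C}^n$ with $n=p+q$ and $m=\lfloor n/2\rfloor$, I would extend to a complete flag by setting $V_{n-j}=V_j^\perp$. Writing $a_i=\dim(V_i\cap P)$ and $b_i=\dim(V_i\cap Q)$, I define $c_i$ by the recipe already used in types $AIII$ and $CII$: put $c_i=+$ when only $a_i$ jumps, $c_i=-$ when only $b_i$ jumps, and matched natural numbers at positions where neither dimension jumps (opening) and both dimensions jump (closing). Since $K=O(P)\times O(Q)$ preserves $P$ and $Q$ together with the form, this clan is a $K$-invariant of the flag.

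Next I would check the clan is orthosymmetric. The symmetry between positions $i$ and $n+1-i$ is forced by the identity $(V_i\cap P)^\perp = V_i^\perp + P^\perp = V_{n-i}+Q$, which uses $P^\perp=Q$ in type $BDI$; taking dimensions gives $b_{n-i} = a_i + q - i$, and the jump sequences in $a$ and $b$ inherit the symmetry required by Definition 2.4. The allowance $c_s=c_{n+1-s}\in\mathbb{N}$, absent in type $CII$, corresponds to a vector $v\in V_s\setminus V_{s-1}$ with $v\in V_s^\perp$ and $(v,v)\ne 0$; such $v$ exist for a symmetric form but not for a skew one. In the odd case $n=2m+1$, any natural number in the clan must be paired with one at a symmetric position; since the only mirror of the central index $m+1$ is itself, and every number appears exactly twice, $c_{m+1}$ is forced to be a sign. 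Conversely, every orthosymmetric clan arises from a flag built dimension by dimension from this recipe.

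The main obstacle is showing injectivity on orbits: two flags whose clans agree are $K$-conjugate. Following the method of \cite{MO88,Y97'}, I would decompose each $V_i$ as $(V_i\cap P)\oplus (V_i\cap Q) \oplus M_i$, where $M_i$ is assembled from bases adapted to the matched number pairs that straddle $P$ and $Q$, and then inductively build compatible isomorphisms between two such decompositions respecting the symmetric forms restricted to $P$ and to $Q$. Witt's extension theorem then upgrades these partial matchings to global isometries in $O(P)$ and $O(Q)$, yielding an element of $K$ carrying one flag to the other. The delicacy, relative to type $AIII$, is that compatibility with the forms on $P$ and $Q$ constrains the bases chosen at the number-pair steps, and in odd dimension one must reconcile the two connected components of $O(n,\mathbb{C})$ using the sign at the central position $m+1$. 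Granting this careful inductive construction, the clan-to-orbit assignment is the asserted bijection.
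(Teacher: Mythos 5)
First, note that the paper does not prove this statement at all: it is quoted from Matsuki--Oshima and Yamamoto \cite{MO88,Y97'}, and the text only records the resulting description of which flags lie in the orbit attached to a given clan. So your sketch must stand on its own, and it has a genuine gap at its very first step: the clan you attach to a flag is not well defined. You take $a_i=\dim(V_i\cap P)$, $b_i=\dim(V_i\cap Q)$ and declare that number pairs ``open'' where neither dimension jumps and ``close'' where both jump, but this data never determines \emph{which} opening is matched with which closing, and that matching is part of the clan. Concretely, $(1212)$ and $(1221)$ are distinct orthosymmetric clans of signature $(2,2)$ whose sequences $(a_i,b_i)$ agree for every $i$: with $P=\langle e_1,e_2\rangle$, $Q=\langle e_3,e_4\rangle$, the flags with $V_1=\langle e_1+e_3\rangle$, $V_2=\langle e_1+e_3,e_2+e_4\rangle$ and third step spanned by adding $e_1$ in one case and $e_2$ in the other lie in different orbits but have identical intersection dimensions with $P$ and $Q$. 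To pin down the matching one needs finer invariants, for instance the full array $\dim(V_i\cap\theta(V_j))$ where $\theta$ is the linear involution equal to $+1$ on $P$ and $-1$ on $Q$ (equivalently, the relative position of the flags $V_\bullet$ and $\theta(V_\bullet)$); this is what the cited classifications actually use. Until the map from flags to clans is defined by such data and shown to be constant on orbits, the subsequent injectivity and surjectivity arguments cannot get started.

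Two smaller points. Your explanation of the new allowance $c_s=c_{n+1-s}\in\mathbb N$ is wrong as stated: any $v\in V_s$ with $s\le m$ lies in an isotropic subspace, so $(v,v)=0$; the phenomenon that distinguishes the orthogonal case is rather that an isotropic vector $v=v_P+v_Q$ can have $(v_P,v_P)=-(v_Q,v_Q)\ne 0$ when the form is symmetric, which is impossible for a symplectic form. Also, the delicate disconnectedness in the Witt-extension step concerns $K=O(p,\mathbb C)\times O(q,\mathbb C)$ (compare the paper's remark that orbits split under $S(O(p,\mathbb C)\times O(q,\mathbb C))$), not ``the two connected components of $O(n,\mathbb C)$'' reconciled by the central sign; as written that part of your argument conflates $G$ with $K$ and would need to be redone.
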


As in type $AIII$ we fix subspaces $P,Q$, of respective dimensions $p,q$, that are nondegenerate and orthogonal under the form (which is now symmetric).  An element of the flag variety corresponds to a maximal flag $V_0\subset\cdots\subset V_{\lfloor (p+q)/2\rfloor}$ of isotropic subspaces under the form, extended as above to a complete flag in $\mathbb C^{p+q}$.  The criterion for a flag to lie in the orbit with a given clan is the same as for type $CII$ above.  The open orbit corresponds to the same clan as for type $AIII$ above.

\begin{remark}
Orbits corresponding to clans with at least one sign, not in the middle position $m+1$ if $p+q=2m+1$ is odd, split into two suborbits under the $S(O(p,\mathbb C)\times O(q,\mathbb C))$ action.
\end{remark}

In the last case, with type label $DIII$, we have $G=O(2n,\mathbb C)$ and we take the symmetric form $(\cdot,\cdot)$ to correspond to the matrix $\begin{pmatrix} 0&I\\I&0\end{pmatrix}$. Then $\iota$ is conjugation by $\begin{pmatrix} I&0\\0&-I\end{pmatrix}$.  The corresponding real form is $O^*(2n)$, the group of matrices preserving both $(\cdot,\cdot)$ and a suitable skew-Hermitian form.  Here $K\cong GL(n,\mathbb C)$.

\begin{definition}
An {\bf even skew-symmetric clan of length $2n$} is a skew-symmetric clan $c=(c_1,\ldots,c_{2n})$ such that if $c_i,c_j$ are a pair of equal numbers, then\hfil\break $i+j\ne 2n+1$ and, in addition, the number of + signs and pairs of equal numbers among the first $n$ entries $(c_1\ldots c_m)$ is even.
\end{definition}

\begin{theorem}[{\cite{MT09}}]
$GL(m,\mathbb C)$-orbits in $O(2m,\mathbb C)$ are parametrized by even skew-symmetric clans of length $2m$.
\end{theorem}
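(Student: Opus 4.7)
The plan is to mirror the argument used for type $CI$ (Theorem 2.4), extracting the two additional constraints of Definition 2.5 from features specific to the orthogonal structure. First I would fix a polarization $\mathbb{C}^{2m} = P \oplus Q$ into maximal isotropic subspaces paired nondegenerately by the symmetric form $(\cdot,\cdot)$ of the theorem, and realize $K \cong GL(m,\mathbb{C})$ diagonally as $g \mapsto (g,(g^t)^{-1})$ on $P \oplus Q$. Given an isotropic flag $V_0 \subset \cdots \subset V_m$, canonically extended by $V_{2m-j} = V_j^\perp$, I would assign a clan by recording $\dim(V_i \cap P)$ and $\dim(V_i \cap Q)$ together with the pairing of indices at which the same new direction of $P$ or $Q$ first enters two different $V_i$. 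Complementarity under $\perp$ forces the resulting clan to be skew-symmetric in the sense of Definition 2.3, and the same normal-form argument as in $CI$ shows that the clan is a complete invariant of the $K$-orbit.

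The prohibition on equal numbers at mirror positions $i+j=2m+1$ then follows from the symmetry (rather than skew-symmetry) of $(\cdot,\cdot)$: such a pair would produce a configuration inside $V_i$ and $V_{2m+1-i} = V_i^\perp$ that is consistent in the symplectic setting but blocked by $(v,v)\neq 0$ on a suitable test vector once the form becomes symmetric. The parity constraint is the genuinely new ingredient. It reflects the fact that $K = GL(m,\mathbb{C})$ lies in $SO(2m,\mathbb{C})$ (the determinant of the diagonal embedding is $1$), so the connected group $K$ preserves each of the two connected components of the Grassmannian of maximal isotropic $m$-planes in $\mathbb{C}^{2m}$. Parametrizing these components by $\dim(V_m \cap P) \bmod 2$ and re-expressing the latter in terms of the clan recovers exactly the parity in Definition 2.5, namely that the number of $+$ signs plus pairs of equal numbers among $c_1,\ldots,c_m$ is even for a fixed component (the other is reached by the $O/SO$ swap and carries the opposite parity, explaining why both go into the same $O$-orbit picture only once).

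Finally I would close the argument by exhibiting an explicit flag realizing each even skew-symmetric clan and verifying $K$-conjugacy of any two flags sharing the same clan, exactly parallel to the $CI$ construction but with the symmetric form taking the role of the symplectic one. The main obstacle is the parity match: tracking how the component of the maximal isotropic Grassmannian depends on the clan when pairs or signs straddle the midpoint, and verifying that the count of Definition 2.5 characterizes precisely those skew-symmetric clans arising as $K$-orbit invariants on $O(2m,\mathbb C)/B$ rather than just as invariants under a larger group action. Once this bookkeeping is settled, the remaining steps are essentially formal consequences of the $CI$ parametrization.
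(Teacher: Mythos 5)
The paper offers no proof of this statement: it is quoted from \cite{MT09}, and the surrounding text only records the set-up you describe (fix a maximal isotropic $P$ and an isotropic dual $Q$; the membership criterion is then declared to be the same as in type $CI$). So your proposal has to stand on its own. Its skeleton is the standard one, and you attribute the two new constraints to the right causes: the exclusion of pairs in mirror positions $i+j=2m+1$ does come from the isotropy constraint for a \emph{symmetric} form (already for $m=1$ there is no isotropic line projecting nontrivially to both $P$ and $Q$, which kills the clan $(11)$), and the parity condition does record which of the two $SO(2m,\mathbb C)$-families of maximal isotropic subspaces contains $V_m$, read off as $\dim(V_m\cap P)\bmod 2$; this is constant on $K$-orbits because the diagonal $GL(m,\mathbb C)$ has determinant $1$ and is connected. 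As a sanity check, length $4$ yields exactly $(++--)$, $(--++)$, $(1212)$, matching the three $GL(2,\mathbb C)$-orbits on the flag variety $\mathbb P^1\times\mathbb P^1$ of $SO(4,\mathbb C)$.

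The gap is that the heart of the theorem is not carried out. The actual content is that the clan data form a \emph{complete} system of invariants: $K$ acts transitively on flags with a given clan, and every even skew-symmetric clan is realized. You delegate this to ``the same normal-form argument as in $CI$,'' which is not in this paper either, and that is precisely where the work lies: one must make precise the cross-invariants tying the two occurrences of each number together (quantities of the type $\dim(V_i+(V_j\cap P))$, not just $\dim(V_i\cap P)$ and $\dim(V_i\cap Q)$), and then construct, for an arbitrary isotropic flag, a basis adapted simultaneously to $P\oplus Q$ and to the symmetric form. It is in this normal-form construction, not by exhibiting a single test vector, that the prohibition $i+j\neq 2m+1$ is actually forced and that surjectivity onto even clans is proved. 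Your parenthetical about the ``$O/SO$ swap'' is also muddled: the clean statement is that the connected flag variety of $SO(2m,\mathbb C)$ contains only flags whose $V_m$ lies in one fixed family, which is exactly what the evenness condition encodes; whether an element of $O(2m,\mathbb C)\setminus SO(2m,\mathbb C)$ normalizes $K$ and matches up the orbit pictures on the two components depends on the parity of $m$ and is not what makes the count come out right.
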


\noindent As in type $CI$, we fix a maximal isotropic subspace $P$ and take $Q$ to be isotropic dual to this subspace.  The criterion for a flag to lie in the orbit corresponding to a fixed clan is then the same as in type $CI$.

\begin{example}
Of the clans $(12++--12),(12+-+-12)$ the first is even skew-symmetric while the second is not.  The clan $c_o$ corresponding to the open orbit is $(1,\ldots,2m,2m-1,2m,\ldots,1,2)$ if $n=2m$ is even and\hfil\break $(1,\ldots,2m,-,+,2m-1,2m,\ldots,1,2)$ if $n=2m+1$ is odd.  We denote by $-c_0$ the clan obtained from $c_o$ by changing its signs if $n$ is odd; otherwise, we set $-c_0=c_o$.
\end{example}

\subsection{Summary}
\noindent We summarize these parametrizations in the following table.
\vskip .3in

\begin{tabular}{c c c}
\text{\bf type}&$K$&\text{\bf orbit parameters}\\
\hline
$AI$&$O(n,\mathbb C)$&\text{involutions}\\
$AII$&$Sp(2n,\mathbb C)$&\text{fixed-point-free involutions}\\
$AIII$&$GL(p,\mathbb C)\times GL(q,\mathbb C)$&\text{clans, signature }$(p,q)$\\
$CI$&$GL(n,\mathbb C)$&\text{skew-symmetric clans}\\
$CII$&$Sp(2p,\mathbb C)\times Sp(2q,\mathbb C)$&\text{symmetric clans, signature }$(2p,2q)$)\\
$BDI$&$O(p,\mathbb C)\times O(q,\mathbb C)$&\text{orthosymmetric clans, signature }$(p,q)$\\
$DIII$&$GL(n,\mathbb C)$&\text{even skew-symmetric clans}\\
\end{tabular}
\vskip .2in
\begin{tabular}{c c}
\text{\bf type}&\text{\bf open orbit}\\
\hline
$AI$&$[1\ldots n]$\\
$AII$&$[21\ldots 2n,2n-1]$\\
$AIII$&$(1\ldots q+\ldots +q\ldots 1)$\\
$CI$&$(1\ldots n,n\ldots1)$\\
$CII$&$(1\ldots q+\ldots+q\ldots 1)$\\
$BDI$&$(1\ldots q+\ldots+q\ldots 1)$\\
$DIII$&$(1,\ldots,2m,2m-1,2m,\ldots,1,2)$\text{ if }$n=2m$\text{ is even}\\
 & $(1,\ldots,2m,-,+,2m-1,2m,\ldots,1,2)$\text { if }$n=2m+1$\text{ is odd}\\
\end{tabular}
\section{The closure order on symmetric orbits}
\bigskip
The {\sl Bruhat order} on symmetric orbits in a fixed flag variety is defined by containment of closures:  we say that $\mathcal O_1\le\mathcal O_2$ if $\overline{\mathcal O}_1\subseteq\overline{\mathcal O}_2$.  The set of such orbits is called the {\sl Bruhat poset}.  This order was first systematically studied in \cite{RS90}, again building on earlier work  of Springer and others.  It is analogous to the classical Bruhat (or Bruhat-Chevalley) order on the Weyl group $W$, which coincides with the closure order on Schubert varieties.  Recall that this last order makes $W$ into a poset ranked by the length function $\ell(w) = \dim X(w)$.  This poset also has a graph structure and so is also called the {\sl Bruhat graph}.  In this graph the vertices $v,w\in W$ are adjacent if and only if there is a (not necessarily simple) reflection $s$ with $w=sv$ and $\ell(w) > \ell(v)$, or equivalently there is a reflection $t$ with $w=vt$ and $\ell(w) > \ell(v)$. Note that adjacent vertices in the Bruhat graph need {\sl not} have the higher vertex covering the lower one, although all covering relations in this poset correspond to edges in the Bruhat graph.  Denote by $\mathcal O_c,X_c$ the orbit and variety, respectively, corresponding to a vertex $c$ in this graph; in types $AI$ and $AII$ we use the notations $\mathcal O_\pi,X_\pi$ instead of $\mathcal O_c,X_c$, as orbits are parametrized by involutions $\pi$ rather than clans.  We hope there will be no confusion with the notation $X_w$ for the Schubert variety corresponding to the Weyl group element $w$,

Before describing the Bruhat order in the symmetric cases we first recall the Ehresmann-Deodhar-Proctor characterization of Bruhat-Chevalley order on $W$ in the classical cases \cite{E34,D77,P82}.

\begin{theorem}
Given two permutations $v=[v_1\ldots v_n], w=[w_1\ldots w_n]$ we have for the corresponding Schubert varieties $X_v,X_w$ in type $A_{n-1}$ that $X_v\subseteq X_w$ if and only if for all indices $i$ between 1 and $n$, if the terms $v_1,\ldots,v_i$ and $w_1,\ldots,w_i$ are both rearranged in increasing order as $a_1,\ldots,a_i$ and $b_1\ldots b_i$, respectively, then $a_j\le b_j$ for all $j\le i$.
\end{theorem}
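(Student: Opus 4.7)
The plan is to factor the asserted equivalence through a single auxiliary rank function. For any permutation $u\in S_n$ and indices $i,m\in\{1,\ldots,n\}$, set $r_u(i,m)=\#\{k\le i:u(k)\le m\}$. I would establish two reformulations: (i) $X_v\subseteq X_w$ if and only if $r_v(i,m)\ge r_w(i,m)$ for every pair $(i,m)$, and (ii) for each fixed $i$, the family of inequalities $r_v(i,m)\ge r_w(i,m)$ with $m$ varying is equivalent to the componentwise comparison $a_j\le b_j$ for $j\le i$ of the sorted first $i$ entries of $v$ and $w$ stated in the theorem. Assembling (i) with (ii) at every $i$ gives the result.

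Part (ii) is elementary. Fix $i$ and sort the first $i$ entries of $v$ and $w$ as $a_1<\cdots<a_i$ and $b_1<\cdots<b_i$. Then $r_u(i,m)$ is simply the number of these sorted entries of $u$ that are $\le m$. If $a_j\le b_j$ for every $j$, then $b_j\le m$ forces $a_j\le m$, so the $v$-count dominates the $w$-count at every threshold $m$. Conversely, if $r_v(i,m)\ge r_w(i,m)$ for all $m$, taking $m=b_j$ shows at least $j$ of the $a_k$ lie in $[1,b_j]$, so $a_j\le b_j$ since the $a$'s are in increasing order. This is the standard equivalence between pointwise dominance of two discrete distribution functions and componentwise comparison of their sorted samples.

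Part (i) rests on the rank description of Schubert cells. Identify $G/B$ with the variety of complete flags $V_\bullet$ in $\mathbb C^n$ and let $F_\bullet$ be the flag fixed by $B$. The Bruhat decomposition gives $C_w=\{V_\bullet:\dim(V_i\cap F_m)=r_w(i,m)\text{ for all }i,m\}$, and each function $V_\bullet\mapsto\dim(V_i\cap F_m)$ is upper semicontinuous on $G/B$. Hence the locus $Y_w$ cut out by the inequalities $\dim(V_i\cap F_m)\ge r_w(i,m)$ is closed, contains $C_w$, and therefore contains $X_w=\overline{C_w}$. For the reverse inclusion $Y_w\subseteq X_w$, I would stratify $Y_w$ by the Bruhat cells $C_v$ that it meets and show that any such $v$ satisfies $v\ge w$ in Bruhat order; this follows from the subword (Ehresmann) characterization of Bruhat order by induction on $\ell(v)-\ell(w)$. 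Granting $X_u=Y_u$ for every $u$, the equivalence $X_v\subseteq X_w\Leftrightarrow r_v\ge r_w$ is immediate.

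The main obstacle is the identity $X_w=Y_w$ in part (i): the semicontinuity argument yields only $X_w\subseteq Y_w$, and the reverse inclusion requires a genuine Bruhat-order input, either via the subword characterization or via verifying that $Y_w$ is irreducible of dimension $\ell(w)$. The combinatorial translation in part (ii) is routine once the rank reformulation is in hand.
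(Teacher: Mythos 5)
The paper itself gives no proof of this statement: it is recalled as the classical Ehresmann--Deodhar--Proctor criterion and simply cited to \cite{E34,D77,P82}, so there is no in-paper argument to compare yours against; I can only assess your proposal on its own terms. Your outline is the standard one (rank functions $r_u(i,m)=\#\{k\le i: u(k)\le m\}$, dominance of these counting functions versus entrywise comparison of sorted prefixes, and identification of $X_w$ with the rank-condition locus $Y_w$), and your part (ii) is complete and correct.

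The problem is the step you yourself flag as the main obstacle, the inclusion $Y_w\subseteq X_w$; as written your proposal does not close it, and the suggested justification is close to circular. Saying that every cell $C_v$ meeting $Y_w$ has $v$ comparable to $w$ ``by the subword (Ehresmann) characterization'' presupposes both Chevalley's theorem that $X_w=\bigcup_{v\le w}C_v$ with $\le$ the combinatorially defined Bruhat order \emph{and} the equivalence of that order with the rank/tableau criterion --- but the latter equivalence is essentially the theorem being proved. To make this a proof you must actually do one of: (a) show combinatorially that $r_v\ge r_w$ with $v\ne w$ forces the existence of a transposition $t$ with $v\le wt<w$ (or $w$ covering a smaller element still dominating $v$), and then use a geometric input (e.g.\ the $T$-stable curves joining $wB$ and $wtB$, or the standard $SL_2$/minimal parabolic degeneration) to conclude $C_v\subseteq\overline{C_w}$ by induction on $\ell(w)-\ell(v)$; or (b) prove directly that $Y_w$ is irreducible of dimension $\ell(w)$ (say via a Bott--Samelson resolution or an explicit parametrization), so that $Y_w=\overline{C_w}$. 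Neither is carried out, and each is where the real work of the Ehresmann criterion lives. Two smaller slips in the same passage: the cells contained in $Y_w$ are those with $v\le w$, not $v\ge w$ (smaller permutations have \emph{larger} rank numbers, as your own part (ii) shows), and the induction should be on $\ell(w)-\ell(v)$, which is nonnegative in the relevant situation.
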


\begin{example}
The permutations $[2314]$ and $[4123]$ are incomparable in Bruhat order. Looking at just the first coordinates, we have $2<4$; but then rearranging the first two coordinates as $23,14$, respectively, we find that $2\not<1$.
\end{example}

For signed permutations $[a_1,\ldots,a_n],[b_1,\ldots,b_n]$ in types $B$ and $C$ the criterion is somewhat different.  

\begin{theorem}[\cite{P82,BB05}]
The Schubert varieties $X_v,X_w$ in type $B$ or $C$ corresponding respectively to the signed permutations $v=[v_1,\ldots,v_n],w=[w_1,\ldots,w_n]$ have $X_v\subset X_w$ if and only if for all indices $i\le n$, if we rearrange the terms $v_i,\ldots,v_n$ and $w_i,\ldots,w_n)$ in increasing order as $a_i,\ldots,a_n$ and $b_i,\ldots,b_n$, respectively, then $a_j\ge b_j$ for all $j$.
\end{theorem}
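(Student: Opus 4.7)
The plan is to reduce Theorem 3.3 to the type $A$ criterion of Theorem 3.1 via the standard embedding of the hyperoctahedral Weyl group $W(C_n)$ into the symmetric group $S_{2n}$. I would equip the set $\underline{N} := \{-n, -(n-1), \ldots, -1, 1, \ldots, n\}$ with its natural total order, and to each signed permutation $w = [w_1, \ldots, w_n]$ associate the permutation $\tilde w$ of $\underline{N}$ defined by $\tilde w(i) = w_i$ and $\tilde w(-i) = -w_i$. In one-line notation, with positions read in increasing order, this reads $\tilde w = [-w_n, \ldots, -w_1, w_1, \ldots, w_n]$, and the image is the centralizer in $S_{2n}$ of the order-reversing involution $i \mapsto -i$.

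The first main step is to verify that $w \mapsto \tilde w$ is a Bruhat-order embedding, so that $X_v \subseteq X_w$ in type $B$ or $C$ if and only if $X_{\tilde v} \subseteq X_{\tilde w}$ in type $A_{2n-1}$. Under the natural inclusion of root data, a short positive root $e_i$ (or $2e_i$) of $C_n$ corresponds to the single transposition of $S_{2n}$ exchanging $i \leftrightarrow -i$, while a long positive root $e_i \pm e_j$ corresponds to the product of two commuting transpositions $(i, \pm j)(-i, \mp j)$. A direct length count shows that this embedding doubles length differences and preserves covering chains, so that closure containment on Schubert varieties transfers from type $C$ to type $A$; this is classical and recorded in \cite{BB05}.

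Once this reduction is in place, I would apply Theorem 3.1 to $\tilde v$ and $\tilde w$: for each threshold $k \in \{1, \ldots, 2n\}$ the increasing rearrangement of the first $k$ entries of $\tilde v$ must be componentwise $\le$ that of $\tilde w$. For $k \le n$ the first $k$ entries of $\tilde w$ are $-w_n, -w_{n-1}, \ldots, -w_{n-k+1}$; sorting and comparing amounts, after negation and index reversal, to requiring $a_j \ge b_j$ for $n - k + 1 \le j \le n$, where $b_{n-k+1} \le \ldots \le b_n$ is the increasing sort of $(w_{n-k+1}, \ldots, w_n)$ and similarly for $v$. Setting $i = n - k + 1$ recovers the statement exactly. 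The thresholds $k > n$ add no new conditions because the symmetry $\tilde w(-j) = -\tilde w(j)$ pairs them with thresholds $\le n$. The main obstacle will be the careful verification that the embedding preserves Bruhat order, in particular balancing the distinct behavior of short and long roots in the cover-relation analysis; the subsequent symmetry-based reduction of Theorem 3.1 to the form in the statement is then routine by comparison.
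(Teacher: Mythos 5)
The paper does not actually prove this statement: it is recalled from Proctor and Bj\"orner--Brenti \cite{P82,BB05} with no argument given, so your proposal can only be measured against the standard derivations in those sources. Your route --- embed $W(B_n)=W(C_n)$ into $S_{2n}$ as the signed permutations acting on $\{-n,\ldots,-1,1,\ldots,n\}$, use that Bruhat order on the image is the order induced from $S_{2n}$, apply the type $A$ criterion (Theorem 3.1) to $\tilde v,\tilde w$, and then check that thresholds $k\le n$ yield exactly the suffix conditions $a_j\ge b_j$ while thresholds $k>n$ are redundant because both prefixes are complements of suffixes inside the fixed multiset $\{\pm 1,\ldots,\pm n\}$ --- is sound, and it is essentially the textbook derivation in \cite{BB05}. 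The bookkeeping in your final paragraph (negating and reversing the sorted prefix, setting $i=n-k+1$) is correct, and it is also correct that this reduction is unavailable in type $D$, consistent with Theorem 3.3 needing an extra parity condition.

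The genuine weak point is your justification of the key lemma. The claim that the embedding \lq\lq doubles length differences and preserves covering chains'' is false: the sign-change simple reflection $s_0$ maps to a single simple transposition of $S_{2n}$, and in general $\ell_{S_{2n}}(\tilde w)=2\ell_{B_n}(w)-\mathrm{neg}(w)$ where $\mathrm{neg}(w)$ is the number of negative entries, so length differences are not doubled (already $e<s_0$ gives difference $1$ on both sides, while the covering $e< s_1$ in $B_n$ maps to a length-two jump in $S_{2n}$, hence not a covering). More importantly, a length count of this sort can at best give order preservation, $v\le w\Rightarrow \tilde v\le\tilde w$, which indeed follows by sending reduced words to reduced words and using the subword property; but the direction you need for the \lq\lq if'' half of the theorem is order reflection, $\tilde v\le\tilde w\Rightarrow v\le w$, and that is the nontrivial content. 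It requires either the general fact that the fixed subgroup of the diagram automorphism of $(S_{2n},S)$, which is exactly this copy of $B_n$, carries the induced Bruhat order (proved by a lifting/induction argument compatible with the automorphism), or a precise citation of that induced-order statement in \cite{BB05}. If you cite, take care to cite the induced-order result itself and not the type $B$ tableau criterion that \cite{BB05} deduces from it, since the latter is the theorem being proved. With that lemma supplied, the rest of your reduction is complete and correct.
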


\begin{example}
The signed permutations $v=[-6,-5,-4,-1,2,-3]$ and $w=[-4,6,3,2,5,-1]$ have $v\le w$ in Bruhat order.  Here the respective rearrangements are $[-3]$ and $[-1]; [-3,2]$ and $[-1,5]; [-3,-1,2]$ and $[-1,2,5]; [-4,-3,-1,2]$ and $[-1,2,3,5]; [-5,-4,-3,-1,2]$ and $[-1,2,3,5,6]; [-6,-5,-5,-3,-1,2]$ and $[-4,-1,2,3,5,6]$.
\end{example}

Type $D_n$ is more subtle; the Bruhat order in this case is {\sl not} the restriction of Bruhat order in type $C_n$ to signed permutations with evenly many signs.  

\begin{theorem}[\cite{P82,BB05}]
The Schubert varieties $X_v,X_w$ in type $D$ corresponding to the signed permutations $v=(v(1),\ldots,v(n)),w=(w(1)\ldots,w(n))$ with evenly many minus signs have $X_v\subseteq X_w$ if and only if the condition of the previous theorem on the $a_j,b_j$ holds and, in addition, if the first $k$ terms of $a_i,\ldots,a_n$ and $b_i\ldots,b_n$ have absolute values $1,\ldots,k$ in some order, then the numbers of negative $a_r$ and negative $b_r$ among these first $k$ terms have the same parity.
\end{theorem}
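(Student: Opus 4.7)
The plan is to exploit the inclusion $W(D_n)\subset W(C_n)$ into the full hyperoctahedral group, using the key observation that the reflections of type $D$ are precisely the coordinate transpositions $(i\,j)$ and the double sign-swaps sending $v_i\leftrightarrow -v_j$, but not the single sign-change $v_i\mapsto -v_i$, which is a reflection of type $C$ only. Since every saturated type $D$ Bruhat chain from $v$ to $w$ is in particular a type $C$ Bruhat chain, the forward direction of the first (rearranged-coordinate) criterion is immediate from Theorem 3.2. The parity condition is the genuinely new content of the type $D$ version and encodes exactly which type $C$ chains from $v$ to $w$ can actually be lifted into $W(D_n)$.

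For the parity condition in the forward direction, I would argue by induction on $\ell_D(w)-\ell_D(v)$ and analyze the effect of a single $D$-reflection $t=wv^{-1}$ on each sorted tail $v_i,\ldots,v_n$. An ordinary transposition preserves signs, and a double sign-swap changes two signs at once, so each cover preserves the parity of the count of negatives in any prefix of the sorted tail whose absolute values exhaust some $\{1,\ldots,k\}$. The converse I would prove by descending induction on $\ell_D(w)-\ell_D(v)$: assuming both criteria hold with $v\ne w$, produce a $D$-reflection $t$ such that $v\le wt<w$ in $W(D_n)$ and both criteria continue to hold for $(v,wt)$. A candidate $t$ is located by examining the first index where the sorted tails of $v$ and $w$ disagree; the naive type $C$ reduction may suggest a forbidden single sign-change, but the parity hypothesis guarantees that an admissible $D$-reflection (either an ordinary transposition or a double sign-swap) effecting the needed descent is available.

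The main obstacle is the converse: replacing a would-be single sign-change reflection by an admissible $D$-reflection while simultaneously preserving both the rearranged-coordinate inequalities and the parity conditions at \emph{every} index $i$. Because a double sign-swap affects two coordinates at once, it can have distant side effects on other tails $v_{i'},\ldots,v_n$, and one must verify these never create fresh parity violations. The argument requires a carefully minimal choice of $t$ together with a case analysis on whether the two indices acted on by $t$ lie inside or outside each window $\{i,\ldots,n\}$; once that is settled, the full equivalence follows from the inductive framework.
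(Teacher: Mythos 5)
The paper itself offers no proof of this statement -- it is quoted from Proctor \cite{P82} and Bj\"orner--Brenti \cite{BB05} -- so your outline has to stand on its own, and as written it does not. The decisive gap is the converse, which you yourself flag as ``the main obstacle'' and then leave unresolved: given that both the rearranged-coordinate inequalities and the parity conditions hold with $v\ne w$, you must actually \emph{produce} a $D$-reflection $t$ (transposition or double sign-swap) with $v\le wt<w$ such that both criteria persist for $(v,wt)$. The assertion that ``the parity hypothesis guarantees that an admissible $D$-reflection \ldots is available'' is a restatement of the theorem, not an argument; the whole difficulty of the type $D$ case (and the reason the criterion is not just the type $C$ criterion restricted to even signed permutations, as the paper stresses) lies precisely in this construction, where replacing a forbidden single sign change by a double sign-swap perturbs a second coordinate and can destroy the sorted-tail inequalities at other windows. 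Until that reflection is exhibited and the preservation of both criteria is verified, there is no proof.

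The forward direction also has a real gap: your induction along a saturated $D$-chain does not chain. The parity statement is conditional -- it only says something at a window $\{i,\ldots,n\}$ and a value $k$ for which \emph{both} sorted tails have their first $k$ entries of absolute values $1,\ldots,k$ -- and an intermediate element of the chain need not satisfy this exhaustion property for that window (a cover can move an entry of absolute value $\le k$ out of the window, or make an entry of absolute value $>k$ negative). At such a step the inductive hypothesis supplies no parity information, so ``each cover preserves the parity'' does not yield the comparison between the endpoints $v$ and $w$; you need either an unconditionally defined invariant specializing to the stated parity condition, or a finer choice of chain, neither of which appears in the sketch. (The preliminary claim that a $D$-chain is a $C$-chain is correct but also needs the small check that an ascent by a reflection of type $e_i+e_j$ in $D_n$ is an ascent in $B_n$, e.g.\ via $\ell_B=\ell_D+\mathrm{neg}$ and the observation that such an ascent never decreases $\mathrm{neg}$.)
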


\begin{example}
The signed permutations $(2,3,1)$ and $(3,-2,-1)$ are incomparable in Bruhat order for type $D_3$, since the parity condition is violated.  They correspond to the permutations $(2,3,1,4),(4,1,2,3)$ in type $A_3$ via the standard isomorphism between the root systems of types $A_3$ and $D_3$.  These permutations are incomparable in the Bruhat order for type $A_3$, as we saw above.
\end{example}

The Bruhat poset in the symmetric case is also ranked by (shifted) dimension of orbits.   The closed orbits are the minimal ones and all have the same dimension $d$, equal to the dimension of the flag variety of $K$; if $d$ is subtracted from all orbit dimensions then the resulting function is the rank function \cite[Property 5.12(c),Lemma 7.1]{RS90}.  The Bruhat poset again has the structure of a graph and so is again called the Bruhat graph.  Here however the graph structure is more complicated to describe, being given by the action of certain but not all root reflections on the set of symmetric orbits \cite[\S5]{RS90}. Covering relations in the symmetric poset need {\sl not} correspond to edges in the Bruhat graph.  Rather than describe this graph in general we will do it in each classical case.

\subsection{Type $A$}

In type $AIII$ the Bruhat order on symmetric orbits has been described explicitly by Wyser \cite{W16}.  To begin with, the rank $\ell(c)$ of the orbit $\mathcal O_c$ corresponding to the clan $c=(c_1\ldots c_{p+q})$ is given by
$$
\ell(c)=\sum_{c_i=c_j\in \mathbb N,i<j} (j-i-\#\{k\in\mathbb N: c_s=c_t=k \text{ for some }s<i<t<j\})
$$
\noindent and $\dim\mathcal O_c = (1/2)(p(p-1)+q(q-1))+\ell(c)$ \cite{Y97}. 

\begin{example}
In particular, the closed orbits are exactly the ones whose clans have only signs and all have the same dimension $d_{p,q}=(1/2)(p(p-1)+q(q-1))$.  As mentioned above, the open orbit has clan $(12\ldots q+\ldots+q\ldots1)$, with $p-q$ plus signs, if $p\ge q$, or the same clan with $q-p$ minus signs in the middle, if $q\ge p$.
\end{example} 

Given two clans $c=(c_1\ldots c_{p+q}),d=(d_1\ldots d_{p+q})$ of signature $(p,q)$ for every index $i$ let $c(i;+)$ be the total number of $+$ signs and pairs of equal numbers among $c_1\ldots c_i$ and let $c(i;-)$ be the total number of $-$ signs and pairs of equal numbers among $c_1\ldots c_i$. For all indices $i,j$ with $i<j$ let $c(i;j)$ be the number of pairs of equal numbers $c_s=c_t\in\mathbb N$ with $s\le i<j<t$.  Define $d(i;+),d(i;-),d(i;j)$ similarly.  Then we have \cite[Theorem 1.2]{W16}:

\begin{theorem}
With notation as above, we have $\mathcal O_c\le\mathcal O_d$ if and only if\hfil\linebreak $c(i;+)\ge d(i;+), c(i;-)\ge d(i;-)$, and $c(i;j)\le d(i;j)$ for all indices $i,j$.
\end{theorem}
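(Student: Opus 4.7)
The plan is to prove both implications by combining the geometric interpretation of the three statistics as rank functions on the flag variety with a combinatorial induction via Richardson--Springer covering moves. First, I would observe that $c(i;+)$ and $c(i;-)$ are precisely $\dim(V_i\cap P)$ and $\dim(V_i\cap Q)$ for any flag $V_\bullet$ in the orbit $\mathcal O_c$; this is the content of the parametrization described immediately after Theorem 2.1. For the third statistic, $c(i;j)$ counts matched pairs of equal numbers straddling the gap between positions $i$ and $j$, and I would recognize it as $\dim V_i - \dim(V_i\cap P) - \dim(V_i\cap(V_j+Q))$, or equivalently as the rank of the natural linear map $V_i/(V_i\cap P)\to \mathbb C^n/(V_j+Q)$. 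In either form it is a function on $G/B$ constant on $K$-orbits, and by Theorem 2.1 these statistics together separate the $K$-orbits.

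Necessity then follows from standard semicontinuity. The function $V_\bullet\mapsto\dim(V_i\cap W)$ attached to a fixed subspace $W$ is upper semicontinuous on the flag variety, so specializing from a generic flag of $\mathcal O_d$ to any flag of $\overline{\mathcal O}_d$ weakly increases this dimension; applied to $W=P$ and $W=Q$ this yields $c(i;+)\ge d(i;+)$ and $c(i;-)\ge d(i;-)$ whenever $\mathcal O_c\subseteq\overline{\mathcal O}_d$. For $c(i;j)$ the rank of a varying family of linear maps is lower semicontinuous, giving the reverse inequality $c(i;j)\le d(i;j)$.

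For sufficiency I would induct on the rank difference $\ell(d)-\ell(c)$. The key claim is that whenever $c\ne d$ satisfies all three families of inequalities, there exists a clan $c'$ with $\ell(c')=\ell(c)+1$, still satisfying the inequalities relative to $d$, and with $\mathcal O_c\subseteq\overline{\mathcal O}_{c'}$. Such covering relations are produced by the Richardson--Springer action of simple reflections on $K$-orbits \cite[\S5]{RS90}, which in type $AIII$ translates into a short list of explicit local moves on clans (swapping adjacent signs of opposite type, merging an adjacent $+-$ or $-+$ pair into matched naturals, splitting a short matched pair, sliding the endpoint of a matched pair past an adjacent sign, and so on). For each family of moves one verifies directly how it alters the triple $c(i;+), c(i;-), c(i;j)$, so that one can read off which moves are candidates for a given discrepancy between $c$ and $d$.

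The main obstacle is the combinatorial bookkeeping in this last step. Since a single local move on $c$ can perturb many of the quantities $c(i;j)$ at once, one needs a localization lemma, analogous to the tableau argument underlying Theorem 3.1, asserting that when $c\ne d$ some discrepancy among the three statistics is already witnessed at a pair of nearby positions where a Richardson--Springer move is both available and strictly brings the statistics toward those of $d$ without overshooting any other inequality. Producing this lemma through a minimal-counterexample analysis over the handful of clan move-types, and then feeding it into the dimensional induction, completes the argument.
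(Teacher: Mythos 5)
This statement is quoted from Wyser and the paper gives no proof of it (it cites \cite[Theorem 1.2]{W16}), so your attempt must be judged against Wyser's argument, whose outline (semicontinuity for necessity, a move-by-move induction for sufficiency) you have broadly reproduced. However, there are two genuine problems. First, your geometric interpretation of $c(i;j)$ is wrong: since $V_i\subseteq V_j$, the natural map $V_i\to\mathbb C^n/(V_j+Q)$ is identically zero, so the ``rank of $V_i/(V_i\cap P)\to\mathbb C^n/(V_j+Q)$'' (and the equivalent dimension formula you give) vanishes for every flag and cannot equal $c(i;j)$. A concrete check: for the clan $(1+1)$ of signature $(2,1)$ one has $c(1;2)=1$, while your quantity is $0$ on the whole orbit. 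The correct invariant is $c(i;j)=\dim\bigl(\pi_P(V_i)+V_j\bigr)-j$, where $\pi_P$ is the projection onto $P$ along $Q$ (equivalently the rank of $V_i\xrightarrow{\pi_P}P\to\mathbb C^n/V_j$); with this replacement your semicontinuity argument for the ``only if'' direction does go through, since intersection dimensions with the fixed spaces $P,Q$ can only jump up under specialization and the rank above can only drop.

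Second, and more seriously, the ``if'' direction is not actually proved. The entire content of that direction is the claim you defer to a ``localization lemma'': that whenever $c\ne d$ satisfies all the inequalities, some explicit clan move produces $c'$ with $\ell(c')=\ell(c)+1$, $\mathcal O_c\subset\overline{\mathcal O}_{c'}$, and $c'$ still satisfying the inequalities relative to $d$. Establishing this requires a careful case analysis of how each move changes the three families of statistics, plus a verification that each move (including those not realized by a single Richardson--Springer simple-reflection edge, such as the last three in Wyser's list recorded after Theorem 3.5) really does give a closure containment; none of this is carried out, and a ``minimal-counterexample analysis'' is named rather than executed. As it stands the proposal establishes only (after repairing the invariant) the necessity of the inequalities, and leaves the sufficiency --- the hard half of \cite{W16} --- as an unproved assertion.
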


\begin{example}
Here is the Hasse diagram depicting the Bruhat poset for type $AIII$ in the special case $p=q=2$.  For the sake of clarity we have omitted the edges between vertices whenever the higher one does not cover the lower one and we have not labelled the edges.  Two of the edges, namely those from $1122$ to $1212$ to $1-+1$ and $1+-1$ correspond to edges in the Hasse diagram but {\sl not} in the Bruhat graph; all other edges belong to the Bruhat graph.  There are additional edges in the Bruhat graph not depicted here; e.g. from $+-+-$ to $1-+1$.  As previously observed, edges in the Bruhat graph do not always correspond to covering relations in the Hasse diagram.

\begin{center}
$\xymatrix{& & & 1221 & & &\\ & & 1+-1\ar[ru]&1212\ar[u]&1-+1\ar[lu]& &\\ & +1-1\ar[ru]\ar[rru]&1+1-\ar[u]\ar[ru]&1122\ar[lu]\ar[u]\ar[ru]&1-1+\ar[lu]\ar[u]&-1+1\ar[llu]\ar[lu] &\\ +11-\ar[ru]\ar[rru]&+-11\ar[u]\ar[rru]&11+-\ar[u] \ar[ru]&11-+\ar[u]\ar[ru]&-+11\ar[lu]\ar[ru]&-11+\ar[lu]\ar[u]\\ ++--\ar[u]&+-+-\ar[lu]\ar[u]\ar[ru]&+--+\ar[lu]\ar[ru]& -++-\ar[lu]\ar[ru]& -+-+\ar[lu]\ar[u]\ar[ru]&--++\ar[u]}
$
\end{center}
\end{example}
\medskip
Wyser also gives a set of combinatorial moves generating the Bruhat order on clans, each replacing a pattern of (not necessarily adjacent) entries in it by another one of the same length:

\begin{enumerate}
\item replace $+-$ by $aa$ (that is, by a pair of equal numbers not equal to any other number appearing)
\item replace $-+$ by $aa$
\item replace $aa+$ by $a+a$ (for $a\in\mathbb N$)
\item replace $aa-$ by $a-a$
\item replace $+aa$ by $a+a$
\item replace $-aa$ by $a-a$
\item replace $aabb$ by $abab$ (for $a,b\in\mathbb N, a\ne b$)
\item replace $aabb$ by $a+-a$
\item replace $aabb$ by $a-+a$
\item replace $abab$ by $abba$
\end{enumerate}
\medskip
\noindent Then we have \cite[Theorem 2.8]{W16}:

\begin{theorem} 
We have $\mathcal O_c< \mathcal O_d$ if and only if $d$ can be obtained from $c$ by a sequence of moves of one of the above types.  
\end{theorem}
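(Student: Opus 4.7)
The statement is an \emph{if and only if}, so I would verify each direction separately using Theorem 3.4 as the bridge.

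For the \emph{if} direction, I would show that every one of the ten moves, applied at any position, produces a clan $d$ of the same signature for which the three families of inequalities in Theorem 3.4 hold strictly enough to witness $\mathcal O_c < \mathcal O_d$. Each move affects only a few positions, so this is a local check. For instance, move (1) turns $+-$ at positions $s, s+1$ into $aa$: this strictly decreases $c(s;+)$ (so $d(s;+) = c(s;+) - 1$), leaves the $-$-count alone after position $s+1$, and does not affect any $c(i;j)$ since the only new pair is adjacent. Moves (3)--(6) and (7)--(10), which push signs past pairs or reshuffle pairs of pairs, either raise one of the pair-nesting counts $d(i;j)$ at a single index without disturbing the sign counts, or produce a strict increase at the level of the rank formula $\ell(c)$. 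A short case-by-case verification completes this direction.

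For the \emph{only if} direction I would induct on $\ell(d) - \ell(c)$. Given $c < d$ with $c \ne d$, it suffices to produce a single move transforming $c$ into an intermediate clan $c'$ with $\mathcal O_c < \mathcal O_{c'} \le \mathcal O_d$, since induction on the pair $(c', d)$ then yields the full chain. To locate such a move I would examine the leftmost index $i$ at which one of the Theorem 3.4 inequalities $c(i;+) \ge d(i;+)$, $c(i;-) \ge d(i;-)$, $c(i;j) \le d(i;j)$ is strict, and then case-split on the entries of $c$ and $d$ near that position. For example, if $c(i;+) > d(i;+)$ and this is the first discrepancy, then $c$ must contain, at or just after position $i$, either a $+$ immediately followed by a $-$ (apply move 1), or a pattern $+aa$ or $aa+$ (apply move 5 or 3) whose rearrangement matches what $d$ does at the same positions; similar choices handle $c(i;-) > d(i;-)$ and $c(i;j) < d(i;j)$.

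The main obstacle is the \emph{exhaustiveness} of this case analysis: I need the lemma that whenever $c \ne d$ and the Theorem 3.4 inequalities all hold, $c$ contains at least one of the ten left-hand-side patterns in a configuration where applying the corresponding move yields a clan still below $d$. Subtleties include ensuring that the pair of equal numbers one creates (in moves 1, 2) or separates (in moves 7--10) can be chosen not to conflict with existing pairs, and checking that moves 8 and 9, which destroy a pair in favor of two signs, are actually available whenever needed. A convenient bookkeeping device is to match pairs in $c$ with pairs (or sign pairs) in $d$ via a crossing-minimizing bijection and to use the three statistics to track progress; once the matching is set up, each residual discrepancy between $c$ and $d$ points to exactly one of the ten moves, and the induction closes.
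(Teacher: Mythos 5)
The paper itself does not prove this statement: it is quoted verbatim from Wyser \cite[Theorem 2.8]{W16}, so your argument has to stand on its own, and as written it does not. The \emph{if} direction is indeed a finite local check and your outline is essentially right, but note that the moves act on \emph{not necessarily adjacent} entries: your computation for move (1) assumes the $+$ and $-$ sit at positions $s,s+1$ and that ``the only new pair is adjacent,'' which is not the general situation. In general the new pair does change the crossing statistics $c(i;j)$ for $s\le i<j<t$, only in the direction permitted by Theorem 3.4, so the conclusion survives, but the verification must be redone for arbitrary positions, and likewise for moves (3)--(10).

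The genuine gap is in the \emph{only if} direction, which is the entire content of the theorem. Your induction on $\ell(d)-\ell(c)$ is the right frame, but it rests on the lemma you yourself label ``the main obstacle'': whenever $\mathcal O_c<\mathcal O_d$ there exists a single move taking $c$ to some $c'$ with $\mathcal O_c<\mathcal O_{c'}\le\mathcal O_d$. You give only a plan for this (look at the leftmost index where one of the Theorem 3.4 inequalities is strict, then case-split), and the one case you spell out is already not correct as stated: if $c(i;+)>d(i;+)$ at the first discrepancy, $c$ need not contain a $+$ immediately followed by a $-$, nor a contiguous $aa+$ or $+aa$; the available move may instead be one of (7)--(10) involving two existing pairs, the relevant entries may be far apart, and one must also check that the chosen move does not destroy one of the other inequalities ($c'(j;+)\ge d(j;+)$, $c'(j;-)\ge d(j;-)$, $c'(j;k)\le d(j;k)$) at indices far from $i$. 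The ``crossing-minimizing bijection'' you invoke as bookkeeping is never constructed, and no argument is given that every residual discrepancy points to an applicable move that stays below $d$. Until that exhaustive case analysis is actually carried out --- which is precisely the combinatorial work done in \cite{W16} --- the hard direction remains unproved.
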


Observe that in our example the edges from $1122$ to $1+-1$ and $1-+1$ correspond to the eighth and ninth moves in Wyser's list; the edge from $1122$ to $1221$ corresponds to the tenth move in the list.  In general (in type $AIII$) moves of all types except the last three correspond to edges in the Bruhat graph and all such edges arise in this way.  It is not known whether the order complex of the Bruhat poset is shellable, or whether the poset is $CL$- or $EL$-shellable. 

Turning now to type $AI$, we find that 

\begin{theorem}[{\cite[Example 10.2]{RS90}}]
The closure order on orbits in type $AI$ corresponds to the reverse Bruhat order on $I_n$. 
\end{theorem}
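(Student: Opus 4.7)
My plan is to characterize $\overline{\mathcal{O}}_\pi$ as the closed subvariety of $G/B$ cut out by the rank conditions coming from Theorem 2.3, then invoke the Ehresmann-Deodhar-Proctor criterion (Theorem 3.1) to translate containment of these rank loci into (reverse) Bruhat order on $I_n$. Write $r_\pi(j,k)=\#\{\ell\le j:\pi_\ell\le k\}$ for the rank matrix of $\pi$ viewed as a permutation, so that by Theorem 2.3 the orbit $\mathcal{O}_\pi$ consists precisely of those flags for which the rank of the dot product restricted to $V_j\times V_k$ equals $r_\pi(j,k)$ for every $(j,k)$.

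Since the condition $\mathrm{rank}\le r$ on a bilinear form is closed (cut out by the vanishing of $(r+1)\times(r+1)$ minors), the $K$-stable locus
$$Z_\pi=\bigl\{V_\bullet\in G/B:\mathrm{rank}\bigl(\langle\cdot,\cdot\rangle|_{V_j\times V_k}\bigr)\le r_\pi(j,k)\text{ for all }j,k\bigr\}$$
is a closed subvariety of $G/B$ containing $\mathcal{O}_\pi$, yielding the easy inclusion $\overline{\mathcal{O}}_\pi\subseteq Z_\pi$. For the reverse inclusion, note that $Z_\pi=\bigsqcup_\sigma\mathcal{O}_\sigma$ where $\sigma$ ranges over involutions with $r_\sigma\le r_\pi$ entrywise, so it suffices to show each such $\mathcal{O}_\sigma$ lies in $\overline{\mathcal{O}}_\pi$. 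I would induct on the total discrepancy $\sum_{j,k}(r_\pi(j,k)-r_\sigma(j,k))$ and reduce to the case in which $\sigma\lessdot\pi$ is a covering relation in the induced order on $I_n$. In that case $\sigma$ differs from $\pi$ by a single transposition (as multiplication or conjugation), and a one-parameter family of flags in $\mathcal{O}_\pi$ degenerating to $\mathcal{O}_\sigma$ can be produced by acting on a representative flag by a curve inside a suitable root subgroup of $K$ (equivalently, inside an $SL_2$ in some minimal parabolic $P_i\supset B$). Carrying out this degeneration check case by case is the technical core of the argument and is precisely what is done in \cite[Example 10.2]{RS90}.

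Granting $\overline{\mathcal{O}}_\pi=Z_\pi$, the theorem becomes a one-line deduction: $\mathcal{O}_\sigma\le\mathcal{O}_\pi$ in the closure order if and only if $r_\sigma(j,k)\le r_\pi(j,k)$ for all $j,k$, which by Theorem 3.1 is equivalent to $\sigma\ge\pi$ in the Bruhat order on $S_n$, hence to $\sigma\le\pi$ in the reverse Bruhat order on $I_n$. The main obstacle is precisely the inclusion $Z_\pi\subseteq\overline{\mathcal{O}}_\pi$: the rank conditions visibly define a $K$-stable closed set containing $\mathcal{O}_\pi$, but ruling out spurious components and verifying that every lower orbit actually appears as a limit of flags in $\mathcal{O}_\pi$ requires the explicit degeneration construction sketched above.
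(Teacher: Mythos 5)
The skeleton you set up is the standard one and its easy parts are fine: lower semicontinuity of rank gives $\overline{\mathcal O}_\pi\subseteq Z_\pi$, and the translation ``$r_\sigma\le r_\pi$ entrywise $\Leftrightarrow$ $\sigma\ge\pi$ in Bruhat order'' is the rank-matrix form of Theorem 3.1. Note, however, that the paper itself gives no proof of this statement -- it is quoted from \cite[Example 10.2]{RS90} -- and your proposal ultimately does the same: the entire geometric content, namely $Z_\pi\subseteq\overline{\mathcal O}_\pi$ (no spurious components, every rank-admissible orbit really is a limit of flags in $\mathcal O_\pi$), is exactly the step you declare to be ``precisely what is done in \cite[Example 10.2]{RS90}.'' So as it stands the proposal does not prove the theorem; it reduces it to the cited result plus a sketch.

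Moreover the sketch of that hard step contains a concrete error. You reduce to a covering relation $\sigma$ covered by $\pi$ in the induced order on $I_n$ and assert that then ``$\sigma$ differs from $\pi$ by a single transposition (as multiplication or conjugation),'' to be realized by a curve in one root subgroup. That is false: in $I_4$ the involution $2143=(1\,2)(3\,4)$ is covered by $4231=(1\,4)$ (their ranks differ by one and no involution lies strictly between them), yet the two have different cycle types, so no conjugation $t\mu t$ relates them, and $(1\,4)$ does not commute with $(1\,2)(3\,4)$, so the move $\nu=t\mu=\mu t$ is unavailable either. This is exactly the phenomenon the paper flags: by Proposition 3.1 the Bruhat-graph edges in type $AI$ are precisely these transposition moves, and covering relations in the symmetric poset need \emph{not} correspond to Bruhat-graph edges (see the discussion before Section 3.1 and the Hasse diagram in the $n=4$ example, where the edge from $2143$ to $4231$ appears). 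Consequently the induction ``cover $=$ one transposition $=$ one-parameter degeneration in a single $SL_2$'' cannot work as stated; the genuine argument (as in Richardson--Springer) proceeds instead through the minimal-parabolic operations $P_s\cdot\mathcal O$ and the order they generate, or through an explicit verification that each orbit satisfying the rank inequalities meets the closure, and this is the part your write-up would need to supply rather than cite.
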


This restricted order is studied in \cite{I04}, where moves analogous to the above moves in type $AIII$ are given generating this order.  The Bruhat poset is ranked by the function
$$
\ell(\pi) = \lfloor n^2/4\rfloor - \sum_{i<\pi(i)} (\pi(i) - i - \#\{k\in\mathbb N: i<k<\pi(i),\pi(k)<i\})
$$  
\noindent for all $\pi\in I_n$ \cite{Y97'}.  An alternative formula \cite[Thm.\ 5.2]{I04}is
$$
\ell(\pi)=\lfloor n^2/4\rfloor -\frac {\text{inv}(\pi)+\text{env}(\pi)} 2
$$
\noindent where inv$(\pi)$ is the number of inversions of $\pi$ and exc$(\pi)$ is the number of excedances of $\pi$ (i.e. the number of indices $i$ such that $\pi(i)>i$). 
Inciitti also shows that this poset is EL-shellable \cite[Thm.\ 6.2]{I04}.

\begin{example}
There is just one closed orbit $\mathcal O_{w_0}$, corresponding to the longest element $w_0$ of $I_n$; it has dimension $m(m-1)$ if $n=2m$ is even and dimension $m^2$ if $n=2m+1$ is odd.  The open orbit corresponds to the identity involution.  
\end{example}

The graph structure on $I_n$ is given by

\begin{proposition}[\cite{M19}]
$\mu,\nu$ in the Bruhat graph are adjacent if and only if either $\nu=t\mu t$ for some transposition $t$ not commuting with $\mu$ or $\nu=t\mu=\mu t$ for some transposition $t$ commuting with $\mu$.
\end{proposition}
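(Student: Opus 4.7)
The plan is to invoke the Richardson--Springer framework, which describes edges in the Bruhat graph for $K$-orbits via the (not necessarily simple) reflection action on the orbit set, and then to compute this action explicitly in the $AI$ case. Here the root reflections are exactly the transpositions $t=(i,j)\in S_n$, and orbits are parametrized by involutions $\mu\in I_n$, so the problem reduces to determining, for each pair $(\mu,t)$, the involution $\nu$ (if any) produced from $\mu$ by the action of $t$.

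The two possibilities in the statement exhaust the natural cases. If $t$ does not commute with $\mu$, then $t\mu\ne\mu t$ and neither is an involution, but $t\mu t$ is an involution (being conjugate to $\mu$) distinct from $\mu$; this is the element produced by the cross action. If instead $t$ commutes with $\mu$, then $t\mu=\mu t$ is itself a product of two commuting involutions, hence an involution, and it is distinct from $\mu$ whenever $t$ is not a $2$-cycle already appearing in the cycle decomposition of $\mu$ (in which case $t\mu=\mu$ and no new orbit appears); this is the alternative (type II) outcome. I would verify in each case, using Incitti's rank formula $\ell(\pi)=\lfloor n^2/4\rfloor-(\text{inv}(\pi)+\text{exc}(\pi))/2$, that $\ell(\nu)\ne\ell(\mu)$, confirming that $\mathcal{O}_\mu\ne\mathcal{O}_\nu$ and hence that a genuine edge is produced.

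For the converse direction, I would argue that every Bruhat-graph adjacency between $\mathcal{O}_\mu$ and $\mathcal{O}_\nu$ comes from some reflection $t$ carrying $\mu$ to $\nu$, by the general Richardson--Springer description. The dichotomy above then forces $\nu$ to be of one of the two stated forms, since for any fixed transposition $t$ the requirement that $t$ act on $\mu$ to produce another involution leaves only these two choices. A direct inspection of the five Richardson--Springer orbit types for simple reflections (complex, real I/II, compact and noncompact imaginary), extended to arbitrary reflections via conjugation by elements of $S_n$, confirms that these are exactly the cases producing edges in the graph and excludes any third possibility.

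The main obstacle I expect is precisely this converse: ruling out any subtler adjacency arising from a reflection that neither commutes with $\mu$ nor produces its conjugate in the expected way. The cleanest resolution is to observe that in the $AI$ setting the action of a transposition on an involution is set-theoretically determined by whether $t$ preserves the graph of $\mu$, so the two cases above are mutually exclusive and jointly exhaustive among transpositions, matching the edges of the Bruhat graph one-to-one.
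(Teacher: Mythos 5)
The paper itself offers no argument for this proposition (it is quoted from \cite{M19}), so your proposal has to be judged as a free-standing proof, and as written it is an outline with one outright error and one essential step assumed rather than proved. The error: you assert that if $t$ is a $2$-cycle already occurring in the cycle decomposition of $\mu$ then $t\mu=\mu$ ``and no new orbit appears.'' In fact $t\mu=\mu t$ is then the involution obtained from $\mu$ by deleting that $2$-cycle, which is a different involution lying strictly higher in the poset, and such pairs are genuine edges of the second kind in the statement (for instance $2143$ and $1243$ in Example 3.6, with $t=(1\,2)$). Your final edge set happens to survive only because the same unordered pair can be seen from its lower endpoint, where $t$ joins two fixed points; but the case analysis as you wrote it is incorrect, and it is precisely the kind of bookkeeping the proposition is supposed to settle.

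The substantive gap is in both the ``if'' and ``only if'' directions: you take for granted that the edges of the Bruhat graph are exactly those obtained by letting an arbitrary transposition ``act'' on $\mu$, and that the action of a non-simple reflection can be read off from the Richardson--Springer classification for simple roots ``extended to arbitrary reflections via conjugation by elements of $S_n$.'' But the paper is explicit (Section 3) that the graph structure is given by the action of \emph{certain but not all} root reflections on the set of orbits, so the content of the proposition is exactly to determine, for each transposition $t$ and involution $\mu$, whether an edge exists and what its other endpoint is. That cannot be quoted from the five simple-root orbit types; the ``extension by conjugation'' is the step that needs proof --- one must show that the $W$-action of Lemma 4.1 carries edges to edges with conjugated labels (this is not automatic even for Bruhat graphs of Coxeter groups), or argue geometrically with $T$-stable curves and Brion's slices as is done in \cite{M19}. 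In particular the claim that \emph{every} pair $(\mu,\,t\mu t)$ with $t$ not commuting with $\mu$ is an edge, including non-covering ones such as $4321$ and $2143$, is asserted but never established, and the promised verification that $\ell(t\mu t)\ne\ell(\mu)$ via Incitti's rank formula is left undone. So the proposal identifies the correct dichotomy but does not yet constitute a proof.
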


\begin{example}
Here is the Hasse diagram for poset in type $AI$ with $n=4$.

\begin{center}
$\xymatrix{ & 1234 &\\
2134\ar[ru]&1324\ar[u]&1243\ar[lu]\\
3214\ar[u]\ar[ru]&2143\ar[lu]\ar[ru]&1432\ar[lu]\ar[u]\\
3412\ar[u]\ar[ru]\ar[rru]&4231\ar[lu]\ar[u]
\ar[ru]& \\
&4321\ar[lu]
\ar[u]\\}
$
\end{center}
\end{example}

\noindent Very similar results hold for type $AII$.  

\begin{theorem}[\cite{CCT15}]
The Bruhat order on involutions without fixed points in $S_{2m}$ is again the reverse Bruhat-Chevalley order, restricted to $I_{2m}'$.  The rank of the vertex corresponding to an involution $\pi$ is given by the formula of the previous case, replacing $n$ by $2m$.  The vertices $\mu,\nu$ are adjacent in the Bruhat graph if and only if $\nu= t\mu t$ for some transposition $t$ not commuting with $\mu$.   The closed orbit $\mathcal O_{w_0}$ again corresponds to the longest element $w_0$ of $I_{2m}'$ (and of $S_{2m}$) and has dimension $m^2$; the open orbit has involution $(2,1,4,3,\ldots,2m,2m-1)$.
\end{theorem}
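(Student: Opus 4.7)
\emph{Proof plan.} I would adapt the argument of Theorem 3.6 to the fixed-point-free setting, using the Richardson--Springer monoid action of $S_{2m}$ on $K$-orbits. For each simple transposition $s=s_i=(i,i+1)$ and each $\pi\in I_{2m}'$, the raising operation coming from the minimal parabolic $P_i\supset B$ sends $\pi$ to $s\pi s$ whenever $s$ does not commute with $\pi$. The alternative raising rule available in type $AI$, sending $\pi$ to $s\pi=\pi s$ when $s$ commutes with $\pi$, cannot occur here: absence of fixed points forces $\pi(i)=i+1$ in the commuting case, and then $s\pi$ is the identity on $\{i,i+1\}$, introducing two fixed points and exiting $I_{2m}'$. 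So only conjugation moves $\pi\to s\pi s$ are realized in the monoid action, yielding the stated Bruhat graph adjacency: $\mu,\nu$ are adjacent iff $\nu=t\mu t$ for some transposition $t$ (not necessarily simple) that does not commute with $\mu$.

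Next I would match the resulting closure order with the reverse of Bruhat--Chevalley on $S_{2m}$ restricted to $I_{2m}'$. One direction is immediate since every raising move strictly decreases the $S_{2m}$-length. For the converse, I invoke the subword characterization of Bruhat order together with the standard classification of covers in the induced order on involutions in $S_{2m}$: each cover is either a conjugation by a non-commuting transposition, or a ``fixed-point toggle'' by a commuting one. The second kind exits $I_{2m}'$, and an induction using lifting/chain arguments (in the spirit of \cite{I04} and \cite{CCT15}) shows that any Bruhat relation within $I_{2m}'$ can be realized by chains of conjugation covers alone, identifying the two posets up to reversal.

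The remaining assertions follow by inspection of the extremes together with Yamamoto's dimension formula. The closed orbit is the minimum of the closure order, hence the $S_{2m}$-longest element $w_0=[2m,\ldots,1]$, which is fixed-point-free because $2m+1-i\ne i$ for integer $i$; its dimension $m^2$ matches that of the symplectic flag variety of $Sp(2m,\mathbb C)$. Dually the open orbit is the ordinary-Bruhat minimum of $I_{2m}'$, which a direct count of inversions identifies as the length-$m$ product $(2,1,4,3,\ldots,2m,2m-1)$. The rank formula is inherited from Yamamoto's orbit-dimension computation in \cite{Y97'}: evaluating the type-$AI$ expression at $\pi\in I_{2m}'$ yields the shifted dimension $\dim\mathcal O_\pi-m^2$, verifiable at $w_0$ (giving $0$) and at the open orbit (giving $m^2-m=m(2m-1)-m^2$), and extended by induction along the conjugation covers. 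The principal obstacle is the chain-completion argument in the second paragraph: one must ensure every reverse-Bruhat relation within $I_{2m}'$ admits a realization avoiding the fixed-point-toggle moves that would exit the set, which requires a careful local analysis of how the $2$-cycles of $\pi$ can be rearranged under successive conjugations by non-commuting transpositions.
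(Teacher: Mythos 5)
The paper does not prove this theorem; it quotes it from \cite{CCT15} (with the closure order going back to \cite{RS90} and the rank formula to \cite{Y97'}), so your sketch has to stand on its own. Much of it is sound: the observation that a transposition commuting with a fixed-point-free involution must swap $i$ with $\pi(i)$, so that the ``multiplication'' edges available in type $AI$ would create two fixed points and leave $I_{2m}'$, is correct; so are the identifications of the extremes (the longest element is fixed-point-free, the closed orbit has dimension $m^2=\dim Sp(2m,\mathbb C)/B_K$, the open orbit is $(2,1,4,3,\ldots,2m,2m-1)$) and your numerical checks of the rank formula at both ends.

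The genuine gap is the step you yourself flag and then do not carry out: the claim that every relation $\mu<\nu$ in the reverse Bruhat order within $I_{2m}'$ is realized by a chain of conjugation covers, each corresponding to a closure containment. This has two unproved parts. First, the purely combinatorial assertion that the restriction of Bruhat order to $I_{2m}'$ is graded with all covers of the form $\mu\mapsto t\mu t$ is precisely the hard content of the Incitti-type analysis in \cite{I04} and \cite{CCT15}; it does not follow from the subword property or generic lifting arguments, so invoking those results ``in the spirit of'' the papers being proved is circular. Second, even granting the combinatorial chain, a cover given by conjugation by a non-simple transposition is not a Richardson--Springer monoid move, and the monoid action by simple reflections only generates the weak order, which is in general strictly coarser than the closure order; you still owe the geometric input showing such a cover yields containment of orbit closures. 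The standard way to close both gaps at once is to use the rank description of the orbits (the symplectic analogue of Theorem 2.2: the orbit is determined by the ranks of the symplectic form on $V_j\times V_k$), obtain one inclusion from semicontinuity of rank, and identify the resulting system of inequalities with the Ehresmann criterion of Theorem 3.1 for the (reverse) Bruhat order for the other inclusion. Similarly, your adjacency claim proves only that commuting-type edges cannot occur inside $I_{2m}'$; the ``only if'' direction needs the classification of reflection edges (as in Proposition 3.1 for type $AI$) as input. As written, the central equivalence of the two orders is asserted rather than proved.
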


\noindent An alternative formula for the rank $\ell(\pi)$ is $n^2-n-\frac {\text{inv}(\pi)-n}  2$\cite[Prop.\ 8]{CCT15}.  The poset is known to be EL-shellable \cite[Thm. 1]{CCT15}.

\subsection{Type $C$}

In type $CII$ the closed orbits are those whose clans $c$ have only signs and each has dimension $p^2+q^2$.  In general, defining $\ell(c)$ a in type $AIII$, the rank $r(c)$ of $\mathcal O_c$ is given by\hfil\break $(1/2)(\ell(c)+\#\{t\in\mathbb N: c_s=c_t\in\mathbb N\text{ and }s\le m<t\le 2m+1-s\})$.  The open orbit has clan\hfil\break $c_{p,q}=(1,2,\ldots,2q+\ldots+2q-1,2q,2q-3,2q-2,\ldots,1,2)$, with $p-q$ plus signs if $p\ge q$, and similarly if instead $q\ge p$.  Wyser's criterion for $\mathcal O_c\le\mathcal O_d$ in type $AIII$ then carries over to this case, except that there is an additional requirement.

\begin{theorem}[\cite{M23}]
Retain the notation of Theorem 3.4.  Given clans $c,d$ parametrizing orbits $\mathcal O_c,\mathcal O_d$ in type $CII$, for every index $i>m$, let $c(i)$ be the number of pairs of equal numbers $c_s,c_t$ with $s<m<t\le 2m+1-s,s\le i$ and define $d(i)$ similarly.  Then $\mathcal O_c\le\mathcal O_d$ if and only if $c(i;+)\ge d(i;+),\hfil\break c(i;-)\ge d(i;-),c(i;j)\le d(i;j)$, and $c(i)\le d(i)$ for all indices $i,j$.
\end{theorem}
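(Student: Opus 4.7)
\medskip
\noindent\emph{Proof plan.} The approach is to compare the type $CII$ closure order with the type $AIII$ closure order via the closed embedding of flag varieties induced by $Sp(2m,\mathbb{C}) \hookrightarrow GL(2m,\mathbb{C})$. Under this embedding the subspaces $P,Q$ used to define the action of $K = Sp(2p,\mathbb{C})\times Sp(2q,\mathbb{C})$ coincide with those used for $K' = GL(2p,\mathbb{C})\times GL(2q,\mathbb{C})$, and a short check from the parametrization shows that, for a symmetric clan $c$, the intersection of the isotropic flag variety with the type $AIII$ orbit $\mathcal{O}_c^{AIII}$ is precisely the single $CII$-orbit $\mathcal{O}_c$, the clan being reinterpreted as a plain clan of signature $(2p,2q)$. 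Since $K'$-orbits are either contained in or disjoint from a $K'$-stable closed set, $\mathcal{O}_c \le \mathcal{O}_d$ in the $CII$ order forces $\mathcal{O}_c^{AIII} \subseteq \overline{\mathcal{O}_d^{AIII}}$, and Theorem~3.4 then yields the three Wyser inequalities $c(i;+)\ge d(i;+)$, $c(i;-)\ge d(i;-)$ and $c(i;j)\le d(i;j)$ at once.

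\medskip
\noindent For the new condition $c(i)\le d(i)$, I would interpret $c(i)$ as the generic value on $\mathcal{O}_c$ of a $K$-invariant rank function measuring how the symplectic form pairs $V_i\cap P$ against a natural subspace of $V_i^\perp$ determined by the extended flag $V_0\subset\cdots\subset V_{2m}$. Upper semicontinuity of this rank in a flat family then forces $c(i)\le d(i)$ whenever $\mathcal{O}_c$ lies in $\overline{\mathcal{O}_d}$, completing necessity. For sufficiency, the plan is to adapt Wyser's moves (Theorem~3.5) to the symplectic setting: each $AIII$ move is replaced by its symmetrized form, applied either as a simultaneous pair of mirror moves at positions $i$ and $2m+1-i$ or as a single central move across the midpoint, together with at least one new move whose effect is to raise $c(\cdot)$ by one while leaving Wyser's three invariants unchanged. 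Each move is verified to be a cover relation by computing the change it induces in the rank formula $r(c)$ stated before the theorem, and one then argues inductively that any $d$ satisfying all four inequalities with respect to $c$ is reachable from $c$ by a sequence of such moves.

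\medskip
\noindent The principal obstacle is the necessity of the symplectic inequality $c(i)\le d(i)$. This condition is invisible to type $AIII$, so it cannot be read off from the inclusion into $GL(2m,\mathbb{C})/B$; one must exhibit a $K$-equivariant, upper-semicontinuous rank function on the symplectic flag variety whose generic value on $\mathcal{O}_c$ equals $c(i)$, and then verify semicontinuity in explicit degenerations linking $\mathcal{O}_c$ and $\mathcal{O}_d$. The sufficiency direction is comparatively routine once the list of symplectic moves is in hand, though some care is needed to ensure that the moves raising $c(\cdot)$ do not accidentally disturb one of Wyser's three original invariants, and to check that the induction on discrepancy terminates.
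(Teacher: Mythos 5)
The paper itself does not prove this theorem; it quotes it from \cite{M23}, so your plan can only be judged on its own merits, and as it stands it has two genuine gaps. The first half of your necessity argument is fine: the $CII$ orbit $\mathcal O_c$ is exactly the intersection of the $AIII$ orbit $\mathcal O_c^{AIII}$ with the (closed) image of $Sp(2m,\mathbb C)/B$ in $GL(2m,\mathbb C)/B$, and since $K'$-orbit closures are $K'$-stable, $\mathcal O_c\le\mathcal O_d$ in type $CII$ does force $\mathcal O_c^{AIII}\le\mathcal O_d^{AIII}$, whence Wyser's three inequalities by Theorem 3.4. But the necessity of the fourth inequality $c(i)\le d(i)$ is precisely the new content of the theorem, and your treatment of it is a placeholder: you say you ``would interpret $c(i)$ as the generic value of a $K$-invariant rank function'' without ever writing that function down. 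Until you exhibit a concrete semicontinuous invariant on the symplectic flag variety (built from the symplectic form, the subspaces $P,Q$, and the canonically extended flag $V_{2m-j}=V_j^\perp$) whose constant value on $\mathcal O_c$ is $c(i)$, and check the direction of the semicontinuity (rank-type functions can only drop under specialization, so you need the inequality to come out as $c(i)\le d(i)$ on the boundary, not the reverse), this half of necessity is not proved. Note also that $c(i)$ counts pairs straddling the midpoint with the additional constraint $t\le 2m+1-s$, so the invariant cannot be a naive rank of the form on $V_i\cap P$ times something; matching the combinatorial count exactly is where the work lies.

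The sufficiency direction is also only a sketch, and it conflates two distinct tasks. Listing symmetrized moves and computing that each changes the rank $r(c)$ by one shows at most that the moves are plausible covers in the combinatorially defined poset; it does not show that each move realizes an actual closure containment $\mathcal O_c\subset\overline{\mathcal O_d}$, which requires a geometric argument (an explicit degeneration of flags, or the action of minimal parabolics/root subgroups on orbits as in \cite{RS90}, \cite{W16}). Separately, you must prove the combinatorial statement that any pair $c,d$ satisfying all four inequalities is connected by a chain of such moves; your ``one then argues inductively that\dots the induction on discrepancy terminates'' is exactly the hard step and is not carried out. Incidentally, the ``new move raising $c(\cdot)$'' you postulate is not really new: it is Wyser's seventh move $aabb\mapsto abab$ applied to a block of indices symmetric about the midpoint (the one move the paper allows to act singly rather than with its mirror image), and identifying it as such would sharpen your plan; but even with the correct move list in hand, both the geometric verification of each move and the reachability induction remain to be supplied.
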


The moves generating this order carry over from type $AIII$, except that every move but the seventh must involve a block of at least two entries with indices $i_1,\ldots,i_k$ at most equal to $m$ and must be made simultaneously with its mirror image involving the complementary indices $2m+1-i_k,\ldots,2m+1-i_1$.  Thus an application of the second move sends $(1-+12+-2)$ to $(13312442)$.  The seventh move is allowed to involve a symmetric set of indices (stable under the operation of replacing an index $i$ by $2m+1-i$), in which case it need not be performed simultaneously with any other move.  Thus one application of the seventh move sends  $(+-1122-+)$ to $(+-1212-+)$; another application of this move sends $(11223344)$ to $(12123434)$.  Again, all moves except the last three correspond to edges in the Bruhat graph.

In type $CI$ the rank $r(c)$ is given by the same formula as in type $CII$; the closed orbits are the ones whose clans have only signs, and all have dimension $(1/2)m(m-1)$.  The clan corresponding to the open orbit is $(1,\ldots,m,m,\ldots,1)$.  The criterion for $\mathcal O_c\le\mathcal O_d$ is the same as in the previous case.  In Wyser's list of moves the first, second, and seventh through tenth moves are now allowed to involve a single set of entries symmetric about the midpoint of the clan (and thus to change only the entries in this set); other moves have to be performed simultaneously with their mirror images, as in type $CII$.  Thus one application of the first move sends $(1+-1)$ to $(1221)$; an application of the second move sends $(1-+1)$ to $(1221)$.  We need to add four moves to the list to generate the order, each involving four indices symmetric about the midpoint: 

\begin{itemize}
\item replace $+11-$ by $1212$
\item replace $-11+$ by $1212$
\item replace $+-+-$ by $1212$
\item replace $-+-+$ by $1212$
\end{itemize}

Note that if we have instead say one block of indices $+11-$ {\sl not} symmetric about the midpoint together with its mirror image $+22-$ on the other side of the midpoint (e.g. with the clan $(+11-+22-)$), then we can replace $+11-$ by $1212$ and simultaneously $+22-$ by $3434$ without making use of the added moves.  Indeed, it is enough to replace $+11$ with $1+1$, together with the mirror image of this move, using the fifth move, and then $+-$ by a pair of equal numbers in both blocks, using the first move.  (in our example $(+11-+22-)$ the clan would become $(12123434)$.)  All moves but the last three in Wyser's list and the additional four correspond to edges in the Bruhat graph; all edges arise in this way.

\begin{example}
The Hasse diagram of the poset for type $CI$ with $m=2$ is depicted below.
\begin{center}
$\xymatrix{& & &1221 & & &\\& 1+-1\ar[rru]& &1212\ar[u]& &1-+1\ar[llu]& \\&  +11-\ar[u]\ar[rru]& &1122\ar[llu]\ar[u]\ar[rru]& &-11+\ar[u]\ar[llu]& \\ ++--\ar[ru]& &+-+-\ar[lu]\ar[ru]& &-+-+\ar[lu]\ar[ru]& & --++\ar[lu]}$
\end{center}
\end{example}

\subsection{Type $BD$}
\medskip
\noindent In type $BDI$ the rank $r(c)$ of a clan $c=(c_1\ldots,c_n)$ is given as follows.  Let $a(c)$ be the number of indices $s$ with $c_s=c_t\in\mathbb N$ for some $s<{n\over 2}<t<n+1-s$ and $b(c)$ be the number of indices $s$ with $c_s=c_t\in\mathbb N$ for some $s<{n\over2}<t\le n+1-s$.  Then we have
$$
r(c) = \begin{cases}
 (1/2)(\ell(c)-a(c))&\mbox { if }p+q \mbox { is odd}\\
 (1/2)(\ell(c)-a(c))& \mbox{ if }p,q\mbox{ are both even}\\ 
(1/2)(\ell(c) - b(c))&\mbox{ if } p,q\mbox{ are both odd} 
\end{cases}
$$
\noindent and the closed orbits are the ones whose clans have only signs, except that a single pair of equal numbers in the two middle entries is allowed if $p,q$ are both odd.  All closed orbits have dimension equal to that of the flag variety for $K$, as already noted above.  Defining $c(i)$ to be the number of pairs of equal numbers $c_s,c_t$ with $s\le i$ and the index $s$ counted in the term subtracted from $(1/2)\ell(c)$ in the formula for $r(c)$ for an orthosymmetric clan $c$, we have

\begin{theorem}[\cite{M23}]
With notation as above, $\mathcal  O_c\le\mathcal O_d$ if and only if the conditions of Theorem 3.4 hold and $c(i)\ge d(i)$ for all $i\le{n\over2}$.
\end{theorem}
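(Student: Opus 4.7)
The plan is to parallel the proofs of Theorem 3.4 (Wyser, type $AIII$) and Theorem 3.8 (type $CII$), treating necessity by semicontinuity of geometric invariants on $G/B$ and sufficiency by a combinatorial moves argument adapted to the orthosymmetric setting.

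For necessity, I would observe that each statistic appearing in Theorem 3.4 is computed by a semicontinuous function on the flag variety: $c(i;+) = \dim(V_i \cap P)$ and $c(i;-) = \dim(V_i \cap Q)$ are lower semicontinuous, while $c(i;j)$ is the rank of a natural linear map between subquotients and is upper semicontinuous. Degenerating a generic flag of $\mathcal O_d$ to a flag of $\mathcal O_c$ therefore forces the Theorem 3.4 inequalities. For the new invariant, I claim $c(i)$ coincides with the rank of the restriction of the ambient symmetric form $(\cdot,\cdot)$ to an auxiliary subspace built from $V_i \cap P$, $V_i \cap Q$, and their orthogonals, designed so that precisely the pairs counted by $a(c)$ (or by $b(c)$, in the $p,q$ both-odd subcase) contribute. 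Upper semicontinuity of rank then yields $c(i) \ge d(i)$ for $i \le n/2$.

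For sufficiency, I would produce a list of combinatorial moves extending those of Theorem 3.5, each either applied to a symmetric block of indices or performed simultaneously with its mirror image on the complementary block. The moves of type $AIII$ transport directly in symmetric or mirrored form; I expect additional moves involving a symmetric set of four indices about the midpoint (compare the extra moves added in type $CI$) together with further moves that create or destroy a pair straddling the two central positions $m, m+1$ in the both-odd subcase. Given orthosymmetric clans $c \ne d$ satisfying all the stated inequalities, induction on the rank difference $r(d) - r(c)$ reduces the problem to locating a single move applicable to $c$ whose output $c'$ still satisfies the inequalities with respect to $d$ and has $r(c') = r(c) + 1$.

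The main obstacle will be the bookkeeping surrounding the invariant $c(i)$ near the midpoint. The switch from $a(c)$ to $b(c)$ in the both-odd subcase signals a parity contribution not visible in types $AIII$ or $CII$, and the geometric interpretation of $c(i)$ has to accommodate a pair $c_s = c_t$ with $t = n+1-s$, which is absent in the symplectic cases. Verifying that the proposed moves change $c(i)$ by exactly the increment consistent with covering relations, and never inadvertently violate the parity condition, will require a careful subcase analysis. A secondary subtlety is that $S(O(p,\mathbb C) \times O(q,\mathbb C))$-orbits are finer than $K$-orbits by Remark 2.1, so one must take care to work consistently at the $K$-orbit level when translating between orbit closure data and the purely combinatorial clan data.
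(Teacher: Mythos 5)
Your overall architecture (semicontinuity of geometric invariants for necessity, a generating set of moves for sufficiency) is the standard one, and it is consistent with the treatment the paper points to in \cite{M23} (the paper itself only cites the proof and then describes the generating moves). But as written the proposal leaves both of the genuinely hard steps unproved. For necessity, the entire content of the new condition $c(i)\ge d(i)$ is the identification of a semicontinuous geometric quantity on the orthogonal flag variety that computes $c(i)$; you only assert the existence of ``an auxiliary subspace built from $V_i\cap P$, $V_i\cap Q$ and their orthogonals, designed so that precisely the pairs counted by $a(c)$ (or $b(c)$) contribute.'' No candidate subspace or form is specified, and the both-odd subcase --- where a pair $c_s=c_t$ with $t=n+1-s$ must be counted, unlike in types $AIII$ and $CII$ --- is exactly where such a construction is delicate; without an explicit invariant (and a check of which way it jumps under specialization) the inequality is not established. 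Incidentally, your semicontinuity labels are reversed: $\dim(V_i\cap P)$ is upper semicontinuous and rank is lower semicontinuous, which is what actually yields $c(i;+)\ge d(i;+)$ and $c(i;j)\le d(i;j)$ on the closure.

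For sufficiency, the proposed move set does not match the actual situation in type $BDI$. The four supplementary moves of type $CI$ (and analogues acting on a single block of four indices symmetric about the midpoint) are not available here, because patterns such as $+11-$ or $+-+-$ placed symmetrically about the midpoint are not orthosymmetric; as the paper notes right after the theorem, the order in type $BDI$ is generated by Wyser's $AIII$ moves subject to restrictions (the middle index may not be involved when $n$ is odd, and in type $D$ the eighth and ninth moves may not act on a single centrally symmetric block), with no supplementary moves added. Beyond this, the induction step --- given orthosymmetric clans $c<d$ in the combinatorial order, exhibit one admissible move from $c$ whose output still satisfies all inequalities relative to $d$ --- is the heart of the combinatorial argument and is left entirely open, as is the verification that each admissible move is actually realized by a closure relation in the orthogonal flag variety (not merely in the ambient type $AIII$ picture), which is needed to conclude that the chain of moves implies $\mathcal O_c\le\mathcal O_d$. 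So the proposal is a reasonable plan in outline, but with these gaps it does not yet constitute a proof.
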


Wyser's moves in type $AIII$ generate the order, except that they cannot involve the middle index in type $B$ \cite{M23}.  Thus for example we are not allowed to move from $(11+22)$ to $(112+2)$, since the latter clan is not orthosymmetric.  Also the four supplementary moves defined above are not allowed in this case, as they depend on starting from a skew-symmetric rather than orthosymmetric clan.   In type $D$, Wyser's eighth and ninth moves are not allowed to affect only the indices in a single pattern symmetric about the midpoint, as that would destroy the symmetry of the clan.  On the other hand, the seventh and tenth of Wyser's moves of indices symmetric about the midpoint. are allowed in this case. Thus  e.g. we are not allowed to move from $(1122)$ to $(1+-1)$, but we are allowed to move from $(1122)$ to $(1212)$..  All moves but the last three in Wyser's list correspond to edges in the Bruhat graph.

Finally, in type $DIII$ the rank $r(c)$ is given by the same formula as in type $BDI$.  The closed orbits are as usual the ones whose clans contain only signs; they all have dimension $(1/2)m(m-1)$.  The clan of the open orbit is $c_o(m,m)=(1,2,\ldots,2k,2k-1,2k,\ldots,1,2)$ if $m=2k$ is even and the same $m$-tuple with $+-$ inserted after the the first $2k$ if $m=2k+1$ is odd.  We denote by $\pm c_o(m,m)$ the clan $c_o(m,m)$ if $m$ is even and either $c_o(m,m)$ or the  (non-even) clan obtained from it by changing all the signs if $m$ is odd.  The criterion for $\mathcal O_c\le\mathcal O_d$ is the same as in type $BDI$.  Wyser's moves, modified as in type $CII$, generate the order, and all moves but the last three correspond to edges in the Bruhat graph \cite{M23}.  Here the last four of Wyser's moves are not allowed to involve a single block of indices symmetric about the midpoint, so that for instance we cannot move from $(1122)$ to $(1212)$, since the former clan is not an even skew-symmetric one.  More generally, an even skew-symmetric clan involving the pattern $1122$ among indices symmetric about the midpoint would be sent by this move to a non-even skew-symmetric clan, so the move is not allowed.  By the same token, the four supplementary moves, involving indices symmetric about the midpoint, are not allowed in this case, as they fail to send even skew-symmetric clans to even skew-symmetric clans.

\section{Criteria for smoothness and rational smoothness}

In this section we begin with a criterion of Kazhdan and Lusztig for rational smoothness of Schubert varieties (for general groups $G$) \cite{KL79,KL80}.  We then give a graph-theoretic criterion for rational smoothness due to Carrell and Peterson \cite{C94}.  Kazhdan and Lusztig's criterion was generalized to the symmetric setting by Lusztig and Vogan \cite{LV80}, while Brion gave a very general version of Carrell-Peterson's criterion for varieties with a torus action (which includes all symmetric varieties)\cite{Br99}. Specializing down to the case of classical groups $G$, we then define pattern avoidance in the symmetric setting and show how it can be used to characterize rational smoothness.  

Recall that an irreducible variety $X$ of dimension $d$ is {\sl rationally smooth} at $x\in X$ the \' etale cohomology with values in the values in the constant $\ell$-adic sheaf $\mathbb Q_\ell$ and support at $x$ is one-dimensional and concentrated in top dimension, so that 
\[
H_x^p(X,\mathbb Q_\ell)=\begin{cases}
0&\mbox{ if }i\ne 2d\\
\mathbb Q_\ell(-d)&\mbox { if }i=2d
\end{cases}
\]
\noindent If $X $is rationally smooth at all of its points we say that it is rationally smooth (without qualification).  If $X$ is a complex projective variety then McCrory has shown that $X$ is rationally smooth if and only if the ordinary cohomology $H^*(X)$ of $X$ over $\mathbb C$ admits Poincar\' e duality, so that its dimension in degree $i$ matches its dimension in degree $2d-i$ for all $i$ \cite{McC77}.  In particular, $X$ is rationally smooth whenever $X$ is smooth.  In \cite{KL79} Kazhdan and Lusztig define a polynomial $P_{x,w}$ in one variable $q$ with nonnegative integer coefficients for every $x,w$ in the Weyl group $W$.  This polynomial is 0 if $x\not\le w$ and 1 if $x=w$; in all other cases it has degree at most $(1/2)(\ell(w)-\ell(x)-1)$, with $\ell(w)$ the usual length function on $W$.  A criterion for Schubert varieties to be rationally smooth at particular points is then given by

\begin{theorem}[{\cite{KL79}}]
$X_w$ is rationally smooth at all points of $C_x$ if and only if the Kazhdan-Lusztig polynomial $P_{x,w}=1$.
\end{theorem}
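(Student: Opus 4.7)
The plan is to use the geometric interpretation of Kazhdan--Lusztig polynomials as Poincar\'e polynomials of stalks of the intersection cohomology sheaf of $X_w$. Concretely, I would invoke the result (conjectured in \cite{KL79} and established in \cite{KL80} via $\ell$-adic cohomology, and reproved by Beilinson--Bernstein and Brylinski--Kashiwara by $D$-module methods) that for any $x\le w$ and any point $p\in C_x$ one has
\[
P_{x,w}(q)\;=\;\sum_{i\ge 0}\dim\mathcal{H}^{2i}\bigl(\mathcal{IC}(X_w)_p\bigr)\,q^{i},
\]
where $\mathcal{IC}(X_w)$ is Deligne's intersection cohomology complex (normalized to restrict to the constant sheaf in degree $0$ on the smooth locus) and the odd stalk cohomology vanishes by the standard parity property for Schubert varieties.

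From there the proof splits into two easy implications. First I would show that rational smoothness of $X_w$ at $p$ is equivalent to $\mathcal{IC}(X_w)_p$ being concentrated in degree $0$ with one-dimensional fiber. The forward direction uses McCrory's local Poincar\'e duality characterization (cited in the paragraph before the theorem) and the defining support/cosupport conditions for IC; the reverse direction uses Poincar\'e--Verdier duality together with the inclusion of the smooth locus to identify local IC with local ordinary cohomology in this situation. Second, since $C_x$ is a single $B$-orbit and $\mathcal{IC}(X_w)$ is $B$-equivariant (indeed constructible with respect to the Schubert stratification), its stalks are constant along $C_x$, so rational smoothness at one point of $C_x$ is equivalent to rational smoothness at every point. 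Combining these two equivalences with the displayed formula gives $P_{x,w}=1$ if and only if $X_w$ is rationally smooth at all points of $C_x$.

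The main obstacle is the identification of $P_{x,w}$ with the local IC Poincar\'e polynomial itself: this is the deep content of the Kazhdan--Lusztig conjecture and is what ultimately makes the Hecke-algebraic recursion defining $P_{x,w}$ encode geometric information at all. If one wished to stay purely within the combinatorial framework of \cite{KL79} and avoid this black box, one would instead work directly with the Kazhdan--Lusztig recursion and the $R$-polynomials, compare the Poincar\'e series of the local ring of $X_w$ at a point of $C_x$ to the sum $\sum_{x\le y\le w} q^{\ell(y)-\ell(x)}P_{x,y}(q^{-1})P_{y,w}(q)$, and then show that the degree bound $\deg P_{x,w}\le (\ell(w)-\ell(x)-1)/2$ is attained with equality precisely when rational smoothness fails. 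In that combinatorial route the hardest step is the careful bookkeeping of the recursion to extract rational smoothness from the vanishing of a single coefficient, which is what forces one into the sheaf-theoretic language in practice.
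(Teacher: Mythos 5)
The paper itself gives no proof of this statement -- it is quoted directly from Kazhdan--Lusztig \cite{KL79,KL80} -- so the only question is whether your sketch stands on its own, and it has one genuine gap: the step asserting that rational smoothness of $X_w$ at a point $p\in C_x$ is equivalent to the stalk $\mathcal{IC}(X_w)_p$ being one-dimensional and concentrated in degree $0$. Rational smoothness at $p$ is, by definition (and by McCrory's duality criterion \cite{McC77}), a condition on the local cohomology $H^\bullet_p(X_w)$, i.e.\ on the stalk of the \emph{dualizing} complex at $p$; it is not a condition on the IC stalk, and the two are not interchangeable cell by cell. Triviality of the IC stalk along the single stratum $C_x$ says nothing a priori about the strata $C_y$, $x< y\le w$, which enter every punctured neighborhood of $p$, and conversely manifold-like local cohomology at the points of $C_x$ alone does not formally force the IC stalk there to be trivial (the general sheaf-theoretic equivalence ``rational homology manifold $\Leftrightarrow$ $\mathcal{IC}$ constant'' is valid on open sets, not stratum by stratum). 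What the sheaf theory actually yields -- and what the appendix of \cite{KL80} proves -- is an expression for $H^\bullet_p(X_w)$, $p\in C_x$, as a sum of contributions $q^{\ell(y)}P_{y,w}$ over \emph{all} $y\in[x,w]$, giving the equivalence: $X_w$ is rationally smooth along $C_x$ if and only if $P_{y,w}=1$ for every $y\in[x,w]$.

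To collapse that interval condition to the single condition $P_{x,w}=1$, as in the statement, you need the coefficientwise monotonicity $P_{x,w}\ge P_{y,w}$ for $x\le y\le w$ (due to Irving, later extended by Braden and MacPherson), which is a nontrivial input your argument never supplies; your equivariance remark only gives constancy of the stalks along one cell, not a comparison between different cells. Your closing ``combinatorial route'' in fact points at the correct local formula, but as you concede it is a program rather than a proof. A minor attribution point: the identification of $P_{x,w}$ with the local intersection cohomology Poincar\'e polynomial is proved in \cite{KL80} and does not depend on the Kazhdan--Lusztig conjecture on Verma module multiplicities (the Beilinson--Bernstein/Brylinski--Kashiwara theorem), so it is not quite ``the deep content of the Kazhdan--Lusztig conjecture,'' though it is certainly the deep geometric input here.
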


As the polynomials $P_{x,w}$ are defined by highly recursive formulae, however, we are led to ask for criteria that do not require computing them.  Let $\Gamma$ be the Bruhat graph of $W$.  For every $w\in W$ denote by $\Gamma(w)$ the induced subgraph of the Bruhat poset corresponding to the interval $[1,w]$.  Similarly for $x\le w$ let $\Gamma(x,w)$ be the induced subgraph corresponding to the interval $[x,w]$.  Then we have

\begin{theorem}[{\cite{C94}}]
$X_w$ is rationally smooth (everywhere) if and only if $\Gamma(w)$ is regular, or if and only if all of its vertices have degree $\ell(w)$.
\end{theorem}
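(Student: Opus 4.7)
The plan is to translate rational smoothness of $X_w$ into a purely combinatorial condition on $\Gamma(w)$ by exploiting the action of the maximal torus $T\subset B$ on $X_w$. The fixed-point set of $T$ on $G/B$ is $\{p_v=vB : v\in W\}$, and $X_w^T=\{p_v : v\le w\}$. For each positive root $\alpha$ and each $v\le w$ with $s_\alpha v\le w$, the rank-one subgroup $\langle U_\alpha,U_{-\alpha}\rangle$ acts on $p_v$ producing a $T$-stable rational curve inside $X_w$ joining $p_v$ to $p_{s_\alpha v}$, and a standard argument shows that every $T$-invariant irreducible curve in $X_w$ arises this way. Consequently the degree $d_v$ of $v$ in $\Gamma(w)$ equals both the number of $T$-invariant curves through $p_v$ and the number of $T$-weight lines in the Zariski tangent space $T_{p_v}X_w$.

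The core ingredient is a criterion due to Carrell and Peterson (in the form sharpened by Brion and others): for a complete $T$-variety with finitely many fixed points and finitely many $T$-invariant curves, $X$ is rationally smooth at a fixed point $p$ if and only if the number of $T$-invariant curves through $p$ equals $\dim X$. The Schubert variety $X_w$ satisfies these hypotheses, and applying the criterion at each $p_v$ gives that rational smoothness of $X_w$ at $p_v$ is equivalent to $d_v=\ell(w)$. Since $B$ acts on $G/B$ by algebraic automorphisms preserving $X_w$, and each Schubert cell $C_v$ is a single $B$-orbit containing $p_v$, rational smoothness at $p_v$ propagates to every point of $C_v$; therefore $X_w$ is rationally smooth everywhere if and only if $d_v=\ell(w)$ for every $v\le w$. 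The final equivalence with regularity of $\Gamma(w)$ is elementary: the vertex $w$ has degree exactly $\ell(w)$, because the reflections $t$ with $\ell(wt)<\ell(w)$ are the $\ell(w)$ inversions of $w$ and each such $t$ satisfies $wt\le w$ by the strong exchange property, so any regular structure on $\Gamma(w)$ forces the common degree to be $\ell(w)$.

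The main obstacle is establishing the Carrell--Peterson/Brion criterion itself. The natural approach proceeds through $T$-equivariant intersection cohomology: one shows that $X$ is rationally smooth at $p$ exactly when the localization of $H^*_T(X)$ at $p$ has the structure expected of a smooth variety of the same dimension, which by GKM-type localization can be read off from the $T$-weights at $p$ provided $X$ is equivariantly formal. Equivariant formality of $X_w$ follows from the Bia{\l}ynicki--Birula cell decomposition, which shows that $H^*(X_w)$ is concentrated in even degree. An alternative, more combinatorial route computes a local Poincar\'e polynomial at $p_v$ essentially of the form $\sum_{v\le u\le w}t^{2(\ell(u)-\ell(v))}$ and characterizes rational smoothness at $p_v$ as palindromicity of this polynomial, after which an inductive argument using covering relations identifies palindromicity with the degree condition $d_v=\ell(w)$. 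Either way, the passage from the cohomological definition of rational smoothness to the numerical local weight count is the hard analytic content, while the translation into the language of $\Gamma(w)$ is the geometric dictionary outlined above.
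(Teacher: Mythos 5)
The paper does not actually prove this statement—it is quoted from Carrell \cite{C94}—so your proposal must stand on its own, and it has a genuine gap at its core. The ``criterion'' you invoke, that a $T$-variety with finitely many fixed points and invariant curves is rationally smooth \emph{at} a fixed point $p$ if and only if the number of $T$-invariant curves through $p$ equals $\dim X$, is not a theorem: only the forward implication holds (rational smoothness forces the count to equal $\dim X$; in general the count is at least $\dim X$). This is exactly how the paper states Brion's result (Theorem 4.6: ``only if \dots in general, this degree is always at least $\ell(d)-\ell(c)$''), and it explicitly describes the curve/degree count as ``a necessary (but not in general sufficient) condition.'' Your pointwise equivalence would immediately give Polo's theorem in every type—rational smoothness of $X_w$ detected by the degree of the single bottom vertex $p_e$, which is an attractive fixed point—whereas the paper records (after Theorem 4.2, citing \cite{P94}) that this holds only in types $A$ and $C$; in other types the degree of $e$ in $\Gamma(w)$ can equal $\ell(w)$ while $X_w$ is rationally singular. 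Likewise the local statement (Theorem 4.3) demands the degree condition at \emph{all} $y\in[x,w]$, not just at $x$. So your argument, applied fixed point by fixed point, proves only the easy direction (rationally smooth $\Rightarrow$ every vertex of $\Gamma(w)$ has degree $\ell(w)$); the hard direction, that regularity of $\Gamma(w)$ implies rational smoothness, cannot be obtained this way.

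The correct route for the hard direction is genuinely global: Carrell and Peterson show that regularity of $\Gamma(w)$ forces the Poincar\'e polynomial $P_w(t)=\sum_{x\le w}t^{\ell(x)}$ of the whole interval to be palindromic (a combinatorial argument on the Bruhat graph using Deodhar's inequality), and palindromicity yields $P_{x,w}=1$ for all $x\le w$ via Kazhdan--Lusztig theory (nonnegativity of the coefficients and the intersection cohomology interpretation), whence rational smoothness by Theorem 4.1. Your final paragraph gestures at a ``local Poincar\'e polynomial'' whose palindromicity at $p_v$ is matched inductively with the degree condition at $v$; but no such pointwise equivalence holds (the paper's Remark 4.2 is a warning in exactly this direction), so this sketch does not repair the gap. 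A further minor inaccuracy: the degree $d_v$ equals the number of $T$-invariant curves through $p_v$, but it need not equal the number of $T$-weight lines in the Zariski tangent space $T_{p_v}X_w$, which can be strictly larger at singular points.
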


\noindent In types $A$ and $C$ (but not in general) it is enough just to compute the degree of one vertex; $X_w$ is rationally smooth if and only if the bottom vertex in $\Gamma(w)$ has degree $\ell(w)$ \cite{P94}.  Actually the graph-theoretic criterion, if imposed on all vertices, holds in general locally.

\begin{theorem}[{\cite{J79,C94}}]
In any type, for $x\le w$, the variety $X_w$ is rationally smooth along $C_x$ if and only if for all $y\in[x,w]$ the number of edges in $\Gamma[x,w]$ joining $y$ to a higher vertex is $\ell(w)-\ell(y)$.
\end{theorem}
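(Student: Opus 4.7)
The plan is to reduce Theorem 4.3 to the pointwise equivariant form of the Carrell--Peterson criterion (in the sense of Brion), applied at each $T$-fixed point $yB$ with $y \in [x,w]$. First, since $B$ acts transitively on $C_x$ by automorphisms of $X_w$, rational smoothness of $X_w$ along $C_x$ is equivalent to rational smoothness at the unique $T$-fixed point $xB \in C_x$. Rational smoothness is an open condition, so if $X_w$ is rationally smooth at $xB$ then there is an open neighborhood $U \subseteq X_w$ of $xB$ on which $X_w$ is rationally smooth; since $xB \in \overline{C_y}$ exactly when $y \in [x,w]$, $U$ meets every such $C_y$, and $B$-equivariance then forces $X_w$ to be rationally smooth on all of $C_y$, hence in particular at $yB$. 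The converse is immediate by taking $y=x$. Thus $X_w$ is rationally smooth along $C_x$ if and only if it is rationally smooth at $yB$ for every $y \in [x,w]$.

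Next I would apply Brion's pointwise criterion at each fixed point $yB$: rational smoothness of $X_w$ at $yB$ is equivalent to the number of one-dimensional $T$-orbits containing $yB$ in their closure being exactly $\dim X_w = \ell(w)$. The auxiliary non-degeneracy of tangent weights is automatic here, since distinct reflections at $y$ correspond to distinct positive roots and hence to distinct $T$-characters. These one-dimensional $T$-orbits correspond bijectively to edges of $\Gamma(w)$ at $y$; among these, exactly $\ell(y)$ are down-edges (one per inversion of $y$), so the criterion reduces to the count of up-edges from $y$ in $\Gamma(w)$ being $\ell(w)-\ell(y)$.

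Finally, for $y \in [x,w]$, any up-edge $(y,z)$ in $\Gamma(w)$ satisfies $z > y$ and $z \le w$, and $z \ge y \ge x$ automatically, so $z \in [x,w]$. Hence up-edges from $y$ in $\Gamma(w)$ coincide with up-edges from $y$ in the induced subgraph $\Gamma[x,w]$, and combining with the previous two paragraphs gives the theorem.

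The main obstacle is the pointwise criterion in the second paragraph. The classical Carrell--Peterson argument uses global Poincar\'e duality on the projective variety $X_w$ and is not obviously local, so to extract the statement at a single $T$-fixed point one must invoke the equivariant intersection cohomology framework (Goresky--Kottwitz--MacPherson moment graphs and Brion's extension to $T$-varieties with finitely many fixed and one-dimensional orbits), in which rational smoothness at $yB$ translates directly to the local moment graph at $y$ having the expected valence. Once this pointwise formulation is granted, Theorem 4.3 follows formally from openness of the rationally smooth locus and $B$-equivariance, as sketched.
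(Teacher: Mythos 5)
Your first and third steps are fine: rational smoothness along $C_x$ is indeed equivalent to rational smoothness at the $T$-fixed points $yB$ for all $y\in[x,w]$ (openness plus $B$-stability of the rationally smooth locus), and an up-edge of $\Gamma(w)$ emanating from $y\ge x$ automatically has its upper endpoint in $[x,w]$, so $\Gamma(w)$ and $\Gamma[x,w]$ have the same up-edges at $y$. The genuine gap is the second step, which you yourself flag as the main obstacle and then assert via GKM/Brion theory: the claimed pointwise criterion ``$X_w$ is rationally smooth at $yB$ if and only if the number of $T$-curves in $X_w$ through $yB$ equals $\ell(w)$'' is not a theorem, and in fact it is false in general. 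What Brion's and Carrell--Peterson's results give at a single fixed point is only the necessity direction: rational smoothness at $yB$ forces the curve count to be exactly $\ell(w)$ (the count is always $\ge\ell(w)$). If sufficiency at a single point held, you could apply it at $y=e$, the bottom vertex, which lies in every Schubert variety and at which rational smoothness is equivalent to rational smoothness of all of $X_w$; you would conclude that $X_w$ is rationally smooth as soon as the bottom vertex of $\Gamma(w)$ has degree $\ell(w)$. But, as recalled in the paper immediately after Theorem 4.2 (Polo's result), that one-vertex test is valid in types $A$ and $C$ only, not in general. So the moment-graph framework cannot ``translate rational smoothness at $yB$ directly to the local valence at $y$''; the failure of that translation is exactly why the theorem must impose the degree condition at \emph{every} $y\in[x,w]$ simultaneously, and why your argument collapses to a false strengthening of the statement.

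The paper itself cites the result to Jantzen and Carrell--Peterson rather than proving it, and the known proofs take a different route than the one you propose: by Theorem 4.1 together with the openness/$B$-stability argument you already made, rational smoothness of $X_w$ along $C_x$ is equivalent to $P_{y,w}=1$ for all $y\in[x,w]$, and one then proves, using Deodhar's inequality relating Kazhdan--Lusztig polynomials to the number of reflections $t$ with $y<yt\le w$ (equivalently, via palindromicity arguments for the Poincar\'e polynomials of the intervals), that $P_{y,w}=1$ for all $y\in[x,w]$ holds if and only if every such $y$ has exactly $\ell(w)-\ell(y)$ up-edges. That equivalence is intrinsically a statement about the whole interval $[x,w]$, not a fixed-point-by-fixed-point criterion, so your reduction cannot be repaired simply by a better reference; it needs this genuinely different (Kazhdan--Lusztig-theoretic) input.
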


\noindent Another criterion refers only to the poset structure.  

\begin{theorem}[{\cite{C94}}]
$X_w$ is rationally smooth if and only if the Poincar\' e polynomial $P_w(t) =\sum_{x\le w} t^{\ell(x)}$ is palindromic.  
\end{theorem}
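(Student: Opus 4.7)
The plan is to deduce this directly from McCrory's theorem, already recalled in the excerpt: for a complex projective variety $X$ of (complex) dimension $d$, $X$ is rationally smooth if and only if $H^\ast(X,\mathbb{C})$ admits Poincar\'e duality, i.e.\ $\dim H^i(X) = \dim H^{2d-i}(X)$ for all $i$. Since $X_w$ is a complex projective variety of dimension $\ell(w)$, it suffices to translate Poincar\'e duality for $X_w$ into palindromicity of $P_w(t)$ and then chain the two equivalences.

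First I would invoke the Bruhat cell decomposition $X_w = \bigsqcup_{x\le w} C_x$, with each Schubert cell $C_x$ isomorphic to an affine space of complex dimension $\ell(x)$. Because every cell has even real dimension, a standard cellular argument (going back to Borel's computation of $H^\ast(G/B)$, restricted to $X_w$) shows that $H^\ast(X_w,\mathbb{Q})$ vanishes in odd degrees and has Betti numbers
\[
b_{2i}(X_w) = \#\{x\le w : \ell(x)=i\}, \qquad b_{2i+1}(X_w)=0.
\]
Consequently the topological Poincar\'e polynomial of $X_w$ is exactly $P_w(t^2)$.

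Since $\dim_{\mathbb C} X_w = \ell(w)$, Poincar\'e duality for $X_w$ reads $b_{2i}(X_w)=b_{2\ell(w)-2i}(X_w)$ for every $i$, the odd Betti numbers being automatically zero on both sides. Through the previous identification this condition is precisely the assertion that the coefficient of $t^i$ in $P_w(t)$ equals the coefficient of $t^{\ell(w)-i}$, which is the definition of $P_w(t)$ being palindromic. Chaining the equivalences gives $X_w$ rationally smooth $\iff$ $H^\ast(X_w)$ satisfies Poincar\'e duality $\iff$ $P_w(t)$ palindromic.

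The only step requiring genuine verification is the identification of the Betti numbers with the rank-generating function of $[1,w]$; this rests on the classical fact that the Schubert classes $[X_x]$ with $x\le w$ form a basis of $H_\ast(X_w,\mathbb{Q})$, which in turn follows from the paving of $X_w$ by the even-real-dimensional affine cells $C_x$. No other obstacle arises, so once this cellular input is in hand the theorem follows immediately from McCrory's criterion.
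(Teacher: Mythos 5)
Your cellular computation is fine: the Bruhat paving of $X_w$ by the cells $C_x$, $x\le w$, does give $b_{2i}(X_w)=\#\{x\le w:\ell(x)=i\}$ and vanishing odd cohomology, so the topological Poincar\'e polynomial is $P_w(t^2)$, and the implication ``rationally smooth $\Rightarrow$ $P_w$ palindromic'' follows. (The paper itself offers no proof here; it simply cites Carrell \cite{C94}.) The gap is in the converse, which is the hard direction of the theorem. McCrory's criterion is not the purely numerical statement that $\dim H^i=\dim H^{2d-i}$ for all $i$; what is required is that Poincar\'e duality hold as a duality, i.e.\ that cap product with the fundamental class $[X]$ induce isomorphisms $H^i(X,\mathbb Q)\to H_{2d-i}(X,\mathbb Q)$ (equivalently, that $X$ be a rational homology manifold, which is a \emph{local} condition). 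The one-sentence paraphrase in the paper elides this, and mere symmetry of Betti numbers cannot detect local failure of duality: for instance, a quintic threefold with a single ordinary double point (in general position, so the defect vanishes) has symmetric Betti numbers, yet it is not rationally smooth because the link of the node is $S^2\times S^3$, which is not a rational homology sphere. So the step ``palindromic $\Rightarrow$ Poincar\'e duality for $X_w$'' is exactly where the argument fails; nothing in your write-up uses any property of Schubert varieties beyond the cell count, and the implication is false at that level of generality.

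To close the gap one needs genuinely more input, which is why the result is attributed to Carrell--Peterson rather than being a corollary of McCrory. The standard arguments use either Kazhdan--Lusztig theory -- compare $P_w(q)=\sum_{x\le w}q^{\ell(x)}$ with the intersection cohomology Poincar\'e polynomial $\sum_{x\le w}P_{x,w}(q)\,q^{\ell(x)}$, which is always palindromic, and exploit nonnegativity, the constant term $1$, the degree bound $\deg P_{x,w}\le\frac12(\ell(w)-\ell(x)-1)$ and monotonicity to force $P_{x,w}=1$ for all $x\le w$, then invoke Theorem 4.1 -- or the Bruhat-graph/Deodhar inequality circle of ideas underlying Theorem 4.2. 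If you want to salvage your approach, you must either prove duality is implemented by cap product for $X_w$ under the palindromicity hypothesis (this is essentially the theorem itself) or switch to one of these routes.
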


\begin{remark}
More recently Akyildiz and Carrell have sharpened this result, showing in \cite{AC12} that $P_w(t)$ is a product of polynomials of the form $1+t+\ldots+t^m$ for various $m$ whenever $X_w$ is rationally smooth.
\end{remark}

\noindent There is no analogous poset condition for $X_w$ to be rationally smooth along $C_x$.

\begin{remark}
The condition in \cite[Theorem 6.2.10]{BL00} that the graph $\Gamma(x,w)$ be regular is {\sl not} in fact necessary for $X_w$ to be rationally smooth along $C_x$.  For example, the full flag variety in type $A_3$, corresponding to the long element $[4321]$, is both smooth and rationally smooth at all of its points, including those lying on the orbit indexed by $[2143]$; but the degree of the vertex $[4231]$ in the Bruhat graph $\Gamma_{[2143],[4321]}$ is 5 while the length difference between $4321$ and $2143$ is 4.
\end{remark}

\noindent We also have

\begin{theorem}[{\cite{CK03}}]
For $G$ simply laced (so that all simple roots have the same length), $X_w$ is smooth at all points of $C_x$ if and only if it is rationally smooth at all such points.
\end{theorem}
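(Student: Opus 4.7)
The easy direction ``smooth implies rationally smooth'' holds pointwise in any variety and requires no hypothesis on $G$. For the converse, first note that $C_x = BxB/B$ is a single $B$-orbit and that both the smooth and rationally smooth loci of $X_w$ are $B$-stable open subsets, so each property holds at every point of $C_x$ iff it holds at the unique $T$-fixed point $xB \in C_x$. The task reduces to the pointwise statement: in simply-laced types, if $X_w$ is rationally smooth at $xB$ then $X_w$ is smooth at $xB$.

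The plan is to compare three integers attached to the pair $x \le w$:
\begin{enumerate}
\item the cardinality $|E(x,w)|$ of $E(x,w) = \{t \in W : t \text{ a reflection},\ tx \ne x,\ tx \le w\}$, equivalently the number of one-dimensional $T$-orbits in $X_w$ having $xB$ in their closure;
\item the Zariski tangent space dimension $\dim T_{xB} X_w$;
\item the dimension $\ell(w) = \dim X_w$.
\end{enumerate}
One always has $|E(x,w)| \le \dim T_{xB} X_w$; smoothness at $xB$ is the equality $\dim T_{xB} X_w = \ell(w)$; and Brion's torus-equivariant version of the Carrell--Peterson criterion, specialized to the $T$-fixed point $xB$, gives that rational smoothness at $xB$ is equivalent to $|E(x,w)| = \ell(w)$. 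The simply-laced hypothesis enters to upgrade the first inequality to an equality: the $T$-weights of $T_{xB}G/B$ are the distinct roots of $x\Phi^-$ (so any $T$-stable subspace of $T_{xB}G/B$ inherits a multiplicity-free weight decomposition), and in the simply-laced setting every weight of $T_{xB}X_w$ is the tangent weight of a genuine $T$-stable curve in $X_w$. Chaining these facts shows that rational smoothness at $xB$ forces $\dim T_{xB}X_w = |E(x,w)| = \ell(w) = \dim X_w$, i.e.\ smoothness at $xB$.

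The main obstacle is establishing the simply-laced coincidence $\dim T_{xB}X_w = |E(x,w)|$, which is the technical heart of Carrell--Kuttler's argument and refines earlier results of Polo. In types with two root lengths this identity can fail: a short-root contribution to the local ideal of $X_w$ at $xB$ may share a $T$-weight with a long-root contribution, so the tangent space acquires a direction that is not integrated by any $T$-stable curve and $|E(x,w)|$ falls strictly below $\dim T_{xB}X_w$. I would prove the simply-laced equality by working in the opposite big cell around $xB$, an affine open of $G/B$ whose coordinate weights are indexed bijectively by $x\Phi^-$, then writing Polo-type linear equations for the tangent cone of $X_w$ whose linear parts are indexed by the reflections $t \notin E(x,w)$, and observing that in the absence of length differences these linear parts are automatically independent and carve out exactly the weight spaces corresponding to $E(x,w)$.
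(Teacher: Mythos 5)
The paper itself gives no proof of this statement: it is quoted from Carrell--Kuttler \cite{CK03}, so the comparison must be with the argument in that reference. Your skeleton is the right one as far as it goes: the reduction to the $T$-fixed point $xB$ via $B$-stability and openness of both loci is fine; the inequality $|E(x,w)|\le\dim T_{xB}X_w$ is correct, since the weights of $T_{xB}G/B$ are the pairwise distinct roots in $x\Phi^-$ and each $T$-curve through $xB$ contributes its tangent line; and rational smoothness at $xB$ does force $|E(x,w)|=\ell(w)$ (Brion's necessary condition). Note, though, that the equivalence you assert in item (2) is stronger than what is true: sufficiency of the count at the single vertex $x$ requires the degree condition at every $y\in[x,w]$ (Theorem 4.3), and even for $x=e$ the one-vertex test is valid only in types $A$ and $C$ (see the sentence following Theorem 4.2, citing \cite{P94}). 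Since your argument only uses the necessary direction, this is harmless.

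The genuine gap is the step you yourself call the technical heart: the claim that in simply-laced types $\dim T_{xB}X_w=|E(x,w)|$, i.e.\ that every tangent weight at $xB$ is the weight of a $T$-curve in $X_w$. You assert this unconditionally, with no use of the rational-smoothness hypothesis, and your justification (``Polo-type linear equations whose linear parts are automatically independent in the absence of length differences'') is a restatement of the desired conclusion, not an argument: you do not explain how the equations are produced, why each reflection $t\notin E(x,w)$ contributes a nonzero linear part, or why unequal root lengths are the only possible obstruction. Your suggested mechanism for the non-simply-laced failure --- a short-root weight coinciding with a long-root weight in the tangent space --- cannot literally occur, since the weights of $T_{xB}G/B$ are pairwise distinct in every type; the $B_2$ failure comes from an ideal generator whose linear part vanishes, which is exactly the phenomenon your sketch would need to exclude and does not. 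Unconditionally, the equality you want is a type-$A$ theorem (Lakshmibai--Seshadri, underlying Deodhar's conjecture as proved in \cite{P94}); nothing of the sort is available off the shelf in types $D$ and $E$, and if it were, smoothness at $e$ would be equivalent to the purely combinatorial count $\#\{t: t\le w\}=\ell(w)$ in every simply-laced group --- precisely the one-vertex test that the paper warns is not valid beyond types $A$ and $C$. What Carrell--Kuttler actually do is quite different and genuinely geometric: they introduce the Peterson translate $\tau_C(X,x)$, a limit of tangent spaces along a $T$-stable curve $C$ through $x$, show it is an $\ell(w)$-dimensional $T$-stable subspace of $T_{xB}X_w$ when the variety is generically smooth along $C$, and run an induction along $T$-curves in which both rational smoothness (through the curve count) and the simply-laced hypothesis (through a weight/$\mathfrak{sl}_2$ analysis of the translate) enter essentially. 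Without an argument at that level, or an explicit tangent-space computation valid in types $D$ and $E$, your outline does not prove the theorem.
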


\noindent This last result fails in the non-simply laced case, in fact already in type $B_2$.

In the symmetric variety setting analogues of the Kazhdan-Lusztig polynomials have been defined by Lusztig and Vogan.  They have shown that a symmetric variety $X=\overline{\mathcal O}$ is rationally smooth along a $K$-orbit $\mathcal O'$ if and only if the polynomial $P_{\mathcal O',\mathcal O}$ attached to the orbits $\mathcal O',\mathcal O$ (and the trivial local system on each one) is the constant function 1, while certain other Lusztig-Vogan polynomials are 0 \cite{LV83,V83}. The graph-theoretic criteria of the previous paragraph have been generalized to a necessary (but not in general sufficient) condition for rational smoothness of the variety $X$.  This was done first by Springer, using the degrees of minimal vertices lying below a given one in the Bruhat graph, if the rank of $G$ equals that of $K$ \cite{S92}.  It was then generalized by Brion to a necessary condition for rational smoothness on the degree of any vertex conjugate under the $W$-action defined in \cite{RS90} to a minimal vertex \cite{S92,Br99}.

\begin{definition}
Let $c$ be a vertex lying below $d$ in the Bruhat graph.  The Bruhat graph $\Gamma(c,d)$ is defined as in the Schubert variety case, using the induced subgroup corresponding to the interval $[c,d]$.  
\end{definition}

\begin{lemma}[{\cite[\S2]{RS90}}]
There is a natural $W$ action on the vertices of the Bruhat graph.
\end{lemma}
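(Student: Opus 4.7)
The plan is to construct the $W$-action as the \emph{cross action} of Richardson-Springer, defined using minimal parabolic subgroups. For each simple reflection $s$ with corresponding minimal parabolic $P_s\supset B$, consider the projection $\pi_s\colon G/B\to G/P_s$, whose fibers are projective lines. Given a $K$-orbit $\mathcal O$ in $G/B$, examine the $P_s$-saturation $P_s\cdot \mathcal O$, which is a union of finitely many $K$-orbits all mapping to $\pi_s(\mathcal O)$ in $G/P_s$. Using the classification, worked out in \cite{RS90}, of the possible configurations for the $K$-action on a single $\mathbb P^1$-fiber of $\pi_s$, define $s\times \mathcal O$ to be either $\mathcal O$ itself (when $P_s$ stabilizes $\mathcal O$) or the unique $K$-orbit in the same fiber having the same dimension as $\mathcal O$.

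First I would verify that $s\times \mathcal O$ is well-defined and is an involution $s\times(s\times \mathcal O)=\mathcal O$, by going through the handful of configurations that arise. Next, the main point is that the operators $s\times(-)$ satisfy the braid relations of $W$: this reduces, exactly as in the Weyl group case, to a rank-two check inside the subgroup generated by $P_s$ and $P_t$ for each pair $s,t$ of simple reflections. Once the braid relations are established, the assignment $s\mapsto (\mathcal O\mapsto s\times \mathcal O)$ extends uniquely to a well-defined group action of $W$ on the finite set of $K$-orbits, and hence on the vertices of the Bruhat graph.

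For the classical types considered in this paper one can also give the cross action combinatorially on the parameter sets, which provides an independent verification. On involutions (types $AI$ and $AII$) it is twisted conjugation $s\times \pi=s\pi s$; on clans (type $AIII$) the simple reflection $s_i$ acts by exchanging the entries in positions $i$ and $i+1$; and on (skew-)symmetric, orthosymmetric, and even skew-symmetric clans (types $CI, CII, BDI, DIII$) one uses the analogous permutation action with the constraint that a reflection $s_i$ with $i<m$ must be applied simultaneously with its mirror image $s_{n-i}$ about the midpoint. In each case one checks by inspection that the relevant symmetry and parity conditions are preserved and that the axioms of a group action hold.

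The main obstacle is the braid relation verification on the geometric side. Although the reduction to rank two is standard, the detailed case analysis of $K$-orbits in a rank-two flag variety must be carried out for each possible rank-two subsystem $A_1\times A_1$, $A_2$, or $B_2$, and requires careful bookkeeping of $P_s\cap K$ and $P_t\cap K$. Fortunately, for the classical cases relevant here the combinatorial descriptions bypass this issue: the braid relations on the parameter sets reduce to elementary identities for adjacent transpositions acting on sequences (or for twisted conjugation by adjacent transpositions on involutions), which can be checked directly.
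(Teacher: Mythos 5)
There is a genuine gap: the geometric definition you give of the cross action is not the Richardson--Springer action, and it does not even reproduce the action described in the paper's subsequent Remark. The $W$-action of \cite[\S2]{RS90} does \emph{not} preserve orbit dimension: for a simple reflection $s$ that is \emph{complex} for $\mathcal O$, the saturation $P_s\cdot\mathcal O$ consists of exactly two $K$-orbits whose dimensions differ by one, and $s\times\mathcal O$ is the other one. Concretely, in type $AIII$ the Remark's action sends the clan $(+11)$ to $(1+1)$ under $s_1$, and these orbits have different dimensions; under your rule (``$\mathcal O$ itself when $P_s$ stabilizes $\mathcal O$, otherwise the unique orbit in the same fiber of the same dimension'') the element $s_1\times(+11)$ is simply undefined, since no same-dimensional orbit exists in the fiber and $P_{s_1}$ does not stabilize the orbit. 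The rule also fails for noncompact imaginary roots of type II, where $P_s\cdot\mathcal O\supsetneq\mathcal O$ yet $s\times\mathcal O=\mathcal O$. So both well-definedness and the subsequent braid-relation program rest on a faulty definition. Moreover, the braid-relation verification you anticipate is unnecessary in the source: Richardson--Springer define the action directly on the parameter set $V\cong\{v\in G: v^{-1}\iota(v)\in N(T)\}$ modulo $T$, with $w\times v$ given by right multiplication by a representative of $w$ in $N(T)$; the group axioms are then automatic, which is why the paper can simply cite \cite[\S2]{RS90} and record the combinatorial description afterwards.

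Your combinatorial fallback is closer to the mark (conjugation $s\pi s$ on involutions, coordinate transpositions on clans, mirrored transpositions in the symplectic and orthogonal cases), but it is incomplete as a proof of the lemma: in types $C$ and $BD$ the Weyl group is the hyperoctahedral group, and you omit the sign-change generators entirely (the paper's Remark specifies that these flip a sign entry together with its mirror and act trivially on number entries); and, more importantly, exhibiting a combinatorial action on clans does not by itself show that it is the \emph{natural} (geometric) action on orbit closures unless you prove the compatibility with the orbit parametrization, which is exactly what the citation to \cite{RS90} supplies.
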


\begin{remark}[{\cite[\S10]{RS90}}]
This action is given by conjugation on involutions.  On clans in type $AIII$, coordinate permutations act on clans in the obvious way, by permuting their coordinates.  In the symplectic and orthogonal cases, coordinate permutations act on the entries to the left of the midpoint by permuting these coordinates; they then simultaneously permute the coordinates to the right of the midpoint so as to maintain the symmetry or skew-symmetry of the clan.  In these cases, coordinate sign changes change the corresponding entry in the clan if the corresponding coordinate in the clan is a sign. They then simultaneously change the sign of the corresponding coordinate on the other side of the midpoint.  They act trivially if the corresponding coordinate in the clan is a number.
\end{remark}

\begin{theorem}[{\cite[Theorem 2.5]{Br99}}]
 $X_d$ is rationally smooth at $\mathcal O_c$ only if the degree of $c$ in the graph $\Gamma(c,d)$ equals the rank difference $\ell(d)-\ell(c)$, for all vertices $c$ that are $W$-conjugate to a minimal vertex; in general, this degree is always at least $\ell(d)-\ell(c)$.
 \end{theorem}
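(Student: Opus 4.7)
The plan is to deduce this from Brion's general criterion for rational smoothness of a $T$-variety at a $T$-fixed point, applied to $X_d$ under the action of a maximal torus $T$ of $K$ (hence of $G$). The first step is to observe that a vertex $c$ is $W$-conjugate to a minimal vertex precisely when $\mathcal{O}_c$ contains a $T$-fixed point: closed $K$-orbits in $G/B$ are partial flag varieties of Levi subgroups and meet $(G/B)^T$, while the Richardson--Springer $W$-action of Lemma 4.1 sends $T$-fixed points to $T$-fixed points via suitable root-subgroup translates. Fix such a $c$ and a point $x\in\mathcal{O}_c\cap(G/B)^T$.

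Since rational smoothness is $K$-invariant, $X_d$ is rationally smooth along $\mathcal O_c$ if and only if it is rationally smooth at $x$. To apply Brion's theorem I would slice: choose a $T$-stable transverse slice $S$ to $\mathcal O_c$ at $x$, so that $X_d$ is rationally smooth at $x$ exactly when $S\cap X_d$ is rationally smooth at $x$, and $\dim(S\cap X_d)=\dim X_d-\dim\mathcal O_c=\ell(d)-\ell(c)$ since the rank function is the shifted dimension.

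I would then invoke Brion's theorem in its torus-action form: for a normal $T$-variety with isolated $T$-fixed points and finitely many $T$-stable curves, rational smoothness at a $T$-fixed point forces the number of $T$-stable curves through that point to equal the dimension of the variety, while in general this count is bounded below by the dimension. Applying this to $S\cap X_d$ at $x$, the statement reduces to asserting that the number of $T$-stable curves in $X_d$ through $x$ transverse to $\mathcal O_c$ is at least $\ell(d)-\ell(c)$, with equality necessary for rational smoothness.

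The main obstacle is identifying these $T$-stable curves with the edges of $\Gamma(c,d)$ incident to $c$. Each $T$-stable curve through $x$ in $G/B$ is the closure of a one-dimensional root-subgroup orbit joining $x$ to another $T$-fixed point lying in some orbit $\mathcal O_{c'}$, and by the Richardson--Springer cross-action formalism this amounts to a root reflection producing an edge between $c$ and $c'$ in the Bruhat graph; the curve lies in $X_d$ and is transverse to $\mathcal O_c$ exactly when $c'\in[c,d]$ strictly exceeds $c$ in rank. Carefully verifying this bijection type by type, using the explicit edge descriptions of Section~3, is the heart of the argument; once in place, Brion's inequality combined with the dimension count of the slice yields the stated bound.
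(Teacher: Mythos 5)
The paper itself offers no argument for this statement: it is quoted directly from Brion \cite[Theorem 2.5]{Br99} (with Springer's equal-rank precursor \cite{S92}), so there is no internal proof to compare yours with. Your outline is in substance a reconstruction of the cited source's own method --- reduce by $K$-invariance to a $T$-fixed point $x\in\mathcal O_c$, pass to a $T$-stable attractive slice, and count $T$-stable curves through $x$ --- so the strategy is the right one; the question is whether your sketch closes the argument, and it does not.

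Two steps carrying the real content are asserted rather than proved. First, the claim that $c$ is $W$-conjugate to a minimal vertex exactly when $\mathcal O_c$ meets $(G/B)^T$ (for $T$ a maximal torus of $K$) is a nontrivial fact from the Richardson--Springer/Springer theory; saying that the $W$-action of Lemma 4.1 ``sends $T$-fixed points to $T$-fixed points via suitable root-subgroup translates'' is not an argument, since that action is defined on orbits, and one must actually show that the orbits meeting the $T$-fixed locus are precisely the $W$-translates of the closed ones. Second, you explicitly defer what you call the heart: matching the $T$-stable curves in $S\cap X_d$ through $x$ with the edges of $\Gamma(c,d)$ at $c$. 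Brion's theorem bounds the number of curves, not the degree; to transfer both the inequality and the necessity of equality you need each edge to account for exactly one curve (not none, not several, and not a curve joining $x$ to another fixed point in the same orbit), and this multiplicity control is precisely what is missing. Moreover, the proposed type-by-type verification against the edge descriptions of Section 3 is the wrong direction --- those combinatorial descriptions are themselves downstream of the same geometry, and a classical-case check would in any event not yield Brion's general statement. Finally, invoking the curve-counting theorem requires verifying that $x$ is attractive in the slice and lies on only finitely many $T$-stable curves; when $T$ is not maximal in $G$ (types $AI$, $AII$) the finiteness at least deserves an argument (in these cases the weight spaces of $T$ on $\mathbb C^n$ are lines, so $C_G(T)$ is a maximal torus of $G$ and $(G/B)^T$ is finite), and attractiveness of the slice at a point of a non-closed orbit is exactly the issue that separates Brion's generalization from Springer's original equal-rank statement.
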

 
\noindent We call this condition {\sl Brion's criterion}.  Brion also defines a notion of {\sl attractive slice} $S$ of a symmetric variety $X$ , showing that a necessary and sufficient condition for rational smoothness of a symmetric variety at a point is the smoothness of an attractive slice to it at the corresponding point, and similarly for smoothness \cite[Prop.\ 2.1]{Br99}.  Also Hultman has sharpened Theorem 4.6 in a couple of special cases.  

\begin{theorem}[{\cite{H12}}]
In type $AII$ (and in two exceptional types) $X_\pi$ is rationally smooth at $\mathcal O_\mu$ if and only if the degree of $\mu$ in $\Gamma(\mu,\pi)$ equals the rank difference $\ell(\pi)-\ell(\mu)$; in general, this degree is at least $\ell(\pi)-\ell(\mu)$.  Also $X_\pi$ is rationally smooth if and only if the Poincar\' e polynomial $P_\pi(t)$ is palindromic.
\end{theorem}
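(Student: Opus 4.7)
The strategy exploits the transitivity of the $W$-action on the Bruhat graph in type $AII$. By Remark 4.2 this action is by $S_{2m}$-conjugation on $I'_{2m}$, and all fixed-point-free involutions of $[1,2m]$ form a single conjugacy class, so every vertex $\mu$ is $W$-conjugate to the unique minimal vertex $w_0$. Consequently Brion's Theorem 4.6 applies at every $\mu \le \pi$, giving both the unrestricted inequality $\deg(\mu, \Gamma(\mu,\pi)) \ge \ell(\pi) - \ell(\mu)$ and the necessity of equality whenever $X_\pi$ is rationally smooth at $\mathcal{O}_\mu$.

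For sufficiency, I would pass to Brion's attractive slice $S$ to $X_\pi$ at a point $p_\mu \in \mathcal{O}_\mu$ via \cite[Prop.\ 2.1]{Br99}. The slice has dimension $\ell(\pi) - \ell(\mu)$, is $T$-stable, and its smoothness at $p_\mu$ is equivalent to rational smoothness of $X_\pi$ at $\mathcal{O}_\mu$. The nonzero $T$-weights on the tangent space $T_{p_\mu} S$ are in bijection with the one-dimensional $T$-orbits through $p_\mu$, and by Theorem 3.7 these in turn correspond to the edges of $\Gamma(\mu,\pi)$ incident to $\mu$, parametrized by transpositions $t$ with $t\mu t \ne \mu$. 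The degree hypothesis then forces $\dim T_{p_\mu} S \le \dim S$, so $S$ is smooth at $p_\mu$.

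For the palindromicity statement, the Betti numbers of $X_\pi$ can be computed from the $K$-orbit stratification via a Bialynicki-Birula-type paving attached to a generic one-parameter subgroup of $T$, yielding $\dim H^{2i}(X_\pi;\mathbb{C}) = \#\{\mu \le \pi : \ell(\mu) = i - d\}$, where $d$ is the dimension of the closed orbit. Hence the topological Poincar\'e polynomial of $X_\pi$ equals $t^{2d} P_\pi(t^2)$, and palindromicity of $P_\pi(t)$ is equivalent to Poincar\'e duality on $H^*(X_\pi;\mathbb{C})$. By McCrory's theorem this is equivalent to rational smoothness of $X_\pi$.

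The central obstacle is the tangent-weight bijection in the sufficiency step: one must verify that every $T$-stable curve through $p_\mu$ in $S$ really corresponds to an edge of $\Gamma(\mu,\pi)$ at $\mu$, with no extra tangent weights contributing to $T_{p_\mu} S$. This check, routine in the Schubert case via Chevalley's formula, requires careful analysis of the combinatorial description in Theorem 3.7 for $K$-orbits, and it is precisely this step that restricts the sharp local equivalence to type $AII$ and the two exceptional types treated in \cite{H12}: in the remaining classical symmetric types there are additional $T$-stable curves through $p_\mu$ contributing weights not accounted for by conjugation edges, so the local degree count need not force smoothness of the slice.
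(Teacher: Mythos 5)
This statement is quoted in the paper from Hultman \cite{H12} without proof, so your proposal has to stand on its own; its easy half does, but both substantive directions rest on steps that fail. The necessity half and the inequality are fine: since fixed-point-free involutions form a single conjugacy class and the $W$-action of Remark 4.2 is conjugation, every vertex of the poset is $W$-conjugate to the unique minimal vertex $w_0$, so Theorem 4.6 applies at every $\mu\le\pi$. But in the sufficiency step you assert that the nonzero $T$-weights of $T_{p_\mu}S$ are in bijection with the $T$-stable curves through $p_\mu$, hence with the edges of $\Gamma(\mu,\pi)$ at $\mu$, so that the degree hypothesis forces $\dim T_{p_\mu}S\le\dim S$. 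No such bound exists: the number of $T$-curves through an attractive fixed point bounds the dimension of the slice from \emph{below} (this is exactly how Brion obtains the necessary condition of Theorem 4.6), but it does not bound the Zariski tangent space from above, and the weight--curve bijection you invoke is available only at points already known to be smooth. If this step were valid, the identical argument would establish the sharp local criterion in every classical type, since nothing in it is specific to $AII$ beyond the conjugacy observation already used for necessity; that contradicts the fact that the equivalence is special to $AII$ and two exceptional pairs (compare type $AI$, where one must examine many vertices conjugate to the bottom one). Your closing diagnosis --- that the other types have extra $T$-curves unaccounted for --- is therefore off the mark: Brion's count and inequality hold in all types, and what fails elsewhere is precisely the implication from the degree count to (rational) smoothness. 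Theorem 3.7 is a purely order-theoretic description of adjacency and cannot supply the missing geometric control; this is where Hultman's actual analysis of intervals in the poset of twisted identities does the work.

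The palindromicity argument contains a second genuine error. The formula $\dim H^{2i}(X_\pi;\mathbb C)=\#\{\mu\le\pi:\ell(\mu)=i-d\}$ presupposes a paving of $X_\pi$ by affine cells indexed by the orbits, but $K$-orbits are not affine cells: the closed orbit is the full flag variety of $Sp(2m,\mathbb C)$, of dimension $d=m^2$, with cohomology in every even degree from $0$ to $2d$, so already for $\pi=w_0$ your formula would force $H^0(X_{w_0})=0$. Hence the topological Poincar\'e polynomial of $X_\pi$ is not $t^{2d}P_\pi(t^2)$, and McCrory's criterion does not convert palindromicity of the combinatorial polynomial $P_\pi(t)$ into Poincar\'e duality for $H^*(X_\pi)$. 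The true relation between $P_\pi(t)$ and rational smoothness is exactly the nontrivial content of Hultman's theorem (just as in the Schubert case it is the content of Carrell--Peterson, where it does rest on the affine Bruhat decomposition), so it cannot be assumed. In short, you have correctly reduced the ``only if'' statements and the inequality to Theorem 4.6, but both ``if'' directions remain unproved.
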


\begin{remark} As in the Schubert case the graph $\Gamma(\mu,\pi)$ need not be regular nor have a palindromic Poincar\' e polynomial for $X_\pi$ to be rationally smooth at $\mathcal O_\mu$.
\end{remark}

\subsection{Type $A$}

We now apply these criteria to prove pattern avoidance criteria for smoothness and rational smoothness, treating first symmetric varieties parametrized by clans.  First we need to extend the notion of pattern avoidance to clans.  We say that the clan $c$ {\bf includes} the clan $d=(d_1,\ldots,d_m)$ if there are indices $1\le i_1<\ldots<i_m\le n$ such that  $(c_{i_1},\ldots,c_{i_m})$ is a clan identified with $(d_1,\ldots,d_m)$; otherwise we say that $c$ {\bf avoids} $d$.  Thus for example the clan $(1+2-12-)$ contains $(1-1-)$ and the equivalent clan $(2-2-)$, but avoids $(1221)$.

\begin{theorem}[{\cite{M09}}]
The variety $X_d$ with clan $d$ in type $AIII$ is smooth if and only if it is rationally smooth, or if and only if $d$ avoids the patterns $(1+-1),\hfil\linebreak (1-+1),(1212),(1+221),(1-221),(122+1),(122-1),(122331)$.  Whenever $X_d$ is rationally smooth it is an iterated fiber bundle with smooth fiber over a partial flag variety; whenever $X_d$ is rationally singular this can be detected by Brion's criterion applied to a suitable closed orbit $\mathcal O_c$ below $\mathcal O_d$.
\end{theorem}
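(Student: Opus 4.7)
The plan is to close the cycle of implications ``$X_d$ rationally smooth $\Rightarrow d$ avoids the eight patterns $\Rightarrow X_d$ is an iterated fiber bundle with smooth fiber over a partial flag variety $\Rightarrow X_d$ smooth $\Rightarrow X_d$ rationally smooth.'' The step from smooth to rationally smooth is standard, and an iterated bundle with smooth fibers over a smooth partial flag variety is itself smooth, so closing the loop simultaneously yields the pattern-avoidance characterization, the equivalence of smoothness and rational smoothness, and the structural description. This removes any need to appeal to a direct analogue of Theorem 4.5 for symmetric varieties.

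For the direction ``rationally smooth $\Rightarrow$ pattern avoidance'' I would argue the contrapositive. The closed orbits in type $AIII$ are exactly the all-sign clans, and they form a single $W$-orbit under the coordinate permutations of Remark 4.3; hence any closed orbit is $W$-conjugate to a minimal vertex and Theorem 4.6 (Brion's criterion) is applicable to every closed orbit below $\mathcal O_d$. For each of the eight bad patterns $p$, I would first take $d=p$, choose the ``natural'' closed orbit $c$ obtained from $p$ by converting each matched numerical pair into a $(+,-)$ or $(-,+)$ sign pair, and then enumerate the interval $[c,d]$ and its induced subgraph using Theorem 3.4 together with Wyser's moves and the extra Bruhat-graph edges described after Example 3.3. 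A direct tally should show in each case that $\deg(c)$ in $\Gamma(c,d)$ strictly exceeds $\ell(d)-\ell(c)$, yielding rational singularity by Brion's criterion. For a general clan $d$ containing $p$ at positions $i_1<\cdots<i_m$, I would lift $c$ to a closed orbit $c_d$ below $\mathcal O_d$ by applying the same number-to-sign conversion at those positions and replacing any other matched numerical pairs of $d$ by arbitrary sign pairs; a localization argument (essentially the attractive-slice comparison of Brion) should transfer the excess degree from $[c,p]$ to $[c_d,d]$.

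For the direction ``pattern avoidance $\Rightarrow$ iterated fiber bundle,'' I would induct on the length $n$ of $d$, with the cases $n\le 3$ checked directly. In the inductive step, assuming $d$ avoids all eight patterns, I would isolate a ``peelable'' outer structure on $d$: an endpoint sign, or an outermost matched numerical pair $c_1=c_n$, or an outer block of matched signs, from which a smooth map $X_d\to X_{d'}$ to the variety of a shorter clan $d'$ can be constructed with fiber a projective space, Grassmannian, or smaller smooth symmetric variety handled by the inductive hypothesis. The key combinatorial claim is that avoidance of the length-four patterns $(1+-1),(1-+1),(1212)$ guarantees the existence of \emph{some} valid peeling, while avoidance of the longer patterns $(1+221),(1-221),(122+1),(122-1),(122331)$ ensures that the resulting $d'$ again avoids all eight patterns, so that the induction continues and the smoothness of the total space propagates upward.

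The main obstacle is this last combinatorial claim. The five longer patterns are calibrated precisely to block the situations where a naive peeling appears possible but would introduce a length-four bad pattern in $d'$, and so the bulk of the work is a careful case analysis on the outer structure of a pattern-avoiding $d$, in particular on what the first and last numerical entries are matched to (each other, an interior sign, or an interior number). Once this structural lemma is in hand, the fiber-bundle construction and smoothness follow straightforwardly by induction. The Brion-criterion calculations on the necessity side are routine once the correct closed orbit has been pinpointed; the only delicate point there is accurate bookkeeping for the extra non-cover edges in the Bruhat graph, which must be included when counting $\deg(c)$ in $\Gamma(c,d)$.
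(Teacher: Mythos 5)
Your overall skeleton (close the cycle smooth $\Rightarrow$ rationally smooth $\Rightarrow$ avoidance $\Rightarrow$ bundle structure $\Rightarrow$ smooth) matches the paper, and on the necessity side you and the paper both apply Brion's criterion at a closed orbit obtained by turning matched numerical pairs into sign pairs. But two steps of your proposal have genuine gaps. First, your passage from the pattern itself to a general clan $d$ containing it is not justified: the paper does \emph{not} verify the degree excess only for $d=p$ and then ``transfer'' it; it chooses the closed clan $c$ inside the full clan $d$ (pattern pairs become $-,+$ in that order, \emph{all} remaining matched pairs become pairs of \emph{opposite} signs) and then counts the degree of $c$ in $\Gamma(c,d)$ directly. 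Your ``attractive-slice/localization'' transfer is not a mechanism the paper provides for comparing graph degrees of $[c,p]$ and $[c_d,d]$; Brion's slice result compares a variety with a slice at a point of that same variety, not $X_p$ in a smaller flag variety with $X_d$ in a larger one, so the claim that excess degree propagates needs its own argument (a direct count, or an induction on adding entries as is sketched in the type $AI$ case). Also, ``replacing any other matched numerical pairs of $d$ by arbitrary sign pairs'' is not allowed: a same-sign replacement changes the signature and violates the inequalities of Theorem 3.4, so $c_d$ would not lie below $\mathcal O_d$; the replacements must be opposite-sign pairs.

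Second, your sufficiency direction is a different route from the paper's and its core is left unproved. The paper does not peel entries inductively: it shows that avoidance forces a global structure on $d$ (matched-pair intervals nested or disjoint, constant signs between pairs, nested chains inside pairs), and from this exhibits an $\iota$-stable parabolic $Q\supseteq B$ such that $\mathcal O_d$ is exactly the preimage $\pi^{-1}(K\cdot\mathfrak q)$ of a closed orbit in $G/Q$; then $X_d$ fibers smoothly over $K\cdot\mathfrak q$ with fiber the flag variety $Q/B$ (citing \cite{T05}), hence is smooth in one step. Your iterated-peeling scheme might be made to work, but you yourself flag the key combinatorial claim (that avoidance guarantees a valid peeling and that the peeled clan $d'$ again avoids the eight patterns) as the main obstacle and do not prove it; moreover the existence of a smooth map $X_d\to X_{d'}$ with projective-space or Grassmannian fibers is itself nontrivial, since the natural projections $G/B\to G/Q$ send $X_d$ to a $K$-orbit closure in a \emph{partial} flag variety, not directly to the variety of a shorter clan. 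As it stands, both directions need the missing arguments supplied before the cycle of implications closes.
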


\begin{proof}
Suppose first that $c$ contains one of the bad patterns.  If this pattern has just two equal numbers, replace them by $-$ and $+$, in that order; if it includes two such pairs, replace the four numbers by $-,+,-,+$, in that order; if it includes three such pairs, replace the numbers by $-,+,-,+,-,+$, in that order.  In all eight cases, continue by replacing every pair of equal numbers in $c$ by a pair of opposite signs.  We obtain a clan $c$ corresponding to a closed orbit $\mathcal O_c$ below $\mathcal O_d$.  One easily checks that the degree of $\mathcal O_c$ in the Bruhat graph $\Gamma(c,d)$ is larger than $\ell(d)$, whence $X_d$ is rationally singular by Brion's criterion.  

Now suppose that $d$ avoids all the bad patterns.  Writing $d=(d_1\ldots d_n)$ we see that the intervals $[s,t]$ of indices $s,t$ with $d_s=d_t\in\mathbb N$ are such that any two of them are either disjoint or one is contained in the other.  All signs lying between pairs of equal numbers in $d$ are the same.  If a sign lies between a pair of equal numbers, then it also lies between every pair of equal numbers enclosed by the first pair.  Finally, if one pair of equal numbers lies inside another, then the pairs of equal numbers enclosed by this one form a single nested chain.  Then there is a suitable $\iota$-stable parabolic subgroup $Q$ of $G$ containing the Borel subgroup $B$ such that the $K$-orbit $K\cdot\mathfrak q$ of $\frak q$, the Lie algebra of $Q$, identifies with a closed orbit in the partial flag variety $G/Q$, whose preimage $\pi^{-1}(K\cdot\mathfrak q)$ under the natural projection $\pi:G/B\rightarrow G/Q$ is $\mathcal O_d$.   Then $X_d$ fibers smoothly via $\pi$ over $K\cdot\mathfrak q$ with fiber the flag variety $Q/B$ of $Q$, which may be identified with the flag variety of any Levi factor of $Q$ \cite{T05}.  Hence this closure is smooth, as desired.
\end{proof}

\noindent This result was stated incorrectly in \cite{M09}, with some bad patterns missing; the version of that paper on the arXiv is correct.

\begin{example}
The variety corresponding to the clan $d=(1++-2-21)$ in type $AIII$ with $p=q=4$ is rationally singular (not rationally smooth) at points of the orbit corresponding to the clan $c=(+++---+-)$, as one sees by computing the degree of $c$ in the induced Bruhat graph $\Gamma(c,d)$.  In this graph there is an edge from $c$ to any clan $c'$ obtained from $c$ by changing a pair of opposite signs to two equal numbers.  On the other hand, the vertex corresponding to the clan $(12++21)$ in the Bruhat graph of type $AIII$ with $p=4,q=2$ satisfies the hypothesis of the second part of the proof.  The corresponding variety is the full flag variety, so is smooth.
\end{example}

Finally we treat the two cases where $K$-orbits are parametrized by involutions.  Here for typographical convenience we omit the brackets around the one-line notations.  We need to modify the classical definition of pattern inclusion for Schubert varieties.  We decree that an involution $\pi=[\pi_1\ldots\pi_m]$ (still in one-line notation) includes another one $[\mu_1\ldots\mu_r]$ if and only if there are indices $1\le k_1<\ldots<k_r\le m$ {\sl permuted by $\pi$} such that $\pi_{k_a}<\pi_{k_b}$ if and only if $\mu_a<\mu_b$; thus we are interested only in involutions, not arbitrary permutations, lying inside larger involutions.  Thus, for example, the involution $[65872143]$ fails to contain the pattern $[2143]$, since although the indices $2,1,4,3$ occur in that order in the involution they are not permuted by it. We will say more about the distinction between pattern avoidance in the Schubert and symmetric settings below.

\medskip
In type $AII$ we have

\begin{theorem}[{\cite{M11}}]
The variety $X_\pi$ corresponding to the involution $\pi$ is rationally smooth if and only if $\pi$ avoids the bad patterns $351624,64827153,57681324,53281764,\hfil\linebreak43218765,65872143,21654387,21563487,34127856,36154287,21754836,63287154,\hfil\linebreak54821763,46513287,21768435$.  This condition holds if and only if the unique bottom vertex $w_0$ of the Bruhat graph $\Gamma(w_0,\pi)$ has degree $\ell(\pi)$, or if and only if $X_\pi$ is smooth.  More generally, $X_\pi$ is rationally smooth along $\mathcal O_\mu$ if and only if it is smooth along this orbit, or if and only if the bottom vertex of the Bruhat graph $\Gamma(\mu,\pi)$ has degree $\ell(\pi)-\ell(\mu)$.
\end{theorem}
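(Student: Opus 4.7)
The plan is to handle rational smoothness and smoothness together, exploiting the fact that type $AII$ sits inside the simply-laced type $A$: Theorem 4.5 makes rational smoothness of $X_\pi$ along any $\mathcal{O}_\mu$ equivalent to smoothness along $\mathcal{O}_\mu$, so it suffices to prove the rational smoothness statement. Hultman's Theorem 4.7 further identifies local rational smoothness along $\mathcal{O}_\mu$ with the degree of $\mu$ in $\Gamma(\mu,\pi)$ being $\ell(\pi)-\ell(\mu)$, which immediately yields the local part of the statement. For the global part, the additional task is to show that it suffices to test the degree condition at the unique closed orbit $\mu=w_0$; this will follow once the pattern-avoidance criterion is established, since avoidance (a condition on $\pi$ alone) will turn out to imply smoothness of $X_\pi$ everywhere.

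For the containment direction, suppose $\pi$ contains one of the listed bad patterns $p$ at positions $k_1<\cdots<k_r$. I would show directly that $w_0$ has degree strictly greater than $\ell(\pi)$ in $\Gamma(w_0,\pi)$, forcing $X_\pi$ to be rationally singular along $\mathcal{O}_{w_0}$ by Hultman's criterion. Using the type $AII$ adjacency rule from Section 3 ($\nu$ is adjacent to $w_0$ iff $\nu=tw_0t$ for some transposition $t$ not commuting with $w_0$), the neighbors of $w_0$ in $\Gamma(w_0,\pi)$ biject with such $t$ for which $tw_0t\le \pi$ in the reverse Bruhat order on $I'_{2m}$. The 15 patterns are calibrated so that the embedded $p$ supplies enough noncommuting transpositions to overshoot $\ell(\pi)$ by at least one: the verification proceeds pattern by pattern, exhibiting the requisite $t$'s from the pattern coordinates and tracking the target rank via the formula $\ell(\pi)=n^2-n-(\mathrm{inv}(\pi)-n)/2$ with $n=2m$.

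For the avoidance direction, mirroring the strategy of Theorem 4.1 in type $AIII$, the plan is to produce an $\iota$-stable parabolic $Q\supseteq B$ of $GL(2m,\mathbb{C})$ such that $X_\pi$ is the preimage under $G/B\to G/Q$ of a smooth $K$-orbit closure in $G/Q$, with fiber the flag variety of a Levi factor $L$ of $Q$. The avoidance hypotheses constrain the $m$ two-cycles of $\pi$ to nest and cross in a highly restricted way---compatibly with the symplectic pairing defining $K$---and these combinatorial constraints let one read the Levi $L$ directly from the cycle structure of $\pi$. Iterating exhibits $X_\pi$ as an iterated smooth fiber bundle over a partial flag variety, hence smooth; smoothness in particular implies rational smoothness everywhere, so the bottom-vertex degree condition at $w_0$ is automatic by Hultman.

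The main obstacle is the pattern-by-pattern bookkeeping in the containment direction: for each of the 15 patterns one must verify that the resulting count of admissible transpositions $t$ (those with $tw_0t\le\pi$ and not commuting with $w_0$) strictly exceeds $\ell(\pi)$. Establishing minimality of the list---that no proper sublist forces rational singularity on every $\pi$ containing some member---adds a further layer of verification, and should drop out of checking that each listed pattern is realized by a minimal involution whose only source of singularity is the embedded pattern itself.
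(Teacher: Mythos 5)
Your opening reduction is not valid: Theorem 4.5 (Carrell--Kuttler) equates smoothness and rational smoothness for \emph{Schubert} varieties $X_w$ in simply-laced type, not for $K$-orbit closures, and its analogue for symmetric varieties is false in general even for simply-laced $G$ --- type $AI$ in this same paper already gives rationally smooth but singular $O(n)$-orbit closures (e.g.\ involutions containing $2143$ with an odd number of fixed points in between), and type $CI$ does too. So the equivalence ``smooth $\Leftrightarrow$ rationally smooth'' in type $AII$ is part of what has to be proved; the paper does not get it for free from simply-lacedness but refers to \cite{M11} for the second and third assertions.

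The more serious gap is in your avoidance direction. The structural claim you want --- that every pattern-avoiding $X_\pi$ is the preimage under $G/B\to G/Q$ of a closed $K$-orbit in a partial flag variety, as in the type $AIII$ proof of Theorem 4.8 --- is false in type $AII$. Take $\pi=432165$ in $S_6$: it avoids all fifteen patterns (the only one of length $6$ is $351624$), and its closure is exactly the set of flags with $V_2$ isotropic and $V_1\perp V_3$ while $V_3$ is \emph{not} Lagrangian on the dense orbit. Any pullback of the closed orbit from some $G/Q$ that constrains the pair $(V_1,V_3)$ necessarily forces $V_3$ to be Lagrangian, so $X_{432165}$ is not such a pullback (a dimension count gives the same conclusion: it has dimension $13$, which no pullback realizes with these rank conditions). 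Allowing the base to be a general ``smooth $K$-orbit closure in $G/Q$'' makes the argument circular, since smoothness of the base is the same unproved statement and you set up no induction for it. The paper's actual route is combinatorial: assuming avoidance, it proves that $P_\pi(t)$ factors as a product of polynomials $1+t+\cdots+t^e$ (hence is palindromic) by an induction that conjugates $\pi$ so as to move the value $1$ one step to the right (or $2n$ to the left), using the Bruhat-order criterion to compare $\mu\le\pi$ with $\mu\le\pi^{(1)}$ and ruling out a short list of ``bad seeds'' to guarantee that the intermediate involutions still avoid the patterns; rational smoothness then follows from Hultman's Theorem 4.7, which in type $AII$ is an if-and-only-if criterion. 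Finally, in your containment direction, the paper constructs a witness orbit $\mathcal O_\mu$ (via \cite[Lemma 1]{M11}) whose degree in $\Gamma(\mu,\pi)$ is too large, rather than verifying the overshoot at $w_0$ itself; your plan to count neighbors of $w_0$ ``from the pattern coordinates'' ignores that the degree of $w_0$ in $\Gamma(w_0,\pi)$ and the rank $\ell(\pi)$ both receive contributions from the whole of $\pi$, not just the embedded pattern, so that comparison needs a genuine argument (and the ``minimality of the list'' you propose to check is not needed for the theorem at all).
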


\begin{proof}
We sketch the proof of the first assertion given in \cite{M11}.  If $\pi$ contains one of the bad patterns then one constructs an involution $\mu$ such that the degree of $\mu$ in the Bruhat graph $\Gamma(\mu,\pi)$ is too large, using \cite[Lemma 1]{M11}.  If $\pi$ avoids all bad patterns, then by Theorem 4.7 it is enough to show that the Poincar\' e polynomial $P_\pi(t)$ is palindromic.  In fact one proves something stronger, realizing $P_\pi(t)$ as a product of sums of the form $1+t+\ldots+t^e$ for various exponents $e$.  Let $\pi=\pi_1\ldots\pi_{2n}$ and assume first that $2n-\pi_1\le\pi_{2n-1}$ (i.e., that 1 is closer to the end of $\pi$ than $2n$ is to its beginning).  Set $\pi^{(1)} = t\pi t$, where $t$ is the transposition interchanging $\pi_1$ and $\pi_1+1$, so that $1$ appears one place further to the right in $\pi^{(1)}$ than in $\pi$.  Define $\pi^{(2)},\ldots,\pi^{(2n-\pi_1)}$ similarly, so that 1 appears at the end of $\pi^{(2n-\pi_1)}$.  If $\mu=\mu_1\ldots\mu_{2n}$ then Theorem 3.1 shows that $\mu_1\ge\pi_1$.  If $\mu_1=\pi_1$ then one checks that $\mu'<\pi'$, where $\mu',\pi'$ are obtained from $\mu,\pi$ by omitting the indices $1$ and $\mu_1$, replacing all indices $i$ between 1 and $\mu_1$ by $i-1$, and replacing all indices $j>\mu_1$ by $j-2$; moreover, $\pi'$ continues to avoid all bad patterns.  If instead $\mu_1>\pi_1$, then we claim that $\mu\le\pi^{(1)}$ and that $\pi^{(1)}$ continues to avoid all bad patterns.  If this holds, then induction shows that $\mu\le\pi^{(\mu_1-\pi_1)}$ whence we may as above eliminate the indices $1$ and $\mu_1$ from $\mu$ and $\pi^{(\mu_1-\pi_1)}$ and repeat the above procedure.  Using the first formula for the rank function in $I_{2m}$ given after the statement of Theorem 3.6, we deduce that $P_\pi(t)$ factors in the way claimed above, where the first factor is $1+t+\ldots+t^{2m-\pi_1}$.

To prove the claim that $u\le\pi^{(1)}$ and that $\pi^{(1)}$ avoids the bad patterns, set $\pi_1=k,\pi_{k+1} = i$ and suppose that there is $\mu$ with $\mu<\pi,\mu\not\le\pi^{(1)}$, and $\mu_1>\pi_1$.  There are two cases, depending on whether $i<k$ or $i>k+1$.  If $i<k$ then we look at the indices greater than $k$ among $\pi_1\ldots\pi_k$.  If these do not occur in increasing order, then the pattern $p:=465132$ is included in $\pi$, in such a way that the 4 corresponds to $\pi_1=k$.  The assumption $2n-\pi_1\le\pi_{2n}-1$ implies that $\pi\ne p$, so that $\pi$ is the product of three disjoint transpositions forming the pattern $p$ and at least one more transposition.  Now one checks that no matter how one chooses this transposition to guarantee that $2n-\pi_1\le\pi_{2n}-1$ we get a bad pattern in $\pi$, a contradiction; more precisely, one of the five patterns $46513287,63287154, 65872143, 64827153$, or $57681324$, must occur in $\pi$.  If the indices greater than $k$ do occur in increasing order, then (since $\pi$ is an involution) the indices less than $k$ {\sl not} occurring among $\pi_1\ldots\pi_k$ are all larger than $i$, whence the indices $2,\ldots, i-1$ occur among $\pi_1\ldots \pi_{k-1}$ (and $\pi_k=1$).  These conditions are incompatible with $\mu<\pi$ and $\mu\not\le\pi^{(1)}$, so this case leads to a contradiction.  So we must have $i>k+1$.  Now if $\pi_j>k$ for any $j<i$ then one of the patterns $351624$ of $365412$ must occur in $\pi$; the former is ruled out since it is a bad pattern and the latter, combined with the condition that $2n-\pi_1\le\pi_{2n}-1$, would force one of the bad patterns $36154287$ or $53281764$ to occur in $\pi$ (arguing as in the case above where $\pi$ includes the pattern $p$).  So $\pi_1\ldots\pi_k$ must be a permutation of $1\ldots k$ and $k$ is even.  Now the absence of the patterns $43218765$ and $43217856$ in $\pi$ implies that $i=k+2$.  In this case the only way that we can have $\mu<\pi,\mu\not\le\pi^{(1)}$ is if the $k-1$ indices between 2 and $k$ appear among $\mu_2\ldots\mu_k$, which is a contradiction since $\mu$ has no fixed points.

If instead $\pi_{2n}-1<2n-\pi_1$, then one repeats the above argument, replacing $1$ by $2n$ and moving $2n$ to the left instead of 1 to the right.  Thus we define $\pi^{(1)},\pi^{(2)}$, and so on, so that $2n$ appears one place to the left in $\pi^{(1)}$ than it does in $\pi$; if $\mu\le\pi$ then we must have $\mu_{2n}\le\pi_{2n}$, and if $\mu_{2n}<\pi_{2n}$, then we must have $\mu\le\pi^{(1)}$, lest $\pi$ contain a bad pattern.  Here the two \lq\lq bad seeds'' that must be ruled out are $546213$ and $532614$; these give rise to the bad patterns $21768435, 54827163,\linebreak65872143,64827153,57681324, 21754386$, and $53281764$.

Finally, we must ensure in both cases that $\pi^{(1)}$ avoids all bad patterns whenever $\pi$ does.  This requires that we rule out four more \lq\lq bad seeds", namely $216543,432165,\linebreak215634$, and $341265$; we achieve this by ruling out the bad patterns $21654387,\linebreak43218765,34127856,43217856,21563487$, and $34128765$.  Excluding also the bad pattern $351624$ of length 6, we see that if $\pi$ avoids all bad patterns then $P_\pi(t)$ factors in the desired way and $X_\pi$ is rationally smooth, as required.
\end{proof}
We refer to \cite{M11} for the proof of the second and third assertions. 
 \medskip
In type $AI$ we have

\begin{theorem}
The variety $X_\pi$ corresponding to $\pi$ is rationally smooth if and only if $\pi$ avoids the bad patterns $14325,21543,32154,154326,124356,351624,132546,\hfil\linebreak426153,153624,351426,1243576,2135467,2137654,4321576,5276143,5472163,\hfil\linebreak1657324,4651327,57681324,65872143,13247856,34125768,34127856,64827153$ and $2143$, except that the pattern $2143$ is allowed to occur if there are an odd number of fixed indices between $21$ and $43$ (thus the involution $21354$ corresponds to a rationally smooth variety while $213465$ does not).  The variety $X_\pi$ is smooth if and only if it is rationally smooth and in addition avoids the patterns $1324,2143$ (regardless of the number of fixed points between $21$ and $43$ for the latter pattern).  Rational singularity is always detected by the degree of some vertex conjugate to the bottom vertex in the Bruhat graph.
\end{theorem}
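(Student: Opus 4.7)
The plan is to mirror the strategy used for type $AII$ in Theorem 4.9: Brion's criterion (Theorem 4.6) handles the necessity direction, while a palindromic factorization of the Poincar\'e polynomial handles sufficiency. The new feature in type $AI$ is that involutions may carry fixed points, and this is ultimately what forces the parity exception for the pattern $2143$.

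For necessity I would argue pattern by pattern: if $\pi$ contains one of the listed bad patterns, I construct an explicit involution $\mu$ with $\mu\le\pi$ in the reverse Bruhat order such that $\mu$ is a $W$-conjugate (under the action described in Remark 4.2) of the minimal vertex of the interval $[\mu,\pi]$, and such that the degree of $\mu$ in $\Gamma(\mu,\pi)$ strictly exceeds $\ell(\pi)-\ell(\mu)$. Adjacencies at $\mu$ are enumerated using Proposition 3.1: each transposition $t$ not commuting with $\mu$ contributes the neighbor $t\mu t$, and each transposition $t$ commuting with $\mu$ with $t\mu\ne\mu$ contributes the neighbor $t\mu=\mu t$. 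The parity exception for $2143$ arises because a fixed point of $\pi$ lying strictly between the $21$ and the $43$ contributes one extra commuting transposition that increases the degree; the parity of the count of such fixed points is what determines whether the resulting total degree agrees with or exceeds the rank difference. This case analysis simultaneously establishes the third claim of the theorem, that rational singularity in type $AI$ is always detected by a $W$-conjugate of a minimal vertex.

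For sufficiency I would first establish a type $AI$ analogue of Theorem 4.7, namely that palindromicity of $P_\pi(t)$ implies rational smoothness of $X_\pi$, either by adapting Hultman's argument or by combining Brion's criterion with the degree counting above. I would then show by induction on $n$ that if $\pi$ avoids all bad patterns then $P_\pi(t)=\prod_i(1+t+\cdots+t^{e_i})$. Imitating the $AII$ proof, set $\pi^{(j)}=s_j\pi^{(j-1)}s_j$ where $s_j$ swaps the current position of $1$ with its right neighbor, iterate until $1$ reaches the end, and then peel off the pair $(1,\mu_1)$ to reduce to a smaller involution avoiding the same patterns. The inductive step requires that $\pi^{(j)}$ continues to avoid all bad patterns and that $\mu\le\pi$ with $\mu_1>\pi_1$ forces $\mu\le\pi^{(1)}$; the length-six and length-seven patterns ($154326$, $351624$, $132546$, $1243576$, $2135467$, and so on) are chosen to rule out the immediate obstructions to this implication, while the longer patterns handle obstructions appearing after several iterations. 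The $2143$ exception is required exactly here: an even-gap occurrence of $2143$ genuinely obstructs the factorization, but an odd-gap occurrence does not.

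The smoothness portion follows by combining rational smoothness with a fiber bundle construction in the spirit of Theorem 4.8: once $\pi$ additionally avoids $1324$ and $2143$, one produces an $\iota$-stable parabolic $Q\supseteq B$ such that $X_\pi$ fibers smoothly over a closed $K$-orbit in $G/Q$ with fiber a smooth Schubert variety in $Q/B$ (by the classical Lakshmibai-Sandhya criterion), forcing smoothness of the total space. The main obstacle will be the necessity direction: the case analysis over more than twenty bad patterns is lengthy, and the $2143$ parity bookkeeping (ruling it out for even gap, allowing it for odd gap) demands a careful choice of witness $\mu$ so that the commuting-transposition contributions to its degree behave as predicted.
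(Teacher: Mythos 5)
Your necessity direction is broadly in the spirit of the actual argument (exhibit a vertex conjugate to the bottom one whose degree in the Bruhat graph exceeds the rank difference, and propagate this as fixed points and flipped pairs are added to a bad pattern), but your sufficiency direction rests on a step that is not available. You propose to ``establish a type $AI$ analogue of Theorem 4.7,'' i.e.\ that palindromicity of $P_\pi(t)$ (or correct vertex degrees) implies rational smoothness, ``either by adapting Hultman's argument or by combining Brion's criterion with the degree counting.'' Hultman's theorem is specific to type $AII$ and two exceptional cases, and Brion's criterion is only a necessary condition; neither gives sufficiency in type $AI$, and no such palindromicity criterion is proved or invoked here. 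The paper's proof handles sufficiency geometrically: one uses Brion's notion of an attractive slice to build a variety whose rational smoothness or singularity along the unique minimal closed suborbit matches that of $X_\pi$, and then checks directly that this slice is rationally smooth along that suborbit. Without that (or some substitute), your plan to imitate the $AII$ Poincar\'e-polynomial factorization proves at best that $P_\pi(t)$ is palindromic, which in this setting does not yet yield rational smoothness.

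There is a second divergence in the combinatorial step. The paper does not run an $AII$-style induction moving $1$ to the right; instead it proves that if some vertex conjugate to the bottom one has too large a degree, this already happens for a subinvolution corresponding to a $K$-orbit in the flag variety of $GL(8,\mathbb C)$, and then verifies by computer that in $GL(8)$ rational singularity is exactly captured by the stated pattern list (with the $2143$ parity exception). Your factorization scheme would have to reprove this equivalence by hand, including the delicate bookkeeping with fixed points that produces the odd/even-gap distinction for $2143$; you assert the mechanism (extra commuting transpositions raising the degree) but give no argument that the induction closes, and the presence of fixed points is precisely what breaks the clean $AII$ recursion. Finally, for smoothness the paper again argues via slices rather than the fiber-bundle-over-a-closed-orbit construction of the $AIII$ case; your proposed $\iota$-stable parabolic with ``fiber a smooth Schubert variety'' is not how that construction works (its fiber is a full flag variety $Q/B$), and it is not established that every smooth $X_\pi$ in type $AI$ arises as such a preimage, so this part also needs the slice analysis.
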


The idea of the proof is as follows.  First, if a bad pattern is included, then one shows directly that some vertex in the Bruhat graph has too large a degree; this is done inductively, starting with the graph attached to the bad pattern itself and then showing that such a vertex continues to exist in the graph as fixed points and flipped pairs of indices are added to this pattern.  Next one shows that if one just assumes that all vertices conjugate to the bottom one in the Bruhat graph have the right degree, then $X_\pi$ is rationally smooth; to do this one uses the notion of a slice from \cite[2.1]{Br99} to construct a variety whose rational smoothness or singularity along the unique minimal closed suborbit matches that of $X(\pi)$ and then checks directly that the slice is indeed rationally smooth along this suborbit.  Finally one shows that avoidance of the bad patterns is equivalent to all vertices conjugate to the bottom vertex having the right degree.  One does this by showing that if the degree of some vertex conjugate to the bottom one is too large, then this must already be the case for the graph attached to some subinvolution contained in $\pi$ corresponding to a $K$-orbit in the flag variety of $GL(8,\mathbb C)$.  One then appeals to a computer calculation to show that rational singularity of $K$-orbits in this flag variety is indeed captured by the above list of patterns.  The smoothness criterion also follows by looking at slices.  For the details see \cite{M19,M20}.  If $n$ is even, then it suffices to look at the degree of the bottom vertex $w_0$ alone to detect rational singularity.  If $n$ is odd, it suffices to look at the degrees of just $n$ vertices conjugate to the bottom vertex (including the vertex itself) to detect rational singularity.
  
\begin{conjecture}
In types $AI$ and $AII$, an involution $\pi$ avoids all bad patterns in the sense of this section if and only if it avoids all bad patterns in the sense used for Schubert varieties (where it is not required that the indices in the pattern be permuted by the involution).
\end{conjecture}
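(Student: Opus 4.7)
The easy direction is immediate from the definitions: a symmetric-inclusion of $\mu$ in $\pi$ at positions $k_1<\cdots<k_r$ permuted by $\pi$ is in particular a Schubert-inclusion of $\mu$ at those same positions, so Schubert-avoidance implies symmetric-avoidance. The nontrivial content of the conjecture is the converse: if $\pi$ Schubert-includes some bad pattern, then $\pi$ symmetric-includes some (possibly different) bad pattern on the list.

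My plan for the converse is to pass to the $\pi$-closure of a witnessing position set. Given a Schubert-inclusion of a bad pattern $P$ at positions $i_1<\cdots<i_r$, I would set $S:=\{i_1,\ldots,i_r\}\cup\{\pi(i_1),\ldots,\pi(i_r)\}$. Because $\pi^{2}=1$, the set $S$ is $\pi$-invariant and satisfies $r\le|S|\le 2r$. Writing $S$ in increasing order and standardizing, the restriction $\pi|_S$ yields an involution $\widetilde{\pi}\in I_{|S|}$, and by construction $\pi$ symmetric-includes $\widetilde{\pi}$ at the positions of $S$. It therefore suffices to show that $\widetilde{\pi}$ symmetric-includes some bad pattern (possibly itself). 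Since the lists in Theorems 4.8 and 4.9 have length at most $8$, we have $|S|\le 16$, so only a finite bounded family of super-involutions $\widetilde{\pi}$ needs to be considered: one for each bad $P$ and each $\pi$-orbit partition of $\{i_1,\ldots,i_r\}$ (fixed points, internal pairs, singletons whose partners lie outside the set) together with the finitely many ways of interleaving the external partners with $i_1<\cdots<i_r$.

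The main obstacle is the case-by-case verification that every such $\widetilde{\pi}$ symmetric-includes a bad pattern. Two features complicate it. First, the parity exemption for $2143$ in type $AI$ means one must track how many fixed points land between the $21$ block and the $43$ block of the witness, since such points may enter as external partners and tip whether the enlarged witness is genuinely obstructive; I expect the short length-$6$ bad patterns such as $213465$, $124356$, and $132546$ to play a decisive role here. Second, an external partner $\pi(i_j)$ can land between other $i_k$, so the standardization shuffles values nontrivially. My strategy is to stratify by the defect $|S|-r$: when this difference is small, the analysis reduces to checking a handful of enlargements of each bad pattern by hand; when it is larger, the resulting $\widetilde{\pi}$ is long enough that one of the length-$8$ bad patterns such as $65872143$ or $57681324$ is almost automatically forced inside. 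The lists of Theorems 4.8 and 4.9 appear, at least implicitly, to have been chosen to be closed under precisely this enlargement operation, and I expect a finite computer check over involutions of length at most $16$ to close the verification in both types.
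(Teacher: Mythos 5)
The statement you are addressing is not proved in the paper at all: it is stated as an open conjecture, with only a pointer to partial progress by Fang, Hamaker, and Troyka \cite{FHT20}. Your proposal does not close it either. The trivial direction you dispose of correctly (symmetric inclusion is a restriction of Schubert inclusion, so Schubert avoidance implies symmetric avoidance), and your reduction idea --- close a Schubert witness under $\pi$, standardize to an involution $\widetilde\pi$ of size at most $16$, and use transitivity of symmetric inclusion --- is a legitimate way to reduce the converse to a bounded-size verification in type $AII$. But the entire mathematical content of the conjecture is exactly the claim you defer: that every such $\widetilde\pi$ (equivalently, every involution of bounded size that Schubert-includes a bad pattern) must symmetric-include some bad pattern. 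You offer no argument for this beyond the hope that the lists of Theorems 4.8 and 4.9 are ``closed under precisely this enlargement operation'' and an unperformed computer check; the paper's own example $\pi=[65872143]$, which Schubert-includes $2143$ but symmetric-avoids it and is saved only because it happens to be on the type $AII$ list itself, shows that this closure property is a nontrivial global feature of the lists, not something that follows from the construction.

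There is also a concrete defect in the reduction in type $AI$. The ``bad pattern'' $2143$ there is not a pattern in the usual sense: an occurrence is bad or not according to the parity of the number of $\pi$-fixed indices lying between the $21$ block and the $43$ block, which is global data about $\pi$ not determined by the restriction of $\pi$ to your set $S$. Passing to the $\pi$-closure of a witness and standardizing discards the fixed points between the blocks and can flip this parity, so ``$\widetilde\pi$ symmetric-includes a bad pattern'' and ``$\pi$ symmetric-includes a bad pattern'' are no longer linked by transitivity for this exempted pattern. You would need to enlarge $S$ in a controlled way (e.g.\ retaining one or two intermediate fixed points to record parity) and reprove the transfer statement, none of which is done. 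As it stands the proposal is a plausible reduction strategy for type $AII$ plus an unverified finite check, and an incomplete strategy for type $AI$; it should be presented as a program, not a proof, of a conjecture the paper itself leaves open.
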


\begin{example}
As noted above, the involution $\pi=[65872143]$ fails to contains the pattern $[2143]$ in the Schubert variety sense, but it is one of the bad patterns of Theorem 4.13, so it corresponds to a rationally singular symmetric variety in type $AII$.  Substantial progress on this conjecture has recently been made by Fang, Hamaker, and Troyka \cite{FHT20}.
\end{example}

\subsection{Type $C$}

In type $CII$ the pattern avoidance condition is imposed on just part of the clan.  We have

\begin{theorem}[{\cite{MT09}}]
The variety $X_d$ is smooth if and only if it is rationally smooth.  This occurs if and only if the clan $d$ takes the form $(\gamma_1,d_{p',q'},\gamma_1')$, the concatenation of the clans $\gamma_1,d_{p',q'}$, and $\gamma_1'$, where these clans are specified as follows.  First, we have $0\le p'\le p,0\le q'\le q$ and $d_{p',q'}$ is the clan of the open orbit for the case $K=Sp(2p',\mathbb C)\times Sp(2q',\mathbb C)$ (taken to be the empty clan if $p'=q'=0$).   Next, $\gamma_1$ is a clan for type $AIII$ of signature $(p-p',q-q')$ avoiding the bad patterns of Theorem 4.8.  Finally, $\gamma_1'$ is the unique clan making $(\gamma_1,d_{p',q'},\gamma_1')$ symmetric.  Whenever $X_d$ is rationally smooth it is an iterated fiber bundle with smooth fiber over a partial flag variety, so that $X_d$ is smooth; whenever $X_d$ is rationally singular this can be detected by Brion's criterion applied to a suitable closed orbit below $d$.
\end{theorem}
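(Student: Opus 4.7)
The plan is to mirror the two-part strategy used for Theorem 4.8, adapting it to respect the symplectic symmetry of $CII$ clans. One direction will construct an iterated fiber bundle exhibiting smoothness when $d$ has the prescribed form $(\gamma_1, d_{p',q'}, \gamma_1')$; the other will apply Brion's criterion (Theorem 4.6) to detect rational singularity by exhibiting a closed orbit $\mathcal O_c \le \mathcal O_d$ at which the degree in $\Gamma(c,d)$ is strictly larger than $\ell(d) - \ell(c)$.  Since the first direction yields honest smoothness of a fiber bundle over a smooth base, rational smoothness and smoothness will coincide along the dividing line cut out by this decomposition.

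For the forward direction, assume $d = (\gamma_1, d_{p',q'}, \gamma_1')$.  I would build a $\iota$-stable parabolic $Q \supseteq B$ of $G = Sp(2p+2q,\mathbb C)$ corresponding to forgetting the isotropic subspaces whose indices fall inside the middle block.  Because $d_{p',q'}$ is the open-orbit clan for $Sp(2p',\mathbb C) \times Sp(2q',\mathbb C)$, its orbit closure fills the associated smaller symplectic flag variety, so $\overline{\mathcal O}_d$ should identify with the preimage under $\pi\colon G/B \to G/Q$ of a closed $K$-orbit closure on $G/Q$ controlled by $\gamma_1$.  The restriction of $\pi$ to $X_d$ is then a smooth fiber bundle whose fiber is the flag variety of a Levi factor of $Q$, and the base is itself an iterated smooth fiber bundle by Theorem 4.8 applied to $\gamma_1$ (which avoids all bad $AIII$ patterns by hypothesis).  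Composing the two fibrations yields the asserted iterated fiber bundle structure, hence smoothness.

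For the converse, suppose $d$ does not admit the stated decomposition.  Either (i) the outer clan $\gamma_1$ contains one of the bad patterns of Theorem 4.8, or (ii) the central block fails to take the form $d_{p',q'}$ for any admissible $(p',q')$.  In case (i), apply the sign-replacement recipe from the proof of Theorem 4.8 to $\gamma_1$, performing each substitution simultaneously with its mirror image in $\gamma_1'$ so that the symmetric clan structure is preserved; this produces a sign-only (closed-orbit) clan $c \le d$.  An edge count in $\Gamma(c,d)$, using the fact that $CII$-moves outside the middle come in mirror-paired form, should strictly exceed $\ell(d) - \ell(c)$, invoking Brion's criterion.  In case (ii), the construction of $c$ is analogous, but the obstruction now comes from the central block's deviation from open-orbit shape, for example misplaced signs or number pairs that straddle the midpoint incompatibly with $d_{p',q'}$.

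The main obstacle is the combinatorial completeness of case (ii).  The symplectic symmetry both collapses certain type $AIII$ bad patterns into equivalent symmetric configurations and introduces genuinely new symmetric configurations with no direct $AIII$ analogue, notably equal-number pairs that straddle the midpoint.  Verifying that every such non-decomposable $d$ admits a Brion obstruction will require a case analysis on how equal-number pairs and signs can interact across the midpoint, and I would proceed by inductively peeling off the outermost matched pair of $d$ together with its mirror image, reducing to smaller symmetric instances and to the already-proved Theorem 4.8 for the outer $AIII$ behavior.  Finally, the equivalence of smoothness and rational smoothness follows in the positive case from the genuine smoothness of the constructed fiber bundle, and in the negative case from the slice construction of \cite[Prop.\ 2.1]{Br99}, which converts the Brion obstruction to rational smoothness into an obstruction to smoothness.
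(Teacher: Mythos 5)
Your overall strategy is the one the paper has in mind: it proves this theorem only by reference ("similar to the proof of Theorem 4.8," deferring to \cite{MT09}), i.e.\ an $\iota$-stable parabolic fibration for the smooth direction and Brion's criterion at a closed orbit, obtained by converting number pairs to sign pairs, for the singular direction. But as written your positive direction has a real flaw. Taking $Q$ to be the parabolic that forgets only the subspaces indexed by the middle block cannot be right: when $p'=q'=0$ the middle block is empty and nothing is forgotten, yet the theorem still asserts smoothness for clans such as $(1+1\,2+2)$. The parabolic that works must also forget the subspaces at indices interior to the pair structure of $\gamma_1$, and the whole point of the hypothesis that $\gamma_1$ avoids the Theorem 4.8 patterns (nested or disjoint intervals, constant signs inside pairs, etc.), combined with the middle block being the open clan, is to guarantee that the image of $\mathcal O_d$ in $G/Q$ is a \emph{closed} $K$-orbit and that $X_d$ is its entire preimage, with fiber $Q/B$. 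Your version instead takes a non-closed image and declares its closure smooth "by Theorem 4.8 applied to $\gamma_1$"; that theorem concerns $GL(p)\times GL(q)$-orbit closures in the type-$A$ flag variety and does not apply to $Sp(2p,\mathbb C)\times Sp(2q,\mathbb C)$-orbit closures in an isotropic partial flag variety (and if the base really were a closed orbit it would be smooth for free, so the two halves of your sentence are in tension). Without either proving the image is a closed orbit whose full preimage is $X_d$, or supplying an honest identification of the base with a smooth variety, the smooth direction is not established.

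The converse also has an unexecuted core, which you candidly flag but which is precisely the substance of the theorem. Your dichotomy (i)/(ii) presupposes that an arbitrary symmetric clan comes with a distinguished decomposition $(\gamma_1,\text{middle},\gamma_1')$, which must first be defined when $d$ is \emph{not} of the prescribed form; one must then show that every such $d$ contains one of finitely many bad symmetric configurations (pairs straddling the midpoint incompatibly with $d_{p',q'}$, signs trapped in the middle region, AIII patterns in the outer part), and for each configuration verify both that the proposed sign-only clan $c$ satisfies $\mathcal O_c\le\mathcal O_d$ in the \emph{CII} order of Theorem 3.9 (not merely the AIII order) and that the edge count in the CII Bruhat graph $\Gamma(c,d)$ (mirror-paired moves together with the self-symmetric ones) strictly exceeds $\ell(d)-\ell(c)$. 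None of this is carried out --- it is exactly the delicate bookkeeping where the published statement of \cite{MT09} originally omitted patterns --- so the proposal should be regarded as a correct outline of the paper's intended argument with its two hardest steps missing, rather than a proof.
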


\noindent The proof is similar to that of the previous result (see also \cite{M20'}).  Again, the paper \cite{MT09} states the theorem incorrectly, with some bad patterns missing; the version on the arXiv is correct.

It is easy to construct examples of involutions containing one of the bad patterns for which one can check explicitly that some vertex in the relevant Bruhat graph has too large a degree for rational smoothness, along the lines of the proof of Theorem 4.8.  The statement of this theorem is slightly more complicated than that of Theorem 4.8 since the bad patterns occurring in the statement of that theorem occur in clans corresponding to the full flag variety, which is clearly smooth.

In type $CI$ we have a similar criterion which is a bit more difficult to state.  Here, for the first time in the symmetric setting, smoothness and rational smoothness diverge.

\begin{theorem}[{\cite{M20'}}]
The variety $X_d$ is rationally smooth if and only if the clan $d$ takes the form $(\gamma_1,\gamma_0,\gamma_1'$), where $\gamma_1,\gamma_0,\gamma_1'$ are specified as follows.  First, $\gamma_0$ is either empty, $(1+-1),(1-+1)$, or $(1212)$. Next, $\gamma_1$ is a clan for type $AIII$ avoiding the bad patterns of Theorem 4.8, possibly followed by a string of distinct integers appearing only once in it if $\gamma_0$ is empty.  Finally, $\gamma_1'$ is chosen to make $d$ skew-symmetric.  $X_d$ is smooth if and only if $d$ takes the above form with $\gamma_0$ either empty or $1212$.  If $X_d$ is rationally singular this can be detected by Brion's criterion.
\end{theorem}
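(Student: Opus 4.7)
The plan is to adapt the two-pronged strategy of Theorem 4.8 to the skew-symmetric setting of type $CI$: proving rational smoothness when $d$ takes the stated form via an iterated fiber bundle construction similar to the one invoked at the end of the proof of Theorem 4.8, and proving rational singularity otherwise via Brion's criterion (Theorem 4.11), then refining the fiber bundle analysis to isolate the two middle patterns that force rational smoothness without smoothness.

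For the sufficiency direction, suppose $d=(\gamma_1,\gamma_0,\gamma_1')$ has the prescribed form. I would build an $\iota$-stable parabolic $Q\supset B$ of $G=Sp(2m,\mathbb C)$ from the nested structure of the equal-number intervals in $\gamma_1$, paired symmetrically with their mirror images in $\gamma_1'$, with a central block dictated by $\gamma_0$. Because $\gamma_1$ avoids the type $AIII$ bad patterns of Theorem 4.8, the nested-intervals condition used in the proof of that theorem holds in $\gamma_1$; combined with the skew-symmetry forced by the $\gamma_1'$ side, this yields a genuine $\iota$-stable $Q$. The projection $\pi:G/B\rightarrow G/Q$ then carries $\mathcal O_d$ onto a closed $K$-orbit $K\cdot\mathfrak q$ in $G/Q$, and $X_d=\pi^{-1}(\overline{K\cdot\mathfrak q})$ is an iterated fiber bundle with fiber the flag variety of a Levi factor of $Q$, which is smooth, when $\gamma_0$ is empty or equals $(1212)$. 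When $\gamma_0=(1+-1)$ or $(1-+1)$, the central block contributes an $Sp(4)$-factor to the Levi whose fiber is a rationally smooth but not smooth Schubert variety of rank one in type $C$; this gives rational smoothness without smoothness and accounts for the second part of the theorem.

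For the necessity direction, suppose $d$ does not take the stated form. Then either $\gamma_1$ includes one of the bad patterns of Theorem 4.8, or the middle $\gamma_0$ of $d$ is a four-entry skew-symmetric block outside the allowed list, such as $(1122)$, $(1221)$, $(+11-)$, $(-11+)$, $(+-+-)$, $(-+-+)$, or an analogous arrangement. In each case I would produce a closed orbit $\mathcal O_c$ below $\mathcal O_d$ by replacing every pair of equal numbers in $d$ by a pair of opposite signs chosen to keep the resulting clan skew-symmetric (so a pair straddling the midpoint becomes $\ldots+\ldots-\ldots$ or its mirror, and each off-centre pair is replaced simultaneously with its mirror image). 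Counting edges out of $c$ in $\Gamma(c,d)$, each corresponding to one of the ten Wyser moves or one of the four supplementary $CI$ moves going from $c$ to a vertex still below $d$, I would verify that $\deg(c)$ strictly exceeds $\ell(d)-\ell(c)$, applying Brion's criterion to conclude rational singularity.

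The main obstacle is the bookkeeping around the midpoint. The four supplementary moves of type $CI$ produce extra edges near the centre that have no analogue in type $AIII$, and they are exactly what makes the middle blocks $(1+-1)$, $(1-+1)$, and $(1212)$ "tolerable" rather than fatal; conversely, they are also what forces the other centre blocks to fail Brion's criterion. Tracking these carefully, especially at the seam between $\gamma_1$ and $\gamma_0$, is the delicate step. The distinction between smoothness and rational smoothness for the $(1\pm\mp 1)$ middle cases would be pinned down by identifying the central fibre with the known rationally smooth but not smooth rank-one type $C$ Schubert variety; the parallel computation in \cite{M20'} should then supply the remaining details.
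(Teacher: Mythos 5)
Your necessity direction (producing a closed orbit $\mathcal O_c$ below $\mathcal O_d$ by symmetrically converting number pairs to sign pairs and checking that its degree in $\Gamma(c,d)$ exceeds $\ell(d)-\ell(c)$) is the paper's argument, which the paper itself only sketches in one sentence. The genuine gap is in the other half. The mechanism you invoke for sufficiency --- $X_d$ as the preimage of a closed orbit $K\cdot\mathfrak q$ in $G/Q$ with fiber the full flag variety $Q/B$ of a Levi --- can only ever produce a \emph{smooth} total space, so it cannot account for the middle blocks $(1+-1)$ and $(1-+1)$, which are rationally smooth but singular. Your patch, that the central block ``contributes an $Sp(4)$-factor to the Levi whose fiber is a rationally smooth but not smooth rank-one type $C$ Schubert variety,'' is both structurally unsupported (the preimage construction never yields a proper Schubert variety of the Levi as fiber, and the central object here is the $GL(2,\mathbb C)$-orbit closure $X_{\gamma_0}$ in $Sp(4,\mathbb C)/B$, a symmetric variety, not a Schubert variety --- any identification of the two would itself need proof) and explicitly deferred to \cite{M20'}, which is precisely the content being asked for. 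The paper's route is different: it exhibits $X_d$ as a fiber bundle with \emph{smooth} fiber over the middle variety $X_{\gamma_0}$, uses the Leray-Hirsch theorem to reduce rational smoothness and smoothness of $X_d$ to those of $X_{\gamma_0}$ for the three nonempty middle blocks in $Sp(4,\mathbb C)$, and then settles those three cases by writing down explicit attractive slices in the sense of Brion: for $\gamma_0=(1+-1)$ (and symmetrically $(1-+1)$) a slice cut out by $xw-y^2z=0$, a quadric-cone singularity that is rationally smooth but not smooth, and for $\gamma_0=(1212)$ a smooth slice $x=0$. That explicit coordinate computation is exactly what separates smoothness from rational smoothness in this type, and it is absent from your proposal.

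A smaller but real problem is your case analysis for necessity. Central four-entry blocks such as $(1221)$, $(+11-)$, and $(-11+)$, which you list as ``outside the allowed list'' and propose to kill with Brion's criterion, in fact fit the stated form with $\gamma_0$ empty, since the clause allowing $\gamma_1$ to end in a string of distinct numbers appearing only once covers them (e.g.\ $(+11-)$ has $\gamma_1=(+1)$, $\gamma_0$ empty); the corresponding varieties are smooth, so the degree count you intend would not come out too large for them. The correct dichotomy is not ``bad pattern in $\gamma_1$ or bad central block'' but ``$d$ admits no decomposition of the stated form,'' and the seam bookkeeping you rightly flag as delicate has to be organized around that statement.
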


\begin{proof}
If $d$ does not take the given form then one argues as above that the degree of a suitable closed orbit $\mathcal O_c$ in the Bruhat graph $\Gamma(c,d)$ is too large.  If it does take this form then as above $X_d$ is seen to be a fiber bundle with smooth fiber over $X_{\gamma_0}$.  By the Leray-Hirsch Theorem \cite[p. 258]{S66} it suffices to verify the conclusion for $\gamma$ equal to any of the three nonempty possibilities for $\gamma_0$.  So let $G=\,$Sp$(4,\mathbb C)$ and fix a basis $\{e_1,e_2\}$ for a maximal isotropic subspace $S$ of $\mathbb C^4$, equipped with a nondegenerate skew-symmetric bilinear form $(\cdot,\cdot)$.  Let $f_1,f_2$ be the dual basis for an isotropic dual to $S$.  Then $G/B$ may be identified with the space of maximal isotropic flags $0\subset V_1\subset V_2$ in $\mathbb C^4$.  First let $\gamma =(1+-1)$.  The set of maximal isotropic flags for which $V_1$ is spanned by $e_1+xf_1+yf_2$ and $V_2$ is spanned by $V_1$ and $ze_2+yze_1+wf_2$ for some complex affine coordinates $x,y$ and projective coordinates $z,w$ is a slice in the sense of \cite[Definition 2.1]{Br99} to $X_{\gamma}$ at the point corresponding to $x=y=w=0,z=1$, where the coordinates lie on the variety with equation $xw-y^2z=0$.  A similar argument applies to the case $\gamma=(1-+1)$.  Finally, if $\gamma=(1212)$ then $X_{\gamma_0}$ admits such a slice with the equation $xw-y^2z=0$ replaced by $x=0$.  The result then follows by inspection.
\end{proof}

\subsection{Type $BD$} 

The last two types with the $K$-orbits parametrized by clans are $BDI$ and $DIII$.  In type $BDI$ we have

\begin{theorem}
The variety $X_d$ is rationally smooth if and only if the clan $d$ takes the form $(\gamma_1,\gamma_0,\gamma_1')$, and $\gamma_1,\gamma_0$, and $\gamma_1'$ are specified as follows.  First, $\gamma_0$ is either empty, $(1+-+1),(1-+-1)$, or $(12\pm12)$.  Next,  $\gamma_1$ is a clan for type $AIII$ avoiding the bad patterns of Theorem 4.8, possibly followed by a string of distinct numbers appearing only once in $\gamma_1$ and then a string of equal signs if $\gamma_0$ is empty.  Finally, $\gamma_1'$ is chosen to make $c$ orthosymmetric.  $X_d$ is smooth if and only if $\gamma_0$ is either empty, a string of equal signs, or $(12\pm12)$.  Once again Brion's criterion detects rational singularity in all cases.
\end{theorem}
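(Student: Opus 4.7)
The plan is to adapt the template of Theorem 4.15 for type $CI$, accommodating three new features of type $BDI$: the orthosymmetric (rather than skew-symmetric) ambient structure, the presence of an unpaired central sign when $n=p+q$ is odd, and the extra ``string of equal signs'' option at the end of $\gamma_1$.

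For the necessity direction, I would assume that $d$ fails to take the prescribed form and localize the obstruction---either $\gamma_1$ (as extracted from $d$) fails to avoid some bad pattern of Theorem 4.8, or the segment following $\gamma_1$ violates the ``distinct numbers then equal signs'' structure, or the central block $\gamma_0$ is not one of the four allowed options. In each case I would construct a closed orbit $\mathcal{O}_c\le\mathcal{O}_d$ by systematically replacing each pair of equal numbers in $d$ (including the mirrored pairs demanded by orthosymmetry) with alternating sign pairs, imitating the recipe from the proof of Theorem 4.8. The resulting clan $c$ contains only signs, possibly with a central pair of equal numbers when both $p$ and $q$ are odd, so $\mathcal{O}_c$ is closed. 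Counting the clans immediately above $c$ in $\Gamma(c,d)$---each obtained by reversing one of the sign-pair replacements---I would then verify that the degree of $c$ exceeds $\ell(d)-\ell(c)$, so that Brion's criterion (Theorem 4.6) precludes rational smoothness.

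For sufficiency, assuming $d=(\gamma_1,\gamma_0,\gamma_1')$ has the prescribed form, I would exhibit a two-stage $\iota$-stable parabolic structure. The outer parabolic $Q\supset B$ realizes $X_d$ as the total space of a fibration over a closed $K$-orbit in $G/Q$, whose fiber is a product of the flag variety of a rank-five orthogonal group (handling $\gamma_0$) with a Levi-type flag variety (handling $\gamma_1$). By the Leray-Hirsch Theorem~\cite[p.\ 258]{S66}, rational smoothness of $X_d$ reduces to rational smoothness of the $\gamma_1$-factor---supplied by Theorem 4.8, together with the observation that a trailing string of distinct numbers and equal signs contributes only smooth projective bundles---and of the five-dimensional core variety $X_{\gamma_0}$. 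For each nonempty option of $\gamma_0$ I would construct an attractive slice in the sense of \cite[Definition 2.1]{Br99} at the unique minimal closed orbit, paralleling the computation in the proof of Theorem 4.15: fix a basis of $\mathbb{C}^5$ diagonalizing the symmetric form, parametrize isotropic flags by affine-projective coordinates, and read off the defining equation of the slice from the orthogonality constraints. I expect $(1+-+1)$ and $(1-+-1)$ to cut out a quadric of the form $xw=y^2z$, giving a rationally smooth but singular point, while $(12+12)$ and $(12-12)$ should yield a linear slice, hence smoothness. The smoothness criterion of the theorem then drops out of this dichotomy combined with the manifestly smooth cases.

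The main obstacle will be the slice computations for the two length-five patterns containing equal numbers in the rank-five orthogonal setting. The unpaired central sign in $\gamma_0$ forces delicate bookkeeping on the middle isotropic subspace $V_2$ and its orthogonal $V_2^\perp$, and the symmetric form yields more intricate quadratic identities than the skew form used in Theorem 4.15. A secondary technical point is verifying that the ``string of equal signs'' tail of $\gamma_1$, allowed only when $\gamma_0$ is empty, genuinely corresponds to a sub-fibration with smooth Grassmannian-type factors inside the ambient orthogonal flag variety; this will need to be cross-checked against Remark 2.1 on the splitting of closed orbits under the $S(O(p,\mathbb{C})\times O(q,\mathbb{C}))$ action.
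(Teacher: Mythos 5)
Your proposal matches the paper's approach: the paper gives no separate argument for this theorem, saying only that the proof is similar to that of the type $CI$ result (Theorem 4.15) and citing \cite{M23}, and your outline---necessity via Brion's criterion at a suitable closed orbit (allowing the central pair of equal numbers when $p,q$ are both odd), sufficiency via an $\iota$-stable parabolic fibration, Leray--Hirsch, and attractive-slice computations for the nonempty central blocks in the rank-five orthogonal setting---is precisely that adaptation. One small correction: the fiber factor handling $\gamma_0$ must be the orbit closure $X_{\gamma_0}$ inside the flag variety of $SO(5,\mathbb C)$, not the full flag variety of that group (which would be smooth and erase the smooth/rationally smooth dichotomy), as your subsequent reduction to $X_{\gamma_0}$ already presumes.
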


\noindent The proof is similar to that of the previous result; see \cite{M23}.
\medskip
In type $DIII$ we have

\begin{theorem}[{\cite{MT09}}]
The variety $X_d$ is smooth if and only if it is rationally smooth, or if and only if the clan $d$ takes either the form $(\gamma_1,\pm c_o(m',m'),\gamma_1')$ or $(1,\gamma_1,2,1\gamma_1',2)$, where  in the first case $\gamma_1$ is a clan (in type $AIII$ as usual) avoiding the bad patterns of Theorem 4.1 and the sign of $c_o(m',m')$ is chosen to make $d$ even; in the second case $\gamma_1$ avoids the bad patterns of Theorem 4.1 {\sl and} the patterns $(\pm33),(33\pm)$, and $(2233)$, and in both cases $\gamma_1'$ is chosen to make $d$ skew-symmetric.  Brion's criterion detects rational singularity in all cases.
\end{theorem}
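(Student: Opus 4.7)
The plan is to follow the template established by Theorems 4.11 and 4.12 (for types $CII$ and $CI$) and by Theorem 4.13 (for type $BDI$): split the argument into a forward direction establishing rational singularity via Brion's criterion whenever $d$ fails to take one of the two prescribed forms, and a backward direction exhibiting $X_d$ as a smooth fiber bundle over a partial flag variety whenever $d$ does take one of those forms. Because type $D$ is simply laced, smoothness and rational smoothness will coincide for the resulting $X_d$ once the fiber-bundle description is in place, which takes care of the ``smooth iff rationally smooth'' clause of the statement.

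For the backward direction, suppose first that $d=(\gamma_1,\pm c_o(m',m'),\gamma_1')$ with $\gamma_1$ avoiding the type $AIII$ bad patterns of Theorem 4.8. The inner block $\pm c_o(m',m')$ is the clan of the open orbit for the analogous type $DIII$ situation on $2m'$ coordinates, with sign chosen so that the whole clan remains even skew-symmetric. I would produce a $\iota$-stable parabolic $Q\supseteq B$ of $G$ whose Levi factor $L$ has type $A\times D$ and for which the projection $\pi\colon G/B\to G/Q$ sends $\mathcal O_d$ onto the open $(K\cap L)$-orbit on $G/Q$; by \cite{T05} the fiber over this $K$-orbit is then a type $AIII$ symmetric variety in $Q/B$ whose smoothness is governed by the bad patterns of Theorem 4.8 avoided by $\gamma_1$, and this realizes $X_d$ as an iterated smooth fiber bundle, hence smooth. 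For the second form $d=(1,\gamma_1,2,1,\gamma_1',2)$, the nested outermost pairs of $1$'s and $2$'s encode a slightly smaller $\iota$-stable parabolic whose Levi factor is a product of a $GL$ block with a smaller type $DIII$ factor; the same fibration argument goes through provided the three additional patterns $(\pm33),(33\pm),(2233)$ are avoided.

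For the forward direction, if $d$ does not take either form, I would locate a bad subpattern in the appropriate region, namely a Theorem 4.8 pattern inside $\gamma_1$, or (in the second-form candidate) one of the three extra patterns interacting with the wrapper $1,\dots,2,1,\dots,2$. Following the device used in the proofs of Theorems 4.8 and 4.11, I would produce a closed orbit $\mathcal O_c$ below $\mathcal O_d$ by replacing each pair of equal numbers in $d$ by a pair of opposite signs, the signs chosen to preserve the even skew-symmetry condition defining the type $DIII$ parameter set. Adapting the edge counts carried out for types $AIII$, $CII$, and $BDI$ in \cite{M09,MT09,M23}, I would enumerate the edges of the Bruhat graph $\Gamma(c,d)$ incident to $c$ and show that their total strictly exceeds $\ell(d)-\ell(c)$, so that Brion's criterion (Theorem 4.6) forces $X_d$ to be rationally singular at $\mathcal O_c$.

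The main obstacle I anticipate is the second form $(1,\gamma_1,2,1,\gamma_1',2)$ and the necessity of the extra patterns $(\pm33),(33\pm),(2233)$. Once the outer nested $1$'s and $2$'s are stripped off, the inner structure degenerates in a way that is subtler than either pure type $AIII$ or pure type $DIII$: certain configurations that would be harmless in the unwrapped type $AIII$ setting of Theorem 4.8 become genuine obstructions to the smoothness of the inner fiber, and it is these that force the three extra patterns into the list. Verifying exhaustively that these patterns, together with the type $AIII$ bad ones, precisely capture smoothness in the wrapped regime will require a careful case analysis of the induced Bruhat graph around a well-chosen closed suborbit, in the spirit of the type $BDI$ arguments of \cite{M23}; at each step of this analysis one must also check that the even skew-symmetry parity condition is preserved, so that the witness $\mathcal O_c$ actually lies in the type $DIII$ Bruhat poset rather than merely in the ambient type $CI$ one.
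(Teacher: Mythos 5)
Your overall architecture --- Brion's criterion at a closed orbit below $d$ when the clan fails to have one of the two prescribed forms, and an iterated fibration built from a $\iota$-stable parabolic (via \cite{T05}) when it does --- is exactly the template this paper uses for the neighbouring results (the $AIII$, $CII$, $CI$ and $BDI$ theorems); for the $DIII$ statement itself the paper offers no argument at all, deferring entirely to \cite{MT09}, whose proof follows the same two-pronged scheme. So your plan is the right one in outline.

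Two things need repair, one of them a genuine error. First, the appeal to simple-lacedness to dispose of the ``smooth iff rationally smooth'' clause is not valid: Theorem 4.5 (Carrell--Kuttler) is a statement about Schubert varieties, and the analogous statement for symmetric varieties is false even in type $A$ --- type $AI$ lives in the simply laced group $GL(n,\mathbb C)$, yet by Theorem 4.10 an involution such as $[1324]$ gives a rationally smooth but singular orbit closure. Fortunately you do not need the shortcut: the clause follows from your own two directions (genuine smoothness from the fibration on the good forms, rational singularity from Brion otherwise), which is how the paper's sibling proofs obtain it; relatedly, your description of the fibration is garbled --- in the paper's $AIII$ argument the closed $K$-orbit in $G/Q$ is the \emph{base} (a partial flag variety for $K$) and the open-orbit closure inside the Levi's flag variety $Q/B$ is the \emph{fiber}, not an orbit ``on $G/Q$''. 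Second, the distinctive content of this theorem --- that in the wrapped form $(1,\gamma_1,2,1\gamma_1',2)$ the three extra patterns $(\pm33),(33\pm),(2233)$ are precisely what must be excluded, and that every clan outside the two forms admits an \emph{even} all-signs clan $c\le d$ whose degree in $\Gamma(c,d)$ exceeds $\ell(d)-\ell(c)$ (the evenness constraint is exactly what distinguishes this from type $CI$, where the supplementary moves and the middle blocks $(1+-1),(1-+1)$ are available) --- is the part you explicitly defer to ``a careful case analysis''. That case analysis is the proof; as written, your proposal is a correct plan in the spirit of \cite{MT09} rather than a complete argument.
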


\section{Richardson varieties and clans in type $AIII$}
\bigskip
Since the appearance of \cite{M09}, Wyser and Woo have further studied symmetric varieties in type $AIII$ in \cite{WW15}, studying the singular locus of a singular variety and also geometric properties other than rational singularity.  We conclude this chapter with a brief account of their work.  

We begin with some general remarks about Richardson varieties.  Fix a complex semisimple group $G$ with Borel subgroup $B$ and Weyl group $W$.  Choose a Borel subgroup $B^-$ opposite to $B$ in $G$ (so that the intersection $B\cap B'$ is a maximal torus in $G$).   Elements $w\in W$ then parametrize Schubert cells $C_w$, Schubert varieties $X_w$, opposite Schubert cells $C_w^-$ (that is, $B^-$-orbits in $G/B$), and opposite Schubert varieties $X_w^-$.  The intersection $X_u\cap X_v^-$ of a Schubert variety and an opposite Schubert variety is called a {\sl Richardson variety}.  Now we briefly consider properties more general than smoothness and rational smoothness.   A property $\mathcal P$ of the points in a variety $V$ is called {\sl local, open}, and {\sl multiplicative} if it depends only on the local ring at a point, holds on a nonempty open subset of $V$, and holds at every points of a product $X\times Y$ if and only if it holds at every point of $X$ and $Y$.  Then we have

\begin{theorem} \cite{KWY13}
An open local multiplicative property $\mathcal P$ holds at all points of a Richardson variety $X_u\cap X^-_v$ with $v\le u$ if and only if it holds on $X_u$ along $C_v$ and holds on $X^-_v$ along $C^-_u$, or equivalently holds on $X^-_v$ along $C'_u$ and holds on $X^-_{w_0u}$ along $C^-_{w_0v}$.
\end{theorem}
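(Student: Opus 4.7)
The plan is to reduce the question to torus-fixed points and invoke the Kazhdan--Lusztig local product decomposition at each such point. Let $T = B \cap B^-$ be the maximal torus. The first tool I would use is that an open local property $\mathcal P$ holds everywhere on a projective $T$-stable variety $Y$ if and only if it holds at every $T$-fixed point of $Y$: the failure locus of $\mathcal P$ is closed and $T$-stable, so if nonempty it contains a $T$-fixed point by Borel's fixed-point theorem. Applied to $Y = X_u \cap X_v^-$, whose $T$-fixed points are exactly $\{wB : v \le w \le u\}$, this reduces everything to checking $\mathcal P$ at each $wB$ with $v \le w \le u$.

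Second, at any $T$-fixed point $wB$ the cells $C_w$ and $C_w^-$ meet transversally at $wB$ and together span the tangent space to $G/B$ there. The Kazhdan--Lusztig local product structure furnishes a $T$-stable open affine neighborhood $N_w$ of $wB$ and a $T$-equivariant isomorphism $N_w \cong A_w \times A_w^-$ with $A_w \subset C_w$ and $A_w^- \subset C_w^-$, under which $X_u$ (containing $C_w$ for $w \le u$) factors as $A_w \times (A_w^- \cap X_u)$, while $X_v^-$ (containing $C_w^-$ for $w \ge v$) factors as $(A_w \cap X_v^-) \times A_w^-$. Hence $N_w \cap (X_u \cap X_v^-) \cong (A_w \cap X_v^-) \times (A_w^- \cap X_u)$, whose two factors carry the germs of $X_v^-$ and $X_u$ at $wB$ respectively. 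By multiplicativity of $\mathcal P$, it holds at $wB$ in $X_u \cap X_v^-$ if and only if it holds at $wB$ in both $X_u$ and $X_v^-$.

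Third, I would argue that checking $\mathcal P$ at all $wB$ in $[v,u]$ collapses to a check at the extremes. The failure locus of $\mathcal P$ on $X_u$ is closed and $B$-stable, hence a union $\bigcup_i X_{w_i}$ of Schubert subvarieties with $w_i < u$; thus $\mathcal P$ holds at $vB$ in $X_u$ if and only if no $w_i$ satisfies $v \le w_i$, which automatically forces $\mathcal P$ to hold at every $wB$ with $v \le w \le u$ in $X_u$. Symmetrically, the failure locus of $\mathcal P$ on $X_v^-$ is a union of opposite Schubert subvarieties $X_{w'_j}^-$ with $w'_j > v$, and $\mathcal P$ holds at $uB$ in $X_v^-$ if and only if it holds at every $wB$ with $v \le w \le u$ in $X_v^-$. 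Since ``holds at $vB$ in $X_u$'' translates to ``holds on $X_u$ along $C_v$'' (the validity locus is open and $B$-stable, and $C_v = B \cdot vB$), and similarly ``holds at $uB$ in $X_v^-$'' is ``holds on $X_v^-$ along $C_u^-$'', the first stated criterion follows.

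The equivalent second criterion is obtained by applying the $w_0$-symmetry of the flag variety, which swaps Schubert and opposite Schubert varieties via $X_u \leftrightarrow X_{w_0 u}^-$ and $C_v^- \leftrightarrow C_{w_0 v}$, carrying one formulation into the other. I expect the main obstacle to be the simultaneous product decomposition of $N_w$ compatibly with both $X_u$ (which is only $B$-stable) and $X_v^-$ (which is only $B^-$-stable); the reason it goes through is that the slices $A_w$ and $A_w^-$ are both $T$-stable and transverse at $wB$, and that $X_u$ and $X_v^-$ are each $T$-stable with the additional invariance pushing the local geometry into the claimed product shape. This transversality lemma, essentially the Kazhdan--Lusztig slice lemma, is what underlies the reference \cite{KWY13} and is the one nontrivial input; everything else in the argument is formal.
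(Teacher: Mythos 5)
First, a point of comparison: the paper does not prove this statement at all --- it is imported verbatim from \cite{KWY13} --- so the only benchmark is the argument of that reference, and your outline (reduce to $T$-fixed points via Borel's fixed-point theorem applied to the closed $T$-stable failure locus, factor locally at each $wB$, invoke multiplicativity, collapse the checks over $[v,u]$ to the extreme cells using $B$- and $B^-$-stability of the failure loci in $X_u$ and $X_v^-$, and get the second formulation from the $w_0$-symmetry) is exactly the right architecture and is essentially how the cited source proceeds. Those surrounding steps are correct as you state them.

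The genuine gap is at the step you yourself flag, and it is larger than you suggest. The classical Kazhdan--Lusztig slice lemma is one-sided: writing $N_w\cong (U\cap wU^-w^{-1})\times(U^-\cap wU^-w^{-1})$, the parametrization $(g,h)\mapsto ghwB$ makes $X_u\cap N_w$ a coordinate subproduct (by $U$-stability of $X_u$), while it is the \emph{other} parametrization $(g,h)\mapsto hgwB$ that does this for $X_v^-$ (by $U^-$-stability); since the two unipotent factors do not commute, these are different coordinate systems, and ``$A_w$, $A_w^-$ are $T$-stable and transverse at $wB$'' does not by itself produce a single decomposition of $N_w$ in which both $X_u$ and $X_v^-$ (hence their intersection) factor. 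Reconciling the two factorizations --- i.e., proving that near $wB$ the Richardson variety really is a product of the slice of $X_u$ in $C_w^-$ and the slice of $X_v^-$ in $C_w$ --- is precisely the technical heart of \cite{KWY13}, not a previously available lemma, so as written your proposal assumes the crux of the theorem it sets out to prove. Two smaller wrinkles you should also close: you use multiplicativity pointwise (``$\mathcal P$ at $wB$ in the product iff at $wB$ in each factor''), whereas the definition in the paper is stated for all points of a product, and you need to know $\mathcal P$ holds at every point of the smooth affine factors $A_w\cong\mathbb A^{\ell(w)}$ in order to pass between the germ of the Richardson variety (a product of two slices) and the germs of $X_u$ and $X_v^-$ (each a slice times an affine space); both can be repaired (openness plus homogeneity of $\mathbb A^n$, and the $B$-stability collapsing), but they are not automatic from the definitions as quoted.
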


\noindent and

\begin{theorem} \cite{KWY13}
If $\Sigma(X)$ denotes the singular locus of a variety $X$, then we have $\Sigma(X_u\cap X_v^-) = (\Sigma(X_v^-)\cap X_u)\cup(\Sigma(X_u)\cap X_v^-)$.  In particular, the singular locus of a Richardson variety is a union of Richardson varieties.
\end{theorem}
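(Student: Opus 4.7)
The plan is to reduce the displayed identity to the following pointwise criterion: for every $p\in X_u\cap X_v^-$, the Richardson variety $X_u\cap X_v^-$ is smooth at $p$ if and only if both $X_u$ and $X_v^-$ are smooth at $p$. Granting this criterion, $p\in\Sigma(X_u\cap X_v^-)$ if and only if $p\in X_u\cap X_v^-$ and at least one of $X_u,X_v^-$ is singular at $p$, and this is exactly the set equality $\Sigma(X_u\cap X_v^-)=(\Sigma(X_v^-)\cap X_u)\cup(\Sigma(X_u)\cap X_v^-)$.

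To prove the pointwise criterion I would use a local product structure for $G/B$ near $p$. Let $p\in C_x$, necessarily with $v\le x\le u$. The opposite unipotent radicals produce a $T$-equivariant Zariski open neighborhood $V$ of $p$ in $G/B$ together with an isomorphism $V\cong V^+\times V^-$, where the factor $V^+$ lies along the cell $C_x$ through $p$ (the ``positive'' direction for $B$) and $V^-$ lies along the opposite cell $C_x^-$ through $p$ (the ``negative'' direction for $B^-$). Because $X_u$ is $B$-stable and contains $C_x$, the intersection $V\cap X_u$ takes the form $V^+\times S_u$ for a transverse slice $S_u\subset V^-$, which can be identified with an attractive slice in the sense of \cite[Prop.\ 2.1]{Br99}; dually, $V\cap X_v^-=S_v\times V^-$ for a transverse slice $S_v\subset V^+$. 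Consequently
\[
V\cap X_u\cap X_v^-\cong S_v\times S_u,
\]
and since smoothness is a local property that is multiplicative under products, this is smooth at $p$ if and only if $S_u$ and $S_v$ are each smooth at the image of $p$, if and only if $X_u$ and $X_v^-$ are each smooth at $p$. The main obstacle is the justification of this simultaneous product decomposition of $X_u$ and $X_v^-$ in a common neighborhood of $p$; this is the transversality phenomenon underlying Theorem 5.1 and can equivalently be expressed by the Tor-independence of $\mathcal O_{X_u}$ and $\mathcal O_{X_v^-}$ inside $\mathcal O_{G/B}$, which itself rests on the Cohen--Macaulay property of Schubert varieties together with the fact that $X_u\cap X_v^-$ has the expected dimension $\ell(u)-\ell(v)$.

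For the ``in particular'' assertion, $\Sigma(X_u)$ is closed in $X_u$ and invariant under the $B$-action, since smoothness is intrinsic to the local ring and $B$ acts on $X_u$ by restrictions of automorphisms of $G/B$. Hence $\Sigma(X_u)$ is a finite union $\bigcup_{u'}X_{u'}$ of Schubert varieties with each $u'<u$, and dually $\Sigma(X_v^-)=\bigcup_{v'}X_{v'}^-$. Substituting into the identity just proved expresses $\Sigma(X_u\cap X_v^-)$ as a finite union of sets of the form $X_{u'}\cap X_v^-$ and $X_u\cap X_{v'}^-$, each of which is itself a Richardson variety.
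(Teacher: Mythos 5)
The paper itself does not prove this statement; it quotes it from \cite{KWY13}, so your proposal has to be judged against that source's argument, whose architecture you have correctly guessed: a pointwise criterion (the Richardson variety is smooth at $p$ iff both $X_u$ and $X_v^-$ are), plus $B$- and $B^-$-stability of $\Sigma(X_u)$ and $\Sigma(X_v^-)$ for the ``in particular'' clause. The gap is that the one step carrying real content, your simultaneous product decomposition, is asserted rather than proved, and as formulated it only makes sense when $p$ is a $T$-fixed point. For a general point $p$ of the open stratum $C_x\cap C_y^-$ with $v\le y<x\le u$, the opposite cell through $p$ is $C_y^-$, not $C_x^-$, and the two cells through $p$ are not complementary: they meet along $C_x\cap C_y^-$, of dimension $\ell(x)-\ell(y)>0$, so there is no neighborhood $V\cong V^+\times V^-$ with $V^+$ along $C_x$ and $V^-$ along the $B^-$-cell through $p$. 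Nor can you reduce to $T$-fixed points by equivariance, since only $T$ preserves both $X_u$ and $X_v^-$. The correct local model (the heart of \cite{KWY13}) has three factors: near $p\in C_x\cap C_y^-$ the Richardson variety is locally a product of a slice of $X_u$ at $e_x$, the smooth open stratum $C_x\cap C_y^-$, and a slice of $X_v^-$ at $e_y$; alternatively one can argue from transversality of opposite cells, $T_pC_x+T_pC_y^-=T_p(G/B)$ (Richardson, Deodhar), together with reducedness of the scheme-theoretic intersection. Even at $p=e_x$ your two-factor claim is not automatic: the Kazhdan--Lusztig slice lemma gives $X_u\cap V=V^+\times S_u$ from $(U\cap xU^-x^{-1})$-equivariance in the parametrization $(a,b)\mapsto ab\cdot e_x$, whereas $B^-$-stability of $X_v^-$ naturally yields a product only in the opposite ordering $(b,a)\mapsto ba\cdot e_x$; getting both decompositions in a single coordinate system is exactly what must be proved.

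Moreover, the justification you propose for this decomposition --- Tor-independence of $\mathcal O_{X_u}$ and $\mathcal O_{X_v^-}$, Cohen--Macaulayness, and expected dimension --- cannot deliver it. Those properties hold here, but they are insufficient in principle: the curves $y=x^2$ and $y=0$ in the affine plane are smooth, Cohen--Macaulay, Tor-independent and meet in the expected dimension, yet their scheme intersection is non-reduced and admits no product structure adapted to both; so ``smooth meets smooth in smooth'' is a statement about the special relative position of $B$- and $B^-$-orbits, and the genuine inputs are the transversality of opposite cells and the reducedness of $X_u\cap X_v^-$ (Richardson; or via Frobenius splitting), neither of which enters your argument. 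Reducedness is also needed even to identify the set-theoretic Richardson variety locally with the product of slices, and it is used in the converse direction of your pointwise criterion (singularity of $X_u$ at $p$ forces $\dim T_p(X_u\cap X_v^-)>\ell(u)-\ell(v)$ only after one knows the intersection is reduced, since Zariski tangent spaces are computed on the scheme-theoretic intersection). Once one of these routes is supplied, the remainder of your proposal --- deducing the displayed set equality from the pointwise criterion, and writing $\Sigma(X_u)$, $\Sigma(X_v^-)$ as unions of Schubert and opposite Schubert varieties to get the final assertion --- is correct.
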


The connection between symmetric varieties in type $AIII$ and Richardson varieties arises from the following result.  To state it we need some notation.  Given a clan $c$ avoiding $1212$, we attach to it a pair $u(c),v(c)$ of permutations as follows.  The one-line notation of $v(c)$ is obtained from $c$ by first listing in ascending order the positions of $c$ containing a $+$ or the first occurrence of a number and then listing in ascending order the position with a $-$ or the second occurrence of a number.  Similarly, $u(c)$ is obtained by first listing in ascending order the position of $c$ with a $+$ or the second occurrence of a number, followed by listing in ascending order the positions with a $-$ or the first occurrence of a number.  For example, if $c=(12+-12)$ then $v(c)=123456,u(c)=356124$.  If $c$ has signature $(p,q)$ then denote by $w_0^K$ the long element of $W_K=S_p\times S_q$, reversing the indices $1,\ldots,p$ and $p+1,\ldots,p+q$.  Then we have

\begin{theorem} \cite{W13} 
With notation as above the variety $X(c)$ coincides with the Richardson variety $X_u\cap X'_v$, where $u=w_0^Ku(c)^{-1},v=v(c)^{-1}$.  Moreover the permutations $u,v$ are Grassmannian (that is, they have just one possible descent, in position $p$).
\end{theorem}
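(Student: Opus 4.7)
The plan is to realize $X(c)$ as an intersection of two Schubert-type loci by choosing the reference flags adapted to $P$ and $Q$ respectively. Fix an ordered basis $v_1,\ldots,v_n$ such that $P=\mathrm{span}(v_1,\ldots,v_p)$ and $Q=\mathrm{span}(v_{p+1},\ldots,v_n)$, and let $E_\bullet$ be the standard flag $E_j=\mathrm{span}(v_1,\ldots,v_j)$ with stabilizer $B$. For the opposite flag I would take $F_\bullet$ with $F_j=\mathrm{span}(v_{n},\ldots,v_{n-j+1})$, so that its stabilizer is the opposite Borel $B^-$ and $F_{q}=Q$. By definition of $\mathcal O_c$, the closure $X(c)$ is the set of flags $V_\bullet$ satisfying $\dim(V_i\cap P)\ge\alpha_i(c)$ and $\dim(V_i\cap Q)\ge\beta_i(c)$ for every $i$, where $\alpha_i(c),\beta_i(c)$ are the $P$- and $Q$-counts defined in Section~2.

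Next, I would interpret each family of inequalities as Grassmannian Schubert conditions. Since all the constraints on the $P$-side involve only the single subspace $E_p=P$ (varying $V_i$ but fixing $j=p$), the locus they cut out is the preimage in $G/B$ of a Schubert variety in $\mathrm{Gr}(p,n)$; in terms of $B$-orbits this is $X_u$ for a Grassmannian permutation $u$ with descent only in position $p$. Parallel reasoning for the $Q$-side produces an opposite Schubert variety $X_v^-$ with $v$ Grassmannian, descent at $p$. Thus the Grassmannian property and the inclusion $X(c)\subseteq X_u\cap X_v^-$ are formal once the right flags are in place; what remains is to identify the two permutations and to prove the reverse inclusion.

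To pin down $u$ and $v$, I would unpack the rank function of a Grassmannian permutation $\sigma$ with descent at $p$: the Schubert condition is $\dim(V_i\cap E_p)\ge \#\{k\le p:\sigma(k)\le i\}$. Matching this with $\alpha_i(c)$ yields a permutation whose first $p$ values, in increasing order, are exactly the positions of $c$ that contribute to the $P$-dimension (the $+$ signs together with one member of each repeated-number pair), and whose last $q$ values list the remaining positions in increasing order. That is precisely the recipe producing $u(c)$; the auxiliary conjugation by $w_0^K$ and the passage to $u(c)^{-1}$ are the bookkeeping converting the Grassmannian data (a $p$-subset of $\{1,\ldots,n\}$) into an element of $S_n/W_K$ matched with the Bruhat-order convention $X_u\cap X_v^-\ne\emptyset\iff v\le u$ used for Richardson varieties. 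The same argument with the roles of $P$ and $Q$ swapped, and with the opposite flag $F_\bullet$, yields $v=v(c)^{-1}$.

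The main obstacle, and the place where the hypothesis that $c$ avoids $1212$ is essential, is the reverse inclusion $X_u\cap X_v^-\subseteq X(c)$. In general the $K$-orbit closures in $G/B$ are cut out not only by the individual rank conditions $\dim(V_i\cap P)\ge\alpha_i(c)$ and $\dim(V_i\cap Q)\ge\beta_i(c)$, but also by additional mixed conditions coming from the matchings of equal numbers — informally, a pair of equal numbers at positions $a<b$ links a subspace of $V_a\cap P$ with a subspace of $V_b\cap Q$, and two interlaced pairs (a $1212$) would force such a coupling that cannot be expressed through separate $P$- and $Q$-conditions. I would show that when $c$ avoids $1212$ the matched pairs are either nested or disjoint, so the linking across pairs can be realized by a $K$-translate of any flag already satisfying the separate rank conditions; hence the separate conditions defining $X_u\cap X_v^-$ suffice to force the flag into $X(c)$. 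Once this decoupling lemma is in place, comparing dimensions of $X(c)$ (via the formula of Section~3) with the codimensions of $X_u$ and $X_v^-$ in $G/B$ verifies equality and completes the proof.
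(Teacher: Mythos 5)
The paper does not prove this statement at all: it is quoted from Wyser \cite{W13}, so there is no internal proof to compare against. Measured against what an actual proof requires, your outline has the right overall architecture (semicontinuity of the rank functions $\dim(V_i\cap P)$, $\dim(V_i\cap Q)$ gives $X(c)\subseteq$ the locus cut out by the separate $P$- and $Q$-conditions; identify that locus as a Richardson variety; use the combinatorial dimension formulas), but two of your steps are genuinely off. First, the $P$-side conditions constrain \emph{every} $V_i$ against the \emph{single} fixed subspace $P=E_p$; they are not conditions on $V_p$ against a full reference flag, so the locus they cut out is not the preimage under $G/B\to \mathrm{Gr}(p,n)$ of a Grassmannian Schubert variety, and the permutation indexing it is not Grassmannian on the nose. (Its essential conditions sit in a single column, which is why the statement involves the inverses $u(c)^{-1}$, $v(c)^{-1}$ and the twist by $w_0^K$; your justification of the ``descent only at $p$'' claim rests on the incorrect preimage picture, and this is exactly the bookkeeping you defer.)

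Second, and more seriously, your mechanism for the reverse inclusion does not work as stated. Saying that a flag satisfying the separate rank conditions ``can be realized by a $K$-translate'' of something in $X(c)$ proves nothing: $X(c)$ is $K$-stable, so translating by $K$ never changes membership, and to put such a flag into $X(c)$ directly you would have to exhibit it as a limit of flags in $\mathcal O_c$, which is precisely the content of the theorem and is not supplied by the nested/disjoint structure of the pairs alone. The workable route is the one you relegate to a final redundant check: prove only the easy inclusion $X(c)\subseteq X_u\cap X_v^-$ (valid for every clan), invoke the fact that a Richardson variety is irreducible of dimension $\ell(u)-\ell(v)$, and show via the rank formula of Section 3 that this dimension equals $\dim X(c)$ exactly when $c$ avoids $1212$; irreducibility plus equal dimension then forces equality. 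As written, you never invoke irreducibility of the Richardson variety, without which the dimension comparison yields nothing, and the ``decoupling lemma'' that your argument actually leans on is unproved and, in the form sketched, unprovable. So the proposal is not yet a proof, though it could be repaired by reorganizing it along the containment--irreducibility--dimension lines just described.
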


Now it is well known that Kazhdan-Lusztig polynomials of Grassmannian permutations can be computed by Lascoux-Sch\" utzenberger path diagrams  (see \cite{BL00,WW15}).  Thus one can use path diagrams to determine which of the $1212$-avoiding clans $d$ correspond to rationally smooth varieties $X_d$.  This is done by Woo and Wyser in \cite{WW15}, thereby recovering the criterion of \cite{M09}, bearing in mind that clans containing $1212$ correspond to rationally singular varieties.  Woo and Wyser  then go on to compute the singular locus of $X_d$ whenever $d$ avoids $1212$, using path diagrams.   They then consider another measure of singularity, namely lci-ness.  Recall that a variety or scheme $X$ is said to be {\sl lci at the point $P$} if the local ring of $X$ at $p$ is a local complete intersection, so that the maximal ideal at this point is generated by a regular sequence.  Using an unpublished criterion of Darayon for the Schubert variety of a Grassmannian permutation to be lci at every point based on its path diagram, Woo and Wyser produce the following list of bad patterns for lci-ness.

\begin{theorem} \cite{WW15}
If $d$ avoids $1212$, then $X_d$ is lci at all points if and only if $d$ avoids the patterns in the following list, together with their negatives (obtained by changing all the signs).
$(1++-1),(1+--1),(1-22+1),(1++221),(1+-221),\hfil\linebreak
(122--1),(122+-1),(12+2-1),(1+2-21),(1+23321),(12332-1),(1-22331),\hfil\linebreak(12+2331),(122+331),(1223-31),
(12233+1),(1223-31),(12233+1),(12332441),\hfil\linebreak(12234431),(12233441)$

\end{theorem}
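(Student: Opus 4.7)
The plan is to combine the Richardson identification of Theorem 5.3 with Darayon's path-diagram criterion. Since $d$ avoids $1212$ by hypothesis, Theorem 5.3 identifies $X_d$ with the Richardson variety $X_u \cap X^-_v$, where $u = w_0^K u(d)^{-1}$ and $v = v(d)^{-1}$ are Grassmannian with a single descent in position $p$. Being a local complete intersection is an open, local, multiplicative property: it is defined in terms of the local ring, holds on a dense open subset of any variety, and passes to products. Theorem 5.1 therefore reduces the statement ``$X_d$ is lci at every point'' to lci-ness of $X_u$ along $C_v$ and of $X^-_v$ along $C^-_u$; since $u$ and $v$ are Grassmannian, one can then either promote these to the condition that each of $X_u$ and $X_v$ is lci at every point, or else apply a cellular refinement of Darayon's criterion as in \cite{WW15}.

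Next I would invoke Darayon's criterion: a Schubert variety for a Grassmannian permutation is lci at every point if and only if its Lascoux-Sch\"utzenberger path diagram avoids a prescribed finite list of configurations. The listing conventions of Theorem 5.3 (sorting the positions of $+$ together with the first occurrences of numbers, then the positions of $-$ together with the second occurrences, to read off $v(d)$, and the analogous recipe with first and second occurrences swapped to read off $u(d)$) give an explicit way to produce the path diagrams of $u$ and $v$ directly from $d$, so each short subsequence of $d$ produces a local configuration on each path. The $\pm$ symmetry in the theorem's pattern list reflects the interchange of $u$ and $v$, which at the level of clans is the interchange of first and second occurrences of numbers and hence swaps $+$ with $-$.

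The combinatorial heart of the proof is this translation between Darayon's forbidden configurations and clan patterns. The main obstacle is carrying the dictionary out completely and without redundancy: one must verify both that every forbidden path configuration in either $u$ or $v$ is realized by an inclusion of one of the listed clan patterns, and that each listed pattern genuinely produces a forbidden configuration on at least one of the two paths. The $1212$-avoidance hypothesis does significant work here, because it forces pairs of equal numbers in $d$ to be nested rather than crossing, so the forbidden path configurations arise in a controlled way from pairs of nested intervals with signs possibly threaded through them; this explains both the shape and the length distribution of the patterns in the theorem statement. Once the dictionary is set up the remaining content is direct enumeration, as carried out in \cite{WW15}.
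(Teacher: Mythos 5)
Your proposal follows essentially the same route as the paper, which itself only sketches the argument and defers to \cite{WW15}: identify $X_d$ with the Richardson variety $X_u\cap X_v^-$ of Theorem 5.3 for Grassmannian $u,v$, use Theorem 5.1 (lci-ness being local, open, and multiplicative) to reduce ``lci at all points'' to lci-ness of the two Schubert varieties along the cells $C_v$ and $C_u^-$, and then translate Darayon's path-diagram criterion into the stated list of clan patterns, with the actual enumeration left to \cite{WW15} exactly as the paper does. One caution: your parenthetical suggestion to ``promote'' this to global lci-ness of $X_u$ and of $X_v^-$ is not an equivalent reformulation (it would impose a strictly stronger condition than lci-ness of $X_d$); the correct reduction is the cellular one you also name, namely lci-ness along $C_v$ and $C_u^-$, which is what the pattern dictionary must capture.
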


The non-lci locus of a $1212$-avoiding variety, like its rationally singular locus, can be computed from its path diagram.  Using Macaulay 2, Woo and Wyser have studied which $1212$-containing varieties fail to be lci at some point.  They found that non-lci-ness is characterized by pattern avoidance for $p+q\le8$, for both $1212$-avoiding and $1212$-containing varieties alike, but were unable to push the computations far enough to fully cover even the case $p=7,q=2$, though they did conjecture that non-lci-ness in general can be characterized by pattern avoidance.  They also studied Gorensteinness of varieties, but found that this did not seem to be characterized by pattern avoidance, in either the $1212$-avoiding or $1212$-containing cases, though it can be read off from the path diagram; the lack of a pattern avoidance criterion is for this property is known for Schubert varieties \cite{WY08}.  %Finally, they go on to show how to compute Kazhdan-Lusztig polynomials $P_{v,w}$ for cograssmannian permutations $v,w$ (ones whose one-line notations contain at most one right ascent) with $v\le w$ from the path diagram.  They conclude \cite{WW15} with a couple of useful examples, though they make a couple of mistakes in them:  the polynomial $P_{954218763,986437521}(q)$ should be $1+2q+q^2+q^3+q^4$, not $1+2q+3q^2+2q^3+q^4+q^5$, and the polynomial $P_{86427531,87645321}(q)$ should be $1+2q+q^2+q^3$, not $1+2q+q^2$.  Finally, the Kazhdan-Lusztig polynomial $P_{\tau,\gamma}$ attached to the clans $\tau,\gamma$ defined on the next to last page of the paper should be $(1+2q+q^2+q^3)(1+q) = 1+3q+3q^2+2q^3+q^4$.

\end{document}